\theoremstyle{plain}
\newtheorem{theorem}{Theorem}[section]
\newtheorem{definition}[theorem]{Definition}
\newtheorem{lemma}[theorem]{Lemma}
\newtheorem{corollary}[theorem]{Corollary}
\newtheorem{proposition}[theorem]{Proposition}
\newtheorem{hyp}[theorem]{Assumption}
\theoremstyle{remark}
\newtheorem{remark}[theorem]{Remark}
\newcommand{\ii}{\mathrm{i}}
\newcommand{\ud}{\mathrm{d}}
\def\R{{\mathbb R}}% real numbers
\def\N{{\mathbb N}}% nonnegative integers
\def\({\left(}
\def\){\right)}
\def\<{\left\langle}
\def\>{\right\rangle}
\def\1{{\mathbf 1}}
\def\d{{\partial}}
\def\eps{\varepsilon}
\DeclareMathOperator{\diver}{div}
\numberwithin{equation}{section}
\date\today
\title[Global weak solutions for magnetic NLS]{Global, finite energy, weak solutions for the
 NLS with rough, time-dependent magnetic  potentials}
\author[P.~Antonelli]{Paolo Antonelli}
\address[P.~Antonelli]{Gran Sasso Science Institute -- GSSI \\ via Crispi 7 \\ 67100 L'Aquila (Italy).}
\email{paolo.antonelli@gssi.it}
\author[A.~Michelangeli]{Alessandro Michelangeli}
\address[A.~Michelangeli]{International School for Advanced Studies -- SISSA \\ via Bonomea 265 \\ 34136 Trieste (Italy).}
\email{alemiche@sissa.it}
\author[R.~Scandone]{Raffaele Scandone}
\address[R.~Scandone]{International School for Advanced Studies -- SISSA \\ via Bonomea 265 \\ 34136 Trieste (Italy).}
\email{rscandone@sissa.it}
\thanks{Partially supported by the 2014-2017 MIUR-FIR 
grant ``\emph{Cond-Math: Condensed Matter and Mathematical Physics}'' code RBFR13WAET}
\begin{document}
\begin{abstract}
We prove the existence of weak solutions in the space of energy for a class of non-linear Schr\"odinger equations in the presence of a external, rough, time-dependent magnetic potential. Under our assumptions it is not possible to study the problem by means of usual arguments like resolvent techniques or Fourier integral operators, for example. We use a parabolic regularisation and we solve the approximating Cauchy problem. This is achieved by obtaining suitable smoothing estimates for the dissipative evolution. The total mass and energy bounds allow to extend the solution globally in time. We then infer sufficient compactness properties in order to produce a global-in-time finite energy weak solution to our original problem. 
%However, no uniqueness or continuous dependence on the initial data is given.
\\

\noindent Keywords: non-linear Schr\"odinger equation, magnetic potentials, parabolic regularisation, Strichartz estimates, weak solutions.\\

\noindent MSC: 35D40 - 35H30 - 35Q41 - 35Q55 - 35K08
\end{abstract}

\maketitle

%\tableofcontents

\section{Introduction and main result}

In this work we study the initial value problem associated with the non-linear Schr\"odinger equation with magnetic potential
\begin{equation}\label{eq:magneticNLS}
i\d_tu=-(\nabla-\ii\,A)^2u+\mathcal N(u)
\end{equation}
in the unknown $u\equiv u(t,x)$, $t\in\R$, $x\in\R^3$, where 
\begin{equation}\label{hart_pp_nl}
\mathcal N(u)=\lambda_1|u|^{\gamma-1}u+\lambda_2(|\cdot|^{-\alpha}\ast|u|^2)u,\qquad
\begin{array}{l}
\gamma\in(1,5]\,, \\
\alpha\in (0,3)\,, \\
\lambda_1,\lambda_2\geqslant 0
\end{array}
\end{equation}
is a defocusing non-linearity, both of local (pure power) and non-local (Hartree) type, and
$A:\R\times\R^3\rightarrow\R^3$ is the external time-dependent magnetic potential. (The cases $\alpha=0$ and $\gamma=1$ would make $\mathcal{N}(u)$ a trivial linear term.)

The novelty here will be the choice of $A$ within a considerably larger class of rough potentials than what customarily considered in the literature so far -- as a consequence, we will be in the condition to prove the existence of global-in-time weak solutions, without attacking for the moment the general issue of the global well-posedness.

Concerning the non-linearity, in the regimes $\gamma\in(1,5)$ and $\alpha\in(0,3)$ we say that $\mathcal{N}(u)$ it is \emph{energy sub-critical}, while for $\gamma=5$ is \emph{energy critical}.
%(see \cite{cazenave}) 
%\textcolor{blue}{serve davvero la citazione?}. 
Given the defocusing character of the equation, it will not be restrictive henceforth to set $\lambda_1=\lambda_2=1$, and in fact all our discussion applies also to the case when one of such couplings is set to zero.

The relevance of equation \eqref{eq:magneticNLS} is hard to underestimate, both for the interest it deserves per se, given the variety of techniques that have been developed for its study, and for the applications in various contexts in physics. Among the latter, \eqref{eq:magneticNLS} is the typical effective evolution equation for the quantum dynamics of an interacting Bose gas  subject to an external magnetic field, and as such it can be derived in suitable scaling limits of infinitely many particles \cite{am_GPlim,S-2008,Benedikter-Porta-Schlein-2015,AO-GP_magnetic_lapl-2016volume}: in this context the $|u|^{\gamma-1}u$ term with $\gamma=3$ (resp., $\gamma=5$) arises as the self-interaction term due to a two-body (resp., three-body) inter-particle interaction of short scale, whereas the $(|\cdot|^{-\alpha}\ast|u|^2)u$ term accounts for a two-body interaction of mean-field type, whence its non-local character. 
On the other hand \eqref{eq:magneticNLS} arises also as an effective equation for the dynamics of quantum plasmas. Indeed, for densely charged plasmas, the pressure term in the degenerate (i.e. zero-temperature) electron gas is effectively given by a non-linear function of the electron charge density \cite{Haas-plasmas-2011}, which in the wave-function dynamics corresponds to a power-type non-linearity (see for instance \cite{ADM-2017} for more details).

% 
% \textcolor{blue}{io questa esposizione della letteratura gi\`a esistente la organizzerei in modo diverso...parlando di approcci differenti per studiare il problema magnetico: prima la costruzione della soluzione fondamentale (nel caso di potenziali smooth) tipo Yajima, poi le tecniche di risolvente tipo Schlag oppure Fanelli, poi il paper di Ale. per\`o la decisione la lascio a voi, anche perch\'e voi conoscete la letteratura meglio di me.}

In the absence of an external field ($A\equiv 0$), equation \eqref{eq:magneticNLS} has been studied extensively, and global well-posedness and scattering are well understood, both in the critical and in the sub-critical case (\cite{cazenave,CKSTT-2008-energycrit,Miao-Hartree-2007,Dodson-2013,Ginibre-Velo-1984TS,Ginibre-Velo-1998Prov,Fang-Han-Zheng-NLS-2011}). Such results are mainly based upon (variants of the) perturbation theory with respect to the linear dynamics, built on Strichartz estimates for the free Schr\"{o}dinger propagator (\cite{Ginibre-Velo-CMP1992,Keel-Tao-endpoint-1998}). However when $A\equiv\!\!\!\!\!/\; 0$ the picture is much less developed. 

The main mathematical difficulty is to obtain suitable dispersive and smoothing estimates for the \emph{linear} magnetic evolution operator, in order to exploit a standard fixed point argument where the non-linearity is treated as a perturbation.

For \emph{smooth} magnetic potentials, local-in-time Strichartz estimates were established under suitable growth assumptions \cite{Yajima-magStri-91,Mizutani-2014-superquadratic}, based on the construction of the fundamental solution for the magnetic Schr\"{o}dinger flow by means of the method of parametrices and time slicing a la Fujiwara \cite{Fujiwara-JAM1979}, together with Kato's perturbation theory. If the potential has some Sobolev regularity and is sufficiently small, then  Strichartz-type estimates were obtained \cite{Stefanov-MagnStrich-2007} by studying the parametrix associated with the derivative Schr\"odinger equation $\d_tu-i\Delta u+A\cdot\nabla u=0$, exploiting the methods developed by Doi in \cite{Doi-1994,Doi-1996}.
Global well-posedness of \eqref{eq:magneticNLS} and stability results in the case of suitable smooth potentials are proved in \cite{DeBouard1991,naka-shimo-2005,Michel_remNLS_2008}.

%, Aiba-Yajima-2013

%Alternatively, if one assumes some suitable space decay for the potential, then it is possible to use resolvent analysis techniques in order to infer the dispesive estimates, see for example \cite{EGSchlag-2009}.

As far as \emph{non-smooth} magnetic potentials are concerned, magnetic Strichartz estimates are still available with a number of restrictions. When $A$ is time-independent, global-in-time magnetic Strichartz estimates were established by various authors under suitable spectral assumptions (absence of zero-energy resonances) on the magnetic Laplacian $A$ \cite{EGSchlag-2008,EGSchlag-2009,DAncona-Fanelli-StriSmoothMagn-2008}, or alternatively under suitable smallness of the so called non-trapping component of the magnetic field \cite{DAFVV-2010}, up to the critical scaling $|A(x)|\sim |x|^{-1}$. Counterexamples at criticality are also known  \cite{Fanelli-Garcia-2011}. In the time-dependent case, magnetic Strichartz estimates are available only under suitable smallness condition of $A$ \cite{Georgiev-Stefanov-Tarulli-2007,Stefanov-MagnStrich-2007}.

Beyond the regime of Strichartz-controllable magnetic fields very few is known, despite the extreme topicality of the problem in applications with potentials $A$ that are rough, have strong singularities locally in space, and have a very mild decay at spatial infinity, virtually a $L^\infty$-behaviour. This generic case can be actually covered, and global well-posedness for \eqref{eq:magneticNLS} was indeed established \cite{M-2015-nonStrichartzHartree}, by means of energy methods, as an alternative to the lack of magnetic Strichartz estimates. However, such an approach is only applicable to non-local non-linearities with energy sub-critical potential (in the notation of \eqref{eq:magneticNLS}: $\lambda_1=0$ and $\alpha\leqslant 2$), for it crucially relies on the fact that the non-linearity is then locally Lipschitz in the energy space, power-type non-linearities being instead way less regular and hence escaping this method. The same feature indeed allows to extend globally in time the well-posedness for the Maxwell-Schr\"odinger system in higher regularity spaces \cite{Nakamura-Wada-MaxwSchr_CMP2007}.

In this work we are concerned precisely with the generic case where in \eqref{eq:magneticNLS}  neither are the external magnetic fields Strichartz-controllable, nor can the non-linearity be handled with energy methods.

The key idea is then to work out first the global well-posedness of an initial value problem in which an additional source of smoothing for the solution is introduced, as the one provided by the magnetic Laplacian is not sufficient. In a recent work by the first author and collaborators \cite{ADM-2017}, placed in the closely related setting of non-linear Maxwell-Schr\"{o}dinger systems, the regularisation was provided by Yosida's approximation of the identity. Here, instead, we introduce a parabolic regularisation, in the same spirit of \cite{GuoNakStr-1995} for the Maxwell-Schr\"{o}dinger system. The net result is the addition of a heat kernel effect in the linear propagator, whence the desired smoothing.

At the removal of the regularisation by a compactness argument, we obtain one -- not necessarily unique -- global-in-time, weak solution with finite energy, which is going to be our main result (Theorem \ref{th:main} below). %\textcolor{blue}{frase cambiata, vi spiego a voce}

To be concrete, let us first state the conditions on the magnetic potential. 
%It will be convenient to work out two different sets of assumptions.
\begin{hyp}\label{hyp:assum1_alt}
The magnetic potential $A$ belongs to one of the two classes $\mathcal{A}_1$ or $\mathcal{A}_2$ defined by
\[
 \begin{split}
  \mathcal{A}_1\;&:=\;\widetilde{\mathcal{A}}_1\cap\mathcal{R} \\
  \mathcal{A}_2\;&:=\;\widetilde{\mathcal{A}}_2\cap\mathcal{R} \,,
 \end{split}
\]
where
\[
 \widetilde{\mathcal{A}}_1\;:=\;
\left\{
A=A(t,x)\left|\!
\begin{array}{c}
 \mathrm{div}_x A=0\;\textrm{ for a.e.}\;t\in\mathbb{R}, \\
 A=A_1+A_2\textrm{ such that, for $j\in\{1,2\}$,} \\
 A_j\in L^{a_j}_\mathrm{loc}(\mathbb{R},L^{b_j}(\mathbb{R}^3,\mathbb{R}^3))  \\
 %\partial_t A_j\in L^1_{\mathrm{loc}}(\mathbb{R},L^{b_j}(\mathbb{R}^3,\mathbb{R}^3)) \\
 a_j\in(4, +\infty],\quad b_j\in(3, 6),\quad \frac{2}{\,a_j}+\frac{3}{\,b_j}<1
\end{array}\!\!
\right.\right\}
\]
and
\[
 \widetilde{\mathcal{A}}_2\;:=\;
\left\{
A=A(t,x)\left|\!
\begin{array}{c}
 \mathrm{div}_x A=0\;\textrm{ for a.e.}\;t\in\mathbb{R}, \\
 A=A_1+A_2\textrm{ such that, for $j\in\{1,2\}$,} \\
 A_j\in L^{a_j}_\mathrm{loc}(\mathbb{R},W^{1, \frac{3b_j}{3+b_j}}(\mathbb{R}^3,\mathbb{R}^3))  \\
 %\partial_t A_j\in L^1_{\mathrm{loc}}(\mathbb{R},L^{b_j}(\mathbb{R}^3,\mathbb{R}^3)) \\
 a_j\in(2, +\infty],\quad b_j\in(3, +\infty],\quad \frac{2}{\,a_j}+\frac{3}{\,b_j}<1
\end{array}\!\!
\right.\right\}\,,
\]
and where
\[
 \mathcal{R}\;:=\;\big\{ A\in\widetilde{\mathcal{A}}_1\textrm{ or }A\in\widetilde{\mathcal{A}}_2\:|\: \partial_t A_j\in L^1_{\mathrm{loc}}(\mathbb{R},L^{b_j}(\mathbb{R}^3,\mathbb{R}^3)),\,j=1, 2\big\}\,.
\]

Associated to such classes, we define
\begin{equation*}
\begin{aligned}
\|A\|_{\mathcal A_1}:=\;&\|A_1\|_{L^{a_1}_tL^{b_1}_x}+\|A_2\|_{L^{a_2}_tL^{b_2}_x}\\
\|A\|_{\mathcal A_2}:=\;&\|A_1\|_{L^{a_1}_tW^{1, \frac{3b_1}{3+b_1}}_x}+\|A_2\|_{L^{a_2}_tW^{1, \frac{3b_2}{3+b_2}}_x}\,.
\end{aligned}
\end{equation*}
\end{hyp}

% 
% \begin{hyp}\label{hyp:assum2_alt}
% The magnetic potential $A$ belongs to one of the two classes $\mathcal B_1$ or $\mathcal B_2$, defined below:
% \begin{equation*}
% \begin{aligned}
% \mathcal B_1:=&\{A=A(t, x)\;\textrm{s.t.}\;\diver A=0\;\textrm{for a.e.} t\in\R,\,A=A_1+A_2,\,\\
% &\quad\textrm{where}\;A_j\in AC_{loc}(\R;L^{b_j}(\R^3)), j=1, 2,  b_j\in(3, 6)\}\\
% \mathcal B_2:=&\{A=A(t, x)\;\textrm{s.t.}\;\diver A=0\;\textrm{for a.e.} t\in\R,\,A=A_1+A_2,\,\\
% &\quad\textrm{where}\;A_j\in \textcolor{red}{AC_{loc}(\R;W^{1, \frac{3b_j}{3+b_j}}(\R^3))}, j=1, 2, b_j\in(3, \infty]\}.
% \end{aligned}
% \end{equation*}
% Associated to those classes, we shall also define the norms
% \begin{equation*}
% \begin{aligned}
% \|A\|_{\mathcal B_1}:=&\|A_1\|_{W^{1, 1}_tL^{b_1}_x}+\|A_2\|_{W^{1, 1}_tL^{b_2}_x}\\
% \|A\|_{\mathcal B_2}:=&\|A_1\|_{W^{1, 1}_tW^{1, \frac{3b_1}{3+b_1}}_x}+\|A_2\|_{W^{1, 1}_tW^{1, \frac{3b_2}{3+b_2}}_x}.
% \end{aligned}
% \end{equation*}
% \end{hyp}
% \textcolor{red}{vedere cosa serve davvero per l'energia}
% 
% 
% {\color{green}chiaramente in questo modo si dovrebbe cambiare la discussione sotto}

A few observations are in order. First and foremost, both classes $\mathcal{A}_1$ and $\mathcal{A}_2$ include magnetic potentials for which in general the validity of Strichartz estimates for the magnetic Laplacian is not known. 

A large part of our intermediate results, including in particular the local theory in the energy space, are found with magnetic potentials in the larger classes $\widetilde{\mathcal{A}}_1$ and $\widetilde{\mathcal{A}}_2$. The mild amount of regularity in time provided by the intersection with the class $\mathcal{R}$  is needed to infer suitable a priori bounds on the solution from the estimates on the total energy. This allows one to extend globally in time the solution to the regularised problem. 
%We shall see, indeed (see Assumptions \ref{hyp:assum6} and \ref{hyp:assum7} in Section \ref{sec:propagator}),  that in order to produce a local theory in the energy space it suffices to have an amount of spatial integrability. 

Regularity in time of the external potential is not needed either when equation \eqref{eq:magneticNLS} is studied in the \emph{mass sub-critical} regime, i.e., when $\gamma\in(1,\frac73)$ and $\alpha\in(0,2)$, and when $\max{\{b_1,b_2\}}\in (3,6)$. In this case we are able to work with the more general condition  $A\in\widetilde{\mathcal A}_1$. This is a customary fact in the context of Schr\"{o}dinger equations with time-dependent potentials, as well known since \cite{Yajima1987_existence_soll_SE} (compare Theorems \cite[Theorem 1.1]{Yajima1987_existence_soll_SE} and \cite[Theorem 1.4]{Yajima1987_existence_soll_SE} therein: $L^a$-integrability in time on the electric external potentials yields a $L^p$-theory in space, whereas additional $L^a$-integrability of the time derivative of the potential yields a $H^2$-theory in space). 
Our aim here of studying finite energy solutions to \eqref{eq:magneticNLS} thus requires some intermediate assumptions on the magnetic potential, determined by the class $\mathcal R$ above. See also Proposition 1.7 in \cite{Carles} where a similar issue is considered.

The additional requirement on $\nabla A$ present in the class $\mathcal{A}_2$ is taken to accommodate slower decay at infinity for $A$, way slower than the behaviour $|A(x)|\sim |x|^{-1}$ (and in fact even a $L^\infty$-behaviour) which, as mentioned before, is critical for the validity of magnetic Strichartz inequalities.

Last, it is worth remarking that the divergence-free condition, $\diver_x{A}=0$, is assumed merely for convenience: our entire analysis can be easily extended to the cases where $\diver_x{A}$ belongs to suitable Lebesgue spaces and consider it as a given (electrostatic) scalar potential.

Here is finally our main result. Clearly, there is no fundamental difference in studying solutions forward or backward in time, and as customary we shall only consider henceforth the problem for $t\geqslant 0$. Our entire discussion can be repeated for the case $t\leqslant 0$.

\begin{theorem}[Existence of global, finite energy weak solutions]\label{th:main}~

\noindent Let the magnetic potential $A$ be such that $A\in\mathcal A_1$ or $A\in\mathcal A_2$, and take $\gamma\in (1,5]$, $\alpha\in(0,3)$. Then, for every initial datum $f\in H^1(\R^3)$, the Cauchy problem
\begin{equation}\label{eq:CauMNLS}
\begin{split}
 & \begin{cases}
\;\;\ii\,\partial_t u\;=\; -(\nabla-\ii\,A)^2u+|u|^{\gamma-1}u+(|\cdot|^{-\alpha}*|u|^2)u \\
\:u(0,\cdot)=\;f
\end{cases} \\
& \quad t\in[0,+\infty),\;\; x\in\R^3
\end{split}
\end{equation}
admits a global weak $H^1$-solution
\begin{equation*}
u\;\in\; L_{\mathrm{loc}}^{\infty}([0,+\infty),H^1(\R^3))\cap W_{\mathrm{loc}}^{1,\infty}([0,+\infty),H^{-1}(\R^3))\,,
\end{equation*}
meaning that \eqref{eq:magneticNLS} is satisfied for a.e.~$t\in[0,+\infty)$ as an identity in $H^{-1}$ and $u(0,\cdot)=f$. Moreover, 
%the mass $(\mathcal{M}(u))(t):=\int_{\R^3}|u(t,x)|^2\,\ud x$ is conserved for any time  and 
the energy
\begin{equation*}
 \begin{split}
  \mathcal{E}(u)(t)\;&:=\;\int_{\R^3}\Big({\textstyle{\frac{1}{2}}}|(\nabla-\ii A(t))\,u|^2+{\textstyle\frac{1}{\gamma +1}}|u|^{\gamma +1}+{\textstyle{\frac{1}{4}}}(|x|^{-\alpha}*|u|^2)|u|^2\Big)\,\ud x
 \end{split}
\end{equation*}
is finite and bounded on compact intervals.
\end{theorem}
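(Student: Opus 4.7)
The plan is to introduce a parabolic regularisation of \eqref{eq:CauMNLS}, establish global well-posedness of the approximated problem, and recover a weak solution to the original problem by compactness. For each $\eps>0$ I would consider
\[
\ii\,\d_t u^\eps=-(\nabla-\ii A)^2u^\eps+\mathcal N(u^\eps)-\ii\eps\Delta u^\eps,\qquad u^\eps(0)=f_\eps,
\]
with $f_\eps\in H^2(\R^3)$ approximating $f$ in $H^1$. The linear part is governed by the heat-Schr\"odinger semigroup $e^{t[(\ii+\eps)\Delta]}$, whose kernel combines the oscillatory $L^p\!-\!L^{p'}$ decay of the free propagator with a genuine parabolic smoothing gaining derivatives at each fixed $\eps$. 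This yields enhanced Strichartz-type inequalities robust enough to absorb the magnetic first- and zeroth-order terms arising from the expansion of $-(\nabla-\ii A)^2$ as perturbations, via H\"older in the mixed norms $L^{a_j}_tL^{b_j}_x$ (respectively $L^{a_j}_tW^{1,3b_j/(3+b_j)}_x$) that define $\widetilde{\mathcal A}_1$ (respectively $\widetilde{\mathcal A}_2$). A Banach fixed point in a suitable Strichartz space then delivers local existence of $u^\eps$ in $H^1$.

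\textbf{Uniform a priori bounds and global existence.} Testing the equation against $\overline{u^\eps}$ and using $\diver A=0$, the $L^2$-mass of $u^\eps$ obeys a dissipative balance with loss $\eps\|\nabla u^\eps\|_2^2$, hence is uniformly controlled. Differentiating the magnetic energy $\mathcal E(u^\eps)(t)$ along the flow, the parabolic regularisation contributes again a dissipative term, while the explicit time dependence of the magnetic kinetic form leaves a residual involving $\d_t A$; this is precisely the term that the integrability built into the class $\mathcal R$ is designed to control. A Gr\"onwall argument then bounds $\mathcal E(u^\eps)$ uniformly in $\eps$ on every compact time interval. Combined with mass control and the defocusing sign of $\mathcal N$, this yields a uniform-in-$\eps$ bound for $u^\eps$ in $L^\infty_{\mathrm{loc}}([0,\infty),H^1(\R^3))$, and by the equation also in $W^{1,\infty}_{\mathrm{loc}}([0,\infty),H^{-1}(\R^3))$, which in particular makes $u^\eps$ globally defined.

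\textbf{Compactness and passage to the limit.} These bounds together with Aubin-Lions extract a subsequence $u^{\eps_n}$ converging to some $u$ weakly-$*$ in $L^\infty_{\mathrm{loc}}H^1$ and strongly in $L^2_{\mathrm{loc}}([0,T], L^2_{\mathrm{loc}}(\R^3))$, hence almost everywhere. The $H^1$-bound and interpolation with the Sobolev embedding $H^1\hookrightarrow L^6$ upgrade this into $L^q_{\mathrm{loc}}$-convergence for every $2\le q<6$: enough to pass to the limit in the power non-linearity in the sub-critical range and, together with Hardy-Littlewood-Sobolev, in the Hartree convolution. The magnetic linear terms converge in $\mathcal D'$ thanks to the spacetime integrability of $A$, and the parabolic term $\eps\Delta u^\eps$ vanishes in $\mathcal D'$ from the uniform $H^1$-bound. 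Lower semicontinuity preserves the finite-energy bound, and continuity in $H^{-1}$ ensures $u(0,\cdot)=f$.

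\textbf{Main obstacles.} The sharpest point is the energy-critical case $\gamma=5$: the interpolation above fails at $q=6$ and identifying the limit of $|u^\eps|^{4}u^\eps$ demands an equi-integrability or concentration-compactness argument typical of energy-critical problems. The other delicate step, singling out the class $\mathcal R$ from $\widetilde{\mathcal A}_1$ and $\widetilde{\mathcal A}_2$, is the uniform-in-$\eps$ energy estimate, where integrability of $\d_t A$ is precisely what permits closing the Gr\"onwall loop in the absence of magnetic Strichartz inequalities.
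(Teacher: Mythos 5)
Your overall strategy is the same as the paper's: introduce a parabolic regularisation, obtain local $H^1$ existence via heat--Schr\"odinger Strichartz estimates, prove uniform-in-$\eps$ mass/energy bounds (with $\partial_t A\in L^1_{\mathrm{loc}}L^{b_j}_x$, i.e.\ the class $\mathcal R$, precisely needed to close Gr\"onwall on the energy), and pass to the limit by compactness. Two issues, however, one minor and one more serious.

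\emph{Minor.} Your regularised equation
$\ii\,\partial_t u^\eps=-(\nabla-\ii A)^2 u^\eps+\mathcal N(u^\eps)-\ii\eps\Delta u^\eps$
has the wrong sign: multiplying by $-\ii$ gives $\partial_t u^\eps=\ii(\nabla-\ii A)^2u^\eps-\ii\mathcal N(u^\eps)-\eps\Delta u^\eps$, a \emph{backward} heat perturbation, which is ill-posed. You clearly intend $+\ii\eps\Delta u^\eps$, consistent with your later use of $e^{(\ii+\eps)t\Delta}$. Also, the paper regularises the full magnetic Laplacian, $-(1-\ii\eps)(\nabla-\ii A)^2u$, not just $\Delta$: this is what makes the mass identity $\frac{d}{dt}\mathcal M(u_\eps)=-2\eps\|(\nabla-\ii A)u_\eps\|_{L^2}^2$ and the dissipative term $-\eps\|(\nabla-\ii A)^2u_\eps\|_{L^2}^2$ in the energy balance come out cleanly, without commutators between $\Delta$ and $A\cdot\nabla$.

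\emph{The genuine gap: you misplace the difficulty in the critical case $\gamma=5$.} The passage to the limit at $\gamma=5$ does \emph{not} require concentration-compactness or equi-integrability. Aubin--Lions gives strong $L^M_tL^4_{\mathrm{loc},x}$ convergence (hence a.e.\ pointwise along a subsequence), and $|u_n|^4u_n$ is uniformly bounded in $L^\infty_tL^{6/5}_x$ by the $H^1$-bound and Sobolev; a.e.\ convergence plus uniform $L^p$-boundedness with $p>1$ identifies the weak-$*$ limit pointwise, with no loss at the endpoint. Where the critical case truly bites is \emph{global existence of the regularised solution} $u_\eps$. For $\gamma<5$ the $H^1$ a priori bound together with the $H^1$ blow-up alternative gives $T_{\max}=\infty$. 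For $\gamma=5$ the local theory only has the critical blow-up alternative $T_{\max}<\infty \Leftrightarrow \|u_\eps\|_{L^6([0,T_{\max}),L^{18})}=\infty$, and an $L^\infty_tH^1_x$ bound alone does \emph{not} rule this out. The paper closes this by squeezing extra information out of the parabolic dissipation: integrating the energy identity in time yields
\[
\int_0^T\!\!\int_{\R^3}|u_\eps|^4\big|\nabla|u_\eps|\big|^2\,\ud x\,\ud t\;\lesssim_{A,T}\;\eps^{-1}\,,
\]
which via $\||u_\eps|^3\|_{L^2_tL^6_x}^2\lesssim\int\!\!\int|\nabla|u_\eps|^3|^2$ and Sobolev gives $\|u_\eps\|_{L^6_tL^{18}_x}^6\lesssim_{A,T}\eps^{-1}<\infty$ for each fixed $\eps$, hence $T_{\max}=\infty$. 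This dissipation bound (Proposition \ref{pr:5ene}, estimate \eqref{dissiesti}, and its use in Theorem \ref{t:c}) is an essential ingredient that your proposal does not mention; without it the argument does not reach a global $u_\eps$ for $\gamma=5$, and there is nothing to pass to the limit in.

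Finally, because the time-regularity of $A$ is only local ($AC_{\mathrm{loc}}$), the energy bound is only uniform on compact intervals; the paper therefore produces a local weak solution on each $[0,N]$ and then glues via a diagonal subsequence. Your proposal implicitly absorbs this into ``$L^\infty_{\mathrm{loc}}$'' but should state the diagonal/gluing step explicitly for a complete argument.
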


In the remaining part of this Introduction, let us elaborate further on the general ideas behind our proof of Theorem \ref{th:main}.

%First of all, it is evident that there is no fundamental difference in studying solutions forward or backward  in time, and as customary we shall only consider henceforth the problem for $t\geqslant 0$. Our entire discussion can be repeated for the case $t\leqslant 0$.

As previously mentioned, we introduce a small dissipation term in the equation
\begin{equation}\label{eq:visc_nls}
 i\d_tu\;=\;-(1-\ii\,\eps)(\nabla-\ii\,A)^2u+\mathcal N(u)
\end{equation}
and we study the approximated problem.
Similar parabolic regularisation procedures are commonly used in PDEs, see for example the vanishing viscosity approximation in fluid dynamics or in systems of conservation laws, and in fact this was also exploited in a similar context by Guo-Nakamitsu-Strauss to study on the existence of finite energy weak solutions to the Maxwell-Schr\"odinger system \cite{GuoNakStr-1995}.

By exploiting the parabolic regularisation, we can now regard $i\d_tu+(1-\ii\,\eps)\Delta u$ as the main linear part in the equation and treat $(1-\ii\,\eps)(2\,\ii\,A\cdot\nabla u+|A|^2u)+\mathcal{N}(u)$ as a perturbation.

Evidently, this cannot be done in the purely Hamiltonian case $\eps=0$. Indeed, the term $A\cdot\nabla u$ is not a Kato perturbation of the free Laplacian and the whole derivative Schr\"odinger equation must be considered as the principal part \cite{Stefanov-MagnStrich-2007}. 
%In fact, the latter is a typical feature of hyperbolic equations, for which there is no strong smoothing effect as for parabolic equations -- a prototypical example is the use of fourth order parabolic regularisation for the KdV equation in order to overcome the loss of derivative in the nonlinear term \cite{Bona-Smith-kdv1975}. \textcolor{red}{da precisare un po' meglio, altrimenti sembra che diciamo cose inesatte...}

We can instead establish the local well-posedness in the energy space for the approximated Cauchy problem
\begin{equation}\label{eq:visc_CauMNLS}
\begin{split}
 & \begin{cases}
\:\ii\,\partial_t u\;=\; -(1-\ii\,\eps)(\nabla-\ii\,A)^2u+|u|^{\gamma-1}u+(|\cdot|^{-\alpha}*|u|^2)u \\
\:u(0,\cdot)\;=\;f 
\end{cases} \\
&\quad t\in[0,T]\,,\;\; x\in\R^3\,.
\end{split}
\end{equation}
We first obtain suitable Strichartz-type and smoothing estimates for the viscous magnetic evolution semi-group. This is done by exploiting the smoothing effect of the heat-Schr\"odinger semi-group $t\mapsto e^{(i+\varepsilon)t\Delta}$ and by inferring the same space-time bounds also for the viscous magnetic evolution, in a similar fashion as in \cite{Yajima-magStri-91,Naibo-Stefanov-MathAnn2006} scalar (electrostatic) potentials are treated as perturbations of the free Schr\"odinger evolution.

Next, the a priori bounds on the total mass and the total energy allow us to extend the solution of the regularised problem globally in time. It is worth stressing that such global well-posedness holds in the energy critical case too: indeed, when $\gamma=5$ the bounds deduced from the energy dissipation provide a uniform-in-time control on some Strichartz-type norms, and the argument is then completed by means of the blow-up alternative for the critical case.

The mass/energy a priori bounds turn out to be uniform in the regularising parameter $\eps>0$, which yields the needed compactness for the sequence of approximating solutions. It is then possible to remove the regularisation and to show the existence of a finite energy weak solution to our original problem \eqref{eq:CauMNLS}, at the obvious price of loosing the uniqueness, as well as its continuous dependence on the initial data.

The material is organized as follows: in Section \ref{sec:preliminaries} we collect the preliminary notions and results we need in our analysis. In particular, we clarify the notion of weak (and strong) $H^1$-solution and we derive suitable space-time estimates for the heat-Schr\"odinger evolution. In Section \ref{sec:propagator} we study the smoothing property of the magnetic linear Schr\"odinger equation with a parabolic regularisation. In Section \ref{sec:LWP} we prove local existence for the regularised magnetic non-linear Schr\"{o}dinger equation \eqref{eq:visc_nls}. In Section \ref{eq:mass-energy} we prove mass and energy estimates for \eqref{eq:visc_nls} together with certain a priori bounds. In Section \ref{eq:eps-GWP} we use the energy estimates and the a priori bounds to extend the solution (forward) globally in time, both in the energy sub-critical and critical case. In Section \ref{sec:remov-reg}, using a compactness argument, we remove the regularisation, eventually proving the main theorem.

\section{Preliminaries and notation}\label{sec:preliminaries}
In this Section we collect the definitions and main tools that we shall use in the rest of the work.

We begin with a few remarks on our notation. For two positive quantities $P$ and $Q$, we write $P\lesssim Q$ to mean that $P\leqslant CQ$ for some constant $C$ independent of the variables or of the parameters which $P$ and $Q$ depend on, unless explicitly declared; in the latter case we write, self-explanatorily, $P\lesssim_{\alpha} Q$, and the like. Given $p_1,\ldots p_n\in [1,+\infty]$, we define $p=p_1*p_2\ldots *p_n$ by
$$ \frac1p=\frac{1}{p_1}+\frac{1}{p_2}+\ldots +\frac{1}{p_n}\,.$$
The same operation can be extended component-wise to vectors in $[1,+\infty]^d$, and we still denote it by $*$. Thus, for example, $(s,p)=(s_1,p_1)*(s_2,p_2)$ will mean $s^{-1}=s_1^{-1}+s_2^{-1}$ and $p^{-1}=p_1^{-1}+p_2^{-1}$. Given $p\in[1,+\infty]$, we denote by $p'$ its H\"{o}lder dual exponent, defined by $p*p'=1$. Henceforth, we use the symbols $\diver$, $\nabla$ and $\Delta$ to denote derivations in the spatial variables only. When referring to the vector field $A:\mathbb{R}^3\to\mathbb{R}^3$, conditions like  $A\in L^p(\mathbb{R}^3)$ are to be understood as $A\in L^p(\mathbb{R}^3,\mathbb{R}^3)$. As customary, in a self-explanatory manner we  will frequently make only the dependence on $t$ explicit in symbols such as $A(t)$, $u(t)$, $\mathcal{N}(u(t))$, $(\nabla-\ii\,A(t))u$, etc., instead of writing $A(t,x)$, $u(t,x)$, $(\mathcal{N}(u))(t,x)$, $((\nabla-\ii\,A)u)(t,x)$, etc. The short-cut `NLS' refers as usual to non-linear Schr\"{o}dinger equation, in the sense that will be specified in the following. For sequences and convergence of sequences, we write $(u_n)_n$ and $u_n\to u$ for $(u_n)_{n\in\mathbb{N}}$ and $u_n\to u$ as $n\to+\infty$.

\subsection{Magnetic Laplacian and magnetic Sobolev space}

We clarify now the meaning of the symbol $(\nabla-\ii A(t))^2$. As mentioned already in the Introduction, formally 
$$(\nabla-\ii A(t))^2=\Delta-2\,\ii\,A(t)\cdot\nabla-i\diver{A(t)}-|A(t)|^2\,.$$ 
In our setting of divergence-free magnetic potentials, this becomes
$$(\nabla-\ii A(t))^2=\Delta-2\,\ii\,A(t)\cdot\nabla-|A(t)|^2.$$
If $A(t)\in L^2_{\mathrm{loc}}(\R^3)$ for almost every $t\in\R$, which will always be our case, then we define the \emph{magnetic Laplacian} $(\nabla-\ii A(t))^2$ as a (time-dependent) distributional operator, according to the following straightforward Lemma.
\begin{lemma}[Distributional meaning of the magnetic Laplacian]
Assume that, for almost every $t\in\R$, $A(t)\in L^2_{\mathrm{loc}}(\R^3)$ with $\diver{A}(t)=0$. Then for almost every $t\in\R$, $(\nabla-\ii A(t))^2$ is a map from $L^1_{\mathrm{loc}}(\R^3)$ to $\mathcal{D}'(\R^3)$, which acts on a generic $f\in L^1_{\mathrm{loc}}(\R^3)$ as
$$(\nabla-\ii A(t))^2f\;=\;\Delta f-2\,\ii\,A(t)\cdot\nabla f-|A(t)|^2f\,.$$
\end{lemma}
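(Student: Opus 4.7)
My plan is to verify the identity by a straightforward componentwise expansion of the formal expression $(\nabla-\ii A(t))^2$ and then to check, term by term, that the three pieces on the right-hand side make sense as elements of $\mathcal{D}'(\R^3)$ under the stated regularity of $A$ and $f$. The time variable plays no role in the argument and will be suppressed throughout.

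First I would carry out the computation on smooth representatives. For smooth $A$ and $f$ one writes
\[
(\nabla-\ii A)^2 f\;=\;\sum_{j=1}^3(\partial_j-\ii A_j)(\partial_j f-\ii A_j f)\;=\;\Delta f-\ii\sum_j\bigl[\partial_j(A_j f)+A_j\,\partial_j f\bigr]-|A|^2 f,
\]
and the two cross terms reorganize as $2\,A\cdot\nabla f+(\diver A)\,f$. The divergence-free hypothesis kills the last contribution, leaving precisely $\Delta f-2\ii\,A\cdot\nabla f-|A|^2 f$. This is the algebraic identity to be interpreted distributionally.

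To extend this calculation to rough data the plan is to check that each of the three pieces defines a distribution on its own. The Laplacian $\Delta f$ is the usual distributional Laplacian of $f\in L^1_{\mathrm{loc}}(\R^3)$, acting by $\langle\Delta f,\varphi\rangle=\int f\,\Delta\varphi\,\ud x$. For the magnetic drift term, the formal integration by parts $\langle A\cdot\nabla f,\varphi\rangle=-\int f\,A\cdot\nabla\varphi\,\ud x$ is legitimate because $\diver A=0$, and it reduces the question to pairing $f\in L^1_{\mathrm{loc}}$ against $A\cdot\nabla\varphi$, which is compactly supported and locally $L^2$. The zero-order piece $|A|^2 f$ is interpreted through the pointwise product, and is meaningful since $|A|^2\in L^1_{\mathrm{loc}}$. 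The main point I anticipate as needing care is the absolute convergence of these pairings for a genuinely arbitrary $f\in L^1_{\mathrm{loc}}$, since products of two $L^1_{\mathrm{loc}}$ functions are not automatically locally integrable; this causes no issue in the sequel because the lemma will be applied to $f\in H^1(\R^3)$, for which the Sobolev embedding $H^1\hookrightarrow L^6$ makes both pairings converge against test functions. The passage from the smooth identity to the general one is then immediate by mollifying $A$ and $f$ and taking distributional limits against a fixed test function, using the commutation of $\diver$ with convolution to preserve the divergence-free condition at every scale.
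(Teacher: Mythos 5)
The paper presents this lemma without proof, labeling it as straightforward, so there is no proof to compare against; your plan simply spells out what the authors took for granted. Your algebraic computation on smooth data is exactly right, and the divergence-free hypothesis does kill the $(\diver A)\,f$ term, as you say.

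You are also right to flag the gap in the statement as literally written: for a genuinely arbitrary $f\in L^1_{\mathrm{loc}}$ and $A\in L^2_{\mathrm{loc}}$ only, neither $A\cdot\nabla f$ (in its integrated-by-parts form, which pairs $f$ against $A\cdot\nabla\varphi\in L^2_{\mathrm{loc}}$) nor $|A|^2 f$ (where $|A|^2\in L^1_{\mathrm{loc}}$) is guaranteed to be locally integrable, since products of two $L^1_{\mathrm{loc}}$ (or $L^1_{\mathrm{loc}}\cdot L^2_{\mathrm{loc}}$) functions need not lie in $L^1_{\mathrm{loc}}$. So the map is only well defined on the subset of $L^1_{\mathrm{loc}}$ for which these products are locally integrable. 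However, your proposed patch --- appealing to $f\in H^1\hookrightarrow L^6$ --- is not quite sufficient on its own: with $A$ only in $L^2_{\mathrm{loc}}$ one has $|A|^2\in L^1_{\mathrm{loc}}$, and $L^1_{\mathrm{loc}}\cdot L^6_{\mathrm{loc}}\not\subset L^1_{\mathrm{loc}}$ (by H\"older one would land in $L^{6/7}$ locally). What actually saves the day in the paper is that the magnetic potentials belong to $\mathcal{A}_1$ or $\mathcal{A}_2$, where $A_j\in L^{b_j}$ with $b_j>3$, so $|A|^2\in L^{b_j/2}_{\mathrm{loc}}$ with $b_j/2>3/2>6/5$, and then $|A|^2 f\in L^1_{\mathrm{loc}}$ by H\"older against $f\in L^6_{\mathrm{loc}}$. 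In other words, you should cite the extra integrability of $A$, not just the improved integrability of $f$. With that correction, your mollification argument and the term-by-term interpretation are fine.
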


In order to qualify such a distribution as an element of a suitable functional space, it is natural to deal with the magnetic Sobolev space defined as follows. (Here, with respect to our general setting, $A$ is meant to be a magnetic vector potential at a fixed time.)

\begin{definition}\label{de:magn_space}
Let $A\in L_{\mathrm{loc}}^2(\R^3)$. We define \emph{magnetic Sobolev space}
$$H_A^{1}(\R^3):=\{f\in L^2(\R^3)\,|\,(\nabla-\ii\,A)f\in L^2(\R^d)\}$$ 
equipped with the norm
$$\Vert f\Vert_{H_A^{1}(\R^3)}^2\;:=\;\Vert f\Vert_{L^2(\R^d)}^2+\Vert(\nabla-\ii\,A)f\Vert_{L^2(\R^d)}^2\,,$$
which makes $H_A^{1}(\R^3)$ a Banach space.
\end{definition}

We recall \cite[Theorem 7.21]{Lieb-Loss-Analysis} that, when $A\in L_{\mathrm{loc}}^2(\R^3)$, any $f\in H^1_A(\R^3)$ satisfies the \emph{diamagnetic inequality}
\begin{equation}\label{eq:diam}
|(\nabla |f|)(x)|\leq|((\nabla-\ii\,A)f)(x)|\quad\mbox{for a.e. }x\in\R^3\,.
\end{equation}

The following two Lemmas express useful magnetic estimates in our regime for $A$.

\begin{lemma}
Assume that $A\in\mathcal{A}_1$ or $A\in\mathcal{A}_2$. Then, for almost every $t\in\R$, 
\begin{equation}\label{H1H-1}
\Vert 2\,\ii\, A(t)\cdot\nabla f+|A(t)|^2f\Vert_{H^{-1}(\R^3)}\;\lesssim\; C_A(t)\Vert f\Vert_{H^1(\R^3)}\,,
\end{equation}
where
\begin{equation*}
C_A(t)\;:=\;1+\Vert A_1(t)\Vert^2_{L^{b_1}(\R^3)}+\Vert A_2(t)\Vert^2_{L^{b_2}(\R^3)}\,.
\end{equation*}
In particular, for almost every $t\in\R$, $(\nabla-\ii A(t))^2$ is a continuous map from $H^1(\R^3)$ to $H^{-1}(\R^3)$. 
\end{lemma}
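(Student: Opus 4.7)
The plan is to prove the $H^{-1}$ bound by duality against $H^1$ test functions, reducing everything to Hölder's inequality combined with the three-dimensional Sobolev embedding $H^1(\mathbb{R}^3)\hookrightarrow L^q(\mathbb{R}^3)$, valid for $q\in[2,6]$. First, I would verify that under either assumption on $A$, we control $A_j(t)$ in $L^{b_j}_x$ for a.e.\ $t$: for $A\in\mathcal{A}_1$ this is built into the definition, while for $A\in\mathcal{A}_2$ it follows from the Sobolev embedding $W^{1,\frac{3b_j}{3+b_j}}(\mathbb{R}^3)\hookrightarrow L^{b_j}(\mathbb{R}^3)$ (when $b_j<\infty$), since the exponents match via $\frac{1}{3b_j/(3+b_j)}-\frac{1}{3}=\frac{1}{b_j}$. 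So in either case the norm appearing in $C_A(t)$ is finite a.e.

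Next, for each fixed $t$ at which $A_j(t)\in L^{b_j}$, I would estimate the two types of terms separately. For the first-order term, pick a test function $\varphi\in H^1$ and write $\langle A_j(t)\cdot\nabla f,\varphi\rangle=\int A_j(t)\cdot\nabla f\,\overline{\varphi}\,dx$. By Hölder with exponents $(b_j,2,q_j)$, where $\frac{1}{q_j}=\frac{1}{2}-\frac{1}{b_j}$, this is bounded by $\|A_j(t)\|_{L^{b_j}}\|\nabla f\|_{L^2}\|\varphi\|_{L^{q_j}}$. The constraint $b_j>3$ ensures $q_j\in[2,6)$, so Sobolev embedding gives $\|\varphi\|_{L^{q_j}}\lesssim\|\varphi\|_{H^1}$. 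For the zeroth-order term, I expand $|A|^2=|A_1|^2+2A_1\cdot A_2+|A_2|^2$ and bound each piece by duality: for instance $|\langle|A_j|^2 f,\varphi\rangle|\leq\|A_j(t)\|_{L^{b_j}}^2\|f\|_{L^{r_j}}\|\varphi\|_{L^{r_j}}$ with $\frac{2}{r_j}=1-\frac{2}{b_j}$, giving $r_j\in(3,6)$ when $b_j\in(3,\infty)$ (and $r_j\in[2,3]$ in the remaining ranges of $\mathcal{A}_2$), in any event inside the Sobolev range. The cross term $A_1\cdot A_2 f$ is handled by a four-factor Hölder with exponents $(b_1,b_2,s,s)$ where $s=2(1-\frac{1}{b_1}-\frac{1}{b_2})^{-1}$; the condition $b_1,b_2>3$ forces $s\leq 6$, and then $\|A_1\|_{b_1}\|A_2\|_{b_2}\leq\frac{1}{2}(\|A_1\|_{b_1}^2+\|A_2\|_{b_2}^2)\leq C_A(t)$.

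Summing these contributions yields $\|2\ii A(t)\cdot\nabla f+|A(t)|^2 f\|_{H^{-1}}\lesssim C_A(t)\|f\|_{H^1}$, where the $1$ in the definition of $C_A$ absorbs the mismatch (the first-order piece contains only a linear factor of $\|A_j\|_{b_j}$, which is controlled by $1+\|A_j\|_{b_j}^2$ via AM-GM). The continuity statement then follows immediately by writing $(\nabla-\ii A(t))^2=\Delta-(2\ii A(t)\cdot\nabla+|A(t)|^2)$ and noting that $\Delta:H^1\to H^{-1}$ is bounded with norm $1$.

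The only delicate point I expect is making sure the Hölder/Sobolev exponents live in the admissible range throughout: the hypothesis $b_j>3$ is precisely what keeps all dual exponents no larger than $6$, and thus inside the Sobolev window in dimension three. The endpoint $b_j=\infty$ in class $\mathcal{A}_2$ requires a small additional remark, since there $L^{b_j}$ is not directly furnished by Sobolev embedding; in that case one replaces $\|A_j(t)\|_{L^{\infty}}$ by a slightly smaller $L^{q}$-norm with $q$ large but finite (still bounded by the $W^{1,3}$-norm), which is the only minor adjustment needed for $C_A(t)$ to be correctly interpreted.
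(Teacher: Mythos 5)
Your proof is correct and uses exactly the approach the paper invokes --- a duality argument via H\"older's inequality together with the Sobolev embedding $H^1(\R^3)\hookrightarrow L^q(\R^3)$ for $q\in[2,6]$; the paper's own proof is just a one-line pointer to "Sobolev's embedding and H\"older's inequality," so your write-up is simply a detailed version of the same thing. Your observation about the endpoint $b_j=\infty$ in class $\mathcal{A}_2$ (where $W^{1,3}(\R^3)\not\hookrightarrow L^\infty(\R^3)$, so the norm appearing in $C_A(t)$ is not literally controlled by the $W^{1,3}$-data) is a valid and slightly sharper remark than what the paper records; the fix you suggest --- replacing the $L^\infty$-norm by a large finite $L^q$-norm, which $W^{1,3}$ does control and which still lands the dual exponents inside $[2,6]$ --- is the right one.
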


\begin{proof}
The proof is based on a straightforward application of Sobolev's embedding and H\"older's inequality.
%Let us write $A(t)=A_1(t)+A_2(t)$, where $A_1(t)\in L^{b_1}(\R^3)$, $A_2(t)\in L^{b_2}(\R^3)$, with $b_1,b_2>3$. Given $f\in H^1(\R^3)$, Sobolev's embedding and H\"older's inequality imply
%\begin{equation}\label{H12}
%\begin{aligned}
%\|A(t)\cdot\nabla f\|_{H^{-1}(\R^3)}&\;\lesssim\; \|A_1(t)\cdot\nabla f\|_{L^{\frac{2b_1}{2+b_1}}(\R^3)}+\|A_2(t)\cdot\nabla f\|_{L^{\frac{2b_2}{2+b_2}}(\R^3)}\\
%&\;\lesssim\; (\Vert A_1(t)\Vert_{L^{b_1}(\R^3)}+\Vert A_2(t)\Vert_{L^{b_2}(\R^3)})\|\nabla f\|_{L^2(\R^3)}\,.
%\end{aligned}
%\end{equation}
%For the term $\|A(t)^2f\|_{H^{-1}(\R^3)}$, we observe that there surely are $M_1,M_2,M_3\in[2,6]$ such that
%$$p_1:={\textstyle\frac{b_1}{2}}*M_1\in[{\textstyle \frac{6}{5}},2],\quad p_2:={\textstyle\frac{b_2}{2}}*M_2\in[{\textstyle \frac{6}{5}},2],\quad p_3:=b_1*b_2*M_3\in[{\textstyle \frac{6}{5}},2]\,,$$
%whence, by Sobolev's embedding and H\"older's inequality,
%\begin{equation}\label{H13}
%\begin{aligned}
%\|A(t)^2 f\|_{H^{-1}(\R^3)}&\;\lesssim\; \|A_1(t)^2 f\|_{L^{p_1}(\R^3)}+\|A_2(t)^2 f\|_{L^{p_2}(\R^3)}\\
%&\qquad+\|A_1(t)\cdot A_2(t)f\|_{L^{p_3}(\R^3)}\\
%&\;\lesssim\; \|A_1\|^2_{L^{b_1}(\R^3)}\|f\|_{L^{M_1}(\R^3)}+\|A_2\|^2_{L^{b_2}(\R^3)}\|f\|_{L^{M_2}(\R^3)}\\
%&\qquad+\|A_1\|_{L^{b_1}(\R^3)}\|A_2\|_{L^{b_1}(\R^3)}\|f\|_{L^{M_3}(\R^3)}\\
%&\;\lesssim\; \big(\Vert A_1(t)\Vert^2_{L^{b_1}(\R^3)}+\Vert A_2(t)\Vert^2_{L^{b_2}(\R^3)}\big)\Vert f\Vert_{H^1(\R^3)}\,.
%\end{aligned}
%\end{equation}
%Combining \eqref{H12} and \eqref{H13} we deduce \eqref{H1H-1}.
\end{proof}

\begin{lemma}\label{le:norm_equiv}
Let $A\in L^b(\R^3)$ with $b\in[3,+\infty]$.
\begin{itemize}
\item[(i)] One has
\begin{equation}\label{HAem}
\|f\|_{L^q(\R^3)}\lesssim \|f\|_{H_A^{1}(\R^3)}\,,\quad q\in[2,6]
\end{equation}
with the constant in \eqref{HAem} independent of $A$, hence the embedding $H_A^{1}(\R^3)\hookrightarrow L^q(\R^3)$ for $q\in[2,6]$.
\item [(ii)] One has
\begin{equation}\label{eq:norm_equiv}
(1+\|A\|_{L^b(\R^3)})^{-1}\|f\|_{H^{1}(\R^3)}\;\lesssim\; \|f\|_{H_A^{1}(\R^3)}\;\lesssim\; (1+\|A\|_{L^b(\R^3)})\|f\|_{H^{1}(\R^3)}\,,
\end{equation}
whence $H_A^{1}(\R^3)\cong H^{1}(\R^3)$ as an isomorphism between Banach spaces
\end{itemize}
\end{lemma}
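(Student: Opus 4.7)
My plan is to prove (i) first via the diamagnetic inequality and standard Sobolev embedding, then use (i) to bootstrap the lower bound in (ii), while the upper bound in (ii) follows by a direct triangle inequality plus Hölder estimate.

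For (i), the key observation is that the diamagnetic inequality \eqref{eq:diam} gives $|\nabla|f|(x)|\le |(\nabla-\ii A)f(x)|$ a.e., so $\||f|\|_{H^1(\R^3)}\le \|f\|_{H_A^1(\R^3)}$ (using also $\||f|\|_{L^2}=\|f\|_{L^2}$). Since $\||f|\|_{L^q}=\|f\|_{L^q}$ and the ordinary Sobolev embedding yields $\||f|\|_{L^q(\R^3)}\lesssim \||f|\|_{H^1(\R^3)}$ for $q\in[2,6]$, this chain proves (i) with a constant depending only on the Sobolev embedding (hence independent of $A$).

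For the upper bound in (ii), write $(\nabla-\ii A)f=\nabla f-\ii Af$ and estimate
\[
\|(\nabla-\ii A)f\|_{L^2}\;\le\;\|\nabla f\|_{L^2}+\|A\|_{L^b(\R^3)}\|f\|_{L^{q}(\R^3)},
\]
where $q=\tfrac{2b}{b-2}$ is the Hölder conjugate exponent. A quick check shows that $q$ ranges in $[2,6]$ as $b$ ranges in $[3,+\infty]$, so the ordinary Sobolev embedding gives $\|f\|_{L^q}\lesssim \|f\|_{H^1}$. Adding the trivial bound $\|f\|_{L^2}\le\|f\|_{H^1}$ yields $\|f\|_{H_A^1}\lesssim(1+\|A\|_{L^b})\|f\|_{H^1}$.

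For the lower bound, by the triangle inequality $\|\nabla f\|_{L^2}\le \|(\nabla-\ii A)f\|_{L^2}+\|A\|_{L^b}\|f\|_{L^q}$ with the same $q=\tfrac{2b}{b-2}\in[2,6]$. Here the temptation would be to use Sobolev again, but that introduces $\|f\|_{H^1}$ on the right and gives only a circular estimate. Instead I use part (i): $\|f\|_{L^q}\lesssim \|f\|_{H_A^1}$, so
\[
\|\nabla f\|_{L^2}\;\lesssim\;\|f\|_{H_A^1}+\|A\|_{L^b}\|f\|_{H_A^1}\;\lesssim\;(1+\|A\|_{L^b})\|f\|_{H_A^1},
\]
and combined with $\|f\|_{L^2}\le\|f\|_{H_A^1}$ this delivers the lower bound. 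The two bounds together show that the identity map $H_A^1\to H^1$ is a bicontinuous isomorphism of Banach spaces. The only subtle point, which motivates the order of the two parts, is precisely the need to close the lower bound in (ii) via the $A$-independent embedding of (i), avoiding the circularity one would get from invoking the standard Sobolev inequality directly.
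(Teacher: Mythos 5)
Your proof is correct and uses exactly the tools the paper cites for this lemma (diamagnetic inequality, Sobolev embedding, H\"older's inequality), so it is essentially the paper's intended argument. Your observation that the lower bound in (ii) must be closed via the $A$-independent embedding of (i) rather than a naive appeal to Sobolev — which would be circular — is precisely the non-trivial point, and you handle it correctly.
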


\begin{proof}
The proof is based on a straightforward application of Sobolev's embedding, H\"older's inequality, and the diamagnetic inequality.
\end{proof}

\begin{remark}
As an immediate consequence of Lemma \ref{le:norm_equiv}, given a potential $A\in\widetilde{\mathcal{A}}_1$ or $A\in \widetilde{\mathcal{A}}_2$, for almost every $t>0$ the magnetic Sobolev spaces $H_{A(t)}^1(\R^3)$ are all equivalent to the ordinary Sobolev space $H^1(\R^3)$.
\end{remark}

%\begin{proof}
%Suppose first that $f\in H^{1}(\R^3)$. By Sobolev's embedding, $f\in L^q(\R^3)$ for $q\in[2,6]$, and in particular for $q\in[2,\frac{2b}{b-2}]$, since $b\geqslant 3$. Then
%\begin{align*}
%\|(\nabla-\ii\,A)f\|_{L^2(\R^3)}&\;\leqslant\;\Vert\nabla f\Vert_{L^2(\R^3)}+\Vert Af\Vert_{L^2(\R^3)}\\
%&\;\leqslant\;\Vert\nabla f\Vert_{L^2(\R^3)}+\|A\|_{L^b(\R^3)}\|f\|_{L^{\frac{2b}{b-2}}(\R^3)}\\
%&\;\lesssim\; (1+\|A\|_{L^b(\R^3)})\|f\|_{H^{1}(\R^3)}\,,
%\end{align*}
%whence $f\in H_A^{1}(\R^3)$, and
%\begin{equation}\label{pan1}
%\|f\|_{H_A^{1}(\R^3)}\;\lesssim\; (1+\|A\|_{L^b(\R^3)})\|f\|_{H^{1}(\R^3)}\,.
%\end{equation}
%Conversely, suppose now that $f\in H_A^{1}(\R^3)$. By means of the diamagnetic inequality and Sobolev's embedding,
%\begin{equation}\label{eq:pan}
%\|f\|_{L^q(\R^3)}\;\lesssim\;\Vert|f|\Vert_{H^{1}(\R^3)}\;\leqslant\;\Vert f\Vert_{H_A^{1}(\R^3)}\,,\qquad q\in[2,6]\,,
%\end{equation}
%which proves \eqref{HAem}. Moreover, using \eqref{eq:pan},
%\begin{align*}
%\|\nabla f\|_{L^2(\R^3)}&\;\leqslant\;\Vert(\nabla-\ii\,A) f\Vert_{L^2(\R^3)}+\Vert Af\Vert_{L^2(\R^3)}\\
%&\;\leqslant\;\Vert(\nabla -\ii\,A)f\Vert_{L^2(\R^3)}+\|A\|_{L^b(\R^3)}\|f\|_{L^{\frac{2b}{b-2}}(\R^3)}\\
%&\;\lesssim\; (1+\|A\|_{L^b(\R^3)})\|f\|_{H_A^{1}(\R^3)}\,.
%\end{align*}
%It follows that $f\in H^{1}(\R^3)$ and
%\begin{equation}\label{pan2}
%\|f\|_{H^{1}(\R^3)}\;\lesssim\; (1+\|A\|_{L^b(\R^3)})\|f\|_{H_A^{1}(\R^3)}\,.
%\end{equation}
%Using \eqref{pan1} and \eqref{pan2} we deduce the norm equivalence \eqref{eq:norm_equiv}.
%\end{proof}

\subsection{Notion of solutions}

We give now the precise notion of strong and weak solutions for the Cauchy problem \eqref{eq:CauMNLS} and its regularised version \eqref{eq:visc_CauMNLS}.

For the sake of a comprehensive discussion, let us consider the general Cauchy problem
\begin{equation}\label{eq:ge_cau}
\begin{split}
&\begin{cases}
\;\;\ii\,\partial_t u\;=\;c\,(\Delta u-2\,\ii\,A(t)\cdot\nabla u-|A(t)|^2u)+\mathcal{N}(u) \\
u(0,\cdot)\:=\;f
\end{cases} \\
&\qquad t\in I:=[0,T)\,,\;\;  x\in\mathbb{R}^3\,,
\end{split}
\end{equation}
for some $T>0$ and  $c\in\mathbb C$ with $\mathfrak{Im}\,c\geqslant 0$. Here the choices $c=-1$ and $c=-1+i\eps$ correspond, respectively, to \eqref{eq:CauMNLS} and \eqref{eq:visc_CauMNLS}.

\begin{definition}\label{de:ws_solution_alt}
Let $I:=[0,T)$ for some $T>0$. Given an initial datum $f\in H^1(\R^3)$, we say that
\begin{itemize}
\item[(i)] a local strong $H^1$-solution $u$ to \eqref{eq:ge_cau} on $I$ is a function 
\[
 u\;\in\;\mathcal{C}(I,H^1(\R^3))\cap\mathcal C^1(I; H^{-1}(\R^3))
\]
such that $\ii\,\partial_t u=c\,(\Delta u-2\,\ii\,A(t)\cdot\nabla u-|A(t)|^2u)+\mathcal{N}(u)$ in $H^{-1}(\mathbb{R}^3)$ for all $t\in I$ and $u(0)=f$; 
\item[(ii)]  a local weak $H^1$-solution $u$ to \eqref{eq:ge_cau} on $I$ is a function 
\[
 u\;\in\;L^\infty(I,H^1(\R^3))\cap W^{1, \infty}(I; H^{-1}(\R^3))
\]
such that $\ii\,\partial_t u=c\,(\Delta u-2\,\ii\,A(t)\cdot\nabla u-|A(t)|^2u)+\mathcal{N}(u)$ in $H^{-1}(\mathbb{R}^3)$
for a.e.~$t\in I$ and $u(0)=f$.
\end{itemize}
Moreover, a function $u\in L^\infty_{\mathrm{loc}}([0,+\infty),H^1(\mathbb{R}^3))$ is called
\begin{itemize}
 \item[(iii)] a global   strong $H^1$-solution $u$ to \eqref{eq:ge_cau} if it is a local strong solution for every interval $I=[0,T)$;
 \item[(iv)] a global weak $H^1$-solution $u$ to \eqref{eq:ge_cau} if it is a local weak solution for every interval $I=[0,T)$.
\end{itemize}

\end{definition}

Next, we recall the notion of local and global well-posedness (\cite[Section 3.1]{cazenave}).

\begin{definition}\label{de:WP}
We say that equation
\[
 \ii\,\partial_t u\;=\;c\,(\Delta u-2\,\ii\,A(t)\cdot\nabla u-|A(t)|^2u)+\mathcal{N}(u)
\]
is locally well-posed in $H^1(\R^3)$ if the following conditions hold:
\begin{itemize}
\item[(i)] For any initial datum $f\in H^1(\R^3)$, the Cauchy problem \eqref{eq:ge_cau} admits a unique local strong $H^1$-solution, defined on a maximal interval $[0,T_{\mathrm{max}})$, with $T_{\mathrm{max}}=T_{\mathrm{max}}(f)\in(0,+\infty]$.
\item[(ii)] One has continuous dependence on the initial data, i.e., if $f_n\rightarrow f$ in $H^1(\R^3)$ and $0\ni I\subset[0,T_{\mathrm{max}})$ is a closed interval, then the maximal strong $H^1$-solution of \eqref{eq:ge_cau} with initial datum $f_n$  is defined on $I$ for $n$ large enough and satisfies $u_n\rightarrow u$ in $\mathcal{C}(I,H^1(\R^3))$.
\item[(iii)] In the energy-sub-critical case one has the blow-up alternative: if $T_{\mathrm{max}}<+\infty$, then  
$$\lim_{t\uparrow T_{\mathrm{max}}}\Vert u(t,\cdot)\Vert_{H^1(\R^3)}\;=\;+\infty\,.$$
\end{itemize}
We say that the same equation  is globally well-posed in $H^1(\R^3)$  if it is locally well-posed and if for any initial datum $f\in H^1(\R^3)$ the Cauchy problem \eqref{eq:ge_cau} admits a global strong $H^1$-solution.
\end{definition}

\subsection{Smoothing estimates for the heat-Schr\"odinger flow}\label{sec:smoothingestHSflow}

Let us now analyse the smoothing properties of the heat and the Schr\"odinger flows generated by the free Laplacian.

We begin by recalling the well-known dispersive estimates for the Schr\"odinger equation
\begin{equation}\label{disps}
\|e^{\ii t\Delta}f\|_{L^p(\R^3)}\;\lesssim\;|t|^{-\frac32\left(\frac{1}{p'}-\frac1p\right)}\|f\|_{L^{p'}(\R^3)}\,, \quad p\in[2,+\infty]\,,\;\;t\neq 0\,,
\end{equation}
and the $L^p-L^r$ estimates for the heat flow
\begin{eqnarray}
 \|e^{t\Delta}f\|_{L^r(\R^3)}\!\!\!&\lesssim& \!\!t^{-\frac{3}{2}\left(\frac{1}{p}-\frac1r\right)}\|f\|_{L^{p}(\R^3)}  \label{disph}\\
 & & \qquad\qquad\qquad\qquad\qquad 1\leqslant p\leqslant r\leqslant +\infty,\,t>0\,. \nonumber\\
 \|\nabla e^{t\Delta}f\|_{L^r(\R^3)}\!\!\!&\lesssim& \!\!t^{-\frac{3}{2}\left(\frac{1}{p}-\frac1r\right)-\frac12}\|f\|_{L^{p}(\R^3)} \label{disphg}
\end{eqnarray}

We also recall the definition of admissible pairs for the Schr\"odinger flow in three dimensions.

\begin{definition}\label{destro}
A pair $(q,r)$ is called admissible if
\begin{equation*}
\frac2q+\frac{3}{r}=\frac{3}{2},\qquad r\in[2,6]\,.
\end{equation*}
The pair $(2,6)$ is called endpoint, while the others are called non-endpoint. The pair $(s,p)$ is called dual-admissible if $(s,p)=(q',r')$ for some admissible pair $(q,r)$, namely
\begin{equation*}
\frac2s+\frac{3}{p}=\frac{7}{2},\qquad p\in[{\textstyle{\frac{6}{5}}},2]\,.
\end{equation*}
\end{definition}

The dispersive estimate \eqref{disps}  yields a whole class of space-time estimates for the Schr\"odinger flow \cite{Ginibre-Velo-1998Prov, Yajima1987_existence_soll_SE, Keel-Tao-endpoint-1998}.

\begin{proposition}[Strichartz estimates]\label{freestr}~
\begin{itemize}
\item[(i)] For any admissible pair $(q,r)$, the following homogeneous estimate holds:
\begin{equation}\label{eq:hofree}
\|e^{\ii t\Delta}f\|_{L^q(\R;L^r(\R^3))}\;\lesssim\;\|f\|_{L^2(\R^3)}\,.
\end{equation}
\item[(ii)] Let $I$ be an interval of $\R$ (bounded or not), and $\tau,t\in \overline{I}$. For any admissible pair $(q,r)$ and any dual admissible pair $(s,p)$, the following inhomogeneous estimate holds:
\begin{equation}\label{eq:rifree}
\Big\|\int_{\tau}^t e^{\ii(t-\sigma)\Delta}F(\sigma)\,\,\ud\sigma\Big\|_{L^q(I;L^r(\R^3))}\;\lesssim\;\|F\|_{L^s(I;L^p(\R^3))}\,.
\end{equation}
\end{itemize}
\end{proposition}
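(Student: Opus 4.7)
The plan is to derive both estimates by the standard $TT^*$ argument applied to the operator $T:L^2(\R^3)\to L^q(\R; L^r(\R^3))$ defined by $Tf=e^{\ii t\Delta}f$, taking the dispersive bound \eqref{disps} and the unitarity of $e^{\ii t\Delta}$ on $L^2$ as the sole inputs. Estimate (i) is equivalent to the boundedness of $T$, and by duality to the boundedness of
\[
TT^{*}F(t)\;=\;\int_{\R}e^{\ii(t-\sigma)\Delta}F(\sigma)\,\ud\sigma
\]
from $L^{q'}(\R;L^{r'}(\R^3))$ to $L^{q}(\R;L^r(\R^3))$. By \eqref{disps},
\[
\|e^{\ii(t-\sigma)\Delta}F(\sigma)\|_{L^r(\R^3)}\;\lesssim\;|t-\sigma|^{-\frac{3}{2}(\frac{1}{r'}-\frac{1}{r})}\|F(\sigma)\|_{L^{r'}(\R^3)},
\]
and the admissibility relation $\tfrac{2}{q}+\tfrac{3}{r}=\tfrac{3}{2}$ gives exactly the exponent $\tfrac{3}{2}(\tfrac{1}{r'}-\tfrac{1}{r})=1-\tfrac{2}{q}$. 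For every non-endpoint pair this exponent lies strictly between $0$ and $1$, so the Hardy--Littlewood--Sobolev inequality in the time variable yields the bound $\|TT^{*}F\|_{L^q_tL^r_x}\lesssim \|F\|_{L^{q'}_tL^{r'}_x}$, from which (i) follows.

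The main obstacle is the endpoint $(q,r)=(2,6)$ in $d=3$, where the HLS exponent becomes critical and the scalar convolution argument collapses. Here one invokes the Keel--Tao bilinear argument: dyadically decompose the time-difference $|t-\sigma|\sim 2^k$, interpolate the dispersive bound \eqref{disps} against the unitary $L^2\to L^2$ bound to obtain off-diagonal bilinear estimates with two independent exponents, and then use a real-interpolation lemma (the atomic decomposition of Lorentz spaces) to sum the dyadic pieces. This is precisely the step that forces $r\le 6$ in three dimensions and is the only non-routine ingredient.

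For (ii), the diagonal case (with $(s,p)=(q',r')$) is just the $TT^{*}$ bound above applied to the untruncated convolution on $\R$, together with the homogeneous estimate \eqref{eq:hofree} to handle the $L^q_tL^r_x$ output and the $T^{*}$ bound $\|\int e^{-\ii\sigma\Delta}F(\sigma)\,\ud\sigma\|_{L^2}\lesssim\|F\|_{L^{s}_tL^{p}_x}$ for the $L^2\to L^q_tL^r_x$ side; combining the two gives the full range of admissible$\,\times\,$dual-admissible pairs. To pass from the untruncated convolution on $\R$ to the retarded integral $\int_{\tau}^{t}$ restricted to an arbitrary interval $I$, one applies the Christ--Kiselev lemma, which is valid whenever $q>s'$, i.e.\ whenever one is strictly away from the diagonal endpoint pair $(2,6)\times(2,6/5)$. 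The remaining double-endpoint case is handled directly by the Keel--Tao bilinear machinery already used for (i).

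Finally, the restriction of (ii) to an interval $I\subset\R$ and the freedom to choose $\tau\in\overline I$ require no further work: extending $F$ by zero outside $I$ and using the time-translation invariance of the heat-free argument reduces matters to the $\R$-version. This completes the plan; the only substantive step is the Keel--Tao endpoint bilinear interpolation, everything else being $TT^{*}$, HLS, and Christ--Kiselev.
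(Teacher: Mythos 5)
The paper does not prove Proposition~\ref{freestr}: it is stated as a classical result and the reader is referred to the cited literature (Ginibre--Velo, Yajima, Keel--Tao). Your plan reconstructs precisely the standard argument from those references --- $TT^{*}$ reduction, Hardy--Littlewood--Sobolev in time for non-endpoint pairs, the Keel--Tao bilinear real-interpolation machinery for the endpoint $(2,6)$, and Christ--Kiselev to pass to the retarded integral --- so the approach is correct and is indeed the one the paper implicitly relies on.

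Two small slips in your write-up, neither of which affects the validity of the plan. First, the exponent produced by \eqref{disps} with $p=r'$ is
\[
\tfrac{3}{2}\!\left(\tfrac{1}{r'}-\tfrac{1}{r}\right)\;=\;\tfrac{3}{2}\!\left(1-\tfrac{2}{r}\right)\;=\;\tfrac{2}{q},
\]
not $1-\tfrac{2}{q}$; this is exactly the HLS exponent needed for $L^{q'}_t\to L^{q}_t$, and it lies in $(0,1)$ precisely when $q>2$, as you intend. Second, the Christ--Kiselev lemma applies when $q>s$ (not $q>s'$); since $(q,r)$ admissible forces $q\ge 2$ and $(s,p)$ dual-admissible forces $s\le 2$, the strict inequality fails only in the double-endpoint case $q=s=2$, which you correctly single out for separate treatment via the endpoint bilinear estimate.
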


Similarly (see, e.g., \cite[Section 2.2.2]{Wang-Zhaohui-Chengchun_HarmonicAnalysisI_2011}), by means of  \eqref{disph}-\eqref{disphg} one infers an analogous class of space-time estimates for the heat propagator.

\begin{proposition}[Space-time estimates for $e^{t\Delta}$]~
\begin{itemize}
\item[(i)]  For any admissible pair $(q,r)$, the following homogeneous estimate holds:
\begin{equation}\label{eq:hofree_heat}
\|e^{t\Delta}f\|_{L^q([0,+\infty),L^r(\R^3))}\;\lesssim\;\|f\|_{L^2(\R^3)}\,.
\end{equation}
\item[(ii)] Let $I\subseteq \R$ be an interval of the form $[\tau,T)$, with $T\in(\tau,+\infty]$. For any admissible pair $(q,r)$ and any dual admissible pair $(s,p)$, the following inhomogeneous estimate holds:
\begin{equation}\label{eq:rifree_heat}
\Big\|\int_{\tau}^t e^{(t-\sigma)\Delta}F(\sigma)\,\,\ud\sigma\Big\|_{L^q(I;L^r(\R^3))}\;\lesssim\;\|F\|_{L^s(I;L^p(\R^3))}\,.
\end{equation}
\end{itemize}
\end{proposition}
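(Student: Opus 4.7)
The plan is to derive both estimates from the pointwise-in-time dispersive bounds \eqref{disph}--\eqref{disphg} by the standard $TT^*$/duality scheme, exploiting two features of the heat semigroup absent for its unitary counterpart: $e^{t\Delta}$ is self-adjoint on $L^2(\R^3)$ for each $t\ge 0$, and the time-decay kernels produced by \eqref{disph} are positive, which keeps Schur-type tests applicable at the endpoint.

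For the homogeneous bound (i), set $Tf:=\chi_{[0,+\infty)}(t)\,e^{t\Delta}f$, viewed as an operator from $L^2(\R^3)$ into $L^q([0,+\infty);L^r(\R^3))$. Self-adjointness yields $T^*F=\int_0^\infty e^{\sigma\Delta}F(\sigma)\,\ud\sigma$, so \eqref{eq:hofree_heat} is equivalent by $TT^*$-duality to the boundedness of
\begin{equation*}
TT^*F(t)\;=\;\int_0^\infty e^{(t+\sigma)\Delta}F(\sigma)\,\ud\sigma
\end{equation*}
from $L^{q'}_tL^{r'}_x$ into $L^{q}_tL^{r}_x$. Inserting the $L^{r'}\!\to\!L^{r}$ instance of \eqref{disph} (admissible since $r\in[2,6]$) and rewriting $\frac{3}{2}\bigl(\frac{1}{r'}-\frac{1}{r}\bigr)=\frac{2}{q}$ via admissibility reduces the claim to the $L^{q'}(\R_+)\to L^{q}(\R_+)$ boundedness of the positive integral operator with kernel $(t+\sigma)^{-2/q}$. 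For non-endpoint admissible pairs, where $0<2/q<1$, this is precisely the one-dimensional Hardy--Littlewood--Sobolev inequality, the scaling exponents matching exactly by admissibility. At the endpoint $(q,r)=(2,6)$ the kernel is $(t+\sigma)^{-1}$, and Schur's test with weight $w(t)=t^{-1/2}$, based on the homogeneity identity $\int_0^\infty(t+\sigma)^{-1}\sigma^{-1/2}\,\ud\sigma=Ct^{-1/2}$, gives the $L^2(\R_+)$-bound.

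For the inhomogeneous bound (ii), a direct application of \eqref{disph} to the retarded integral yields
\begin{equation*}
\Bigl\|\int_\tau^t e^{(t-\sigma)\Delta}F(\sigma)\,\ud\sigma\Bigr\|_{L^r_x}\;\lesssim\;\int_\tau^t(t-\sigma)^{-\beta}\|F(\sigma)\|_{L^p_x}\,\ud\sigma,\qquad \beta:=\tfrac{3}{2}\bigl(\tfrac{1}{p}-\tfrac{1}{r}\bigr),
\end{equation*}
and admissibility of $(q,r)$ together with dual admissibility of $(s,p)$ force the Hardy--Littlewood--Sobolev scaling relation $1+\frac{1}{q}=\frac{1}{s}+\beta$ and the range $\beta\in[0,1]$. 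For $\beta\in(0,1)$ the one-dimensional Hardy--Littlewood--Sobolev inequality closes the estimate; $\beta=0$ is the trivial $L^2$-contractivity of the semigroup; the remaining endpoint $\beta=1$, which corresponds to $(q,r)=(2,6)$ and $(s,p)=(2,6/5)$, is handled by a parabolic energy argument---multiplying $\partial_t u=\Delta u+F$ by $\bar u$, integrating in space and time, and using the Sobolev embedding $\|u\|_{L^6_x}\lesssim\|\nabla u\|_{L^2_x}$ to absorb the $L^2_tL^6_x$ norm of $u$ against the $L^2_tL^{6/5}_x$ norm of $F$.

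The main obstacle, as always for Strichartz-type estimates, is the endpoint admissible pair $(q,r)=(2,6)$. For the unitary Schr\"odinger flow this endpoint is the content of the deep Keel--Tao bilinear interpolation theorem, but in the present dissipative setting the combination of positivity/scale-invariance of the heat convolution kernels and the availability of classical parabolic energy estimates reduces the endpoint to elementary arguments and bypasses any need for the Keel--Tao machinery.
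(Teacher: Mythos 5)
Your argument is essentially correct, and the two endpoint treatments are genuinely nice: in part (i), the $TT^*$ reduction produces the kernel $(t+\sigma)^{-2/q}$, which unlike the Schr\"odinger kernel $|t-\sigma|^{-2/q}$ has no diagonal singularity, so at $(q,r)=(2,6)$ the Schur/weight argument (equivalently, Hilbert's integral inequality) gives $L^2(\R_+)$-boundedness for free; and in part (ii), the parabolic energy identity indeed reproves the diagonal endpoint $(q,r)=(2,6)$, $(s,p)=(2,6/5)$ without Keel--Tao. (For comparison, the paper does not present a proof: it merely invokes \cite{Wang-Zhaohui-Chengchun_HarmonicAnalysisI_2011}, so your argument supplies detail that the text omits.)

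There is, however, one gap in part (ii). The blanket claim that one-dimensional Hardy--Littlewood--Sobolev closes the case $\beta\in(0,1)$ overlooks that HLS on the line requires $1<s<q<\infty$, while the scaling relation $1+\tfrac1q=\tfrac1s+\beta$ with $\beta\in(0,1)$ is also compatible with $s=1$ (then $\beta=1/q$ and $q<\infty$) and with $q=\infty$ (then $\beta=1/s'$ and $s>1$). In these cases the retarded kernel $(t-\sigma)_+^{-\beta}$ lies outside $L^q$ (respectively $L^{s'}$) because $\beta q=1$ (respectively $\beta s'=1$), so the convolution bound $L^1\to L^q$ (respectively $L^s\to L^\infty$) actually fails for the scalar kernel, and the reduction breaks. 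Fortunately both missing cases are easy with tools you already have: for $(s,p)=(1,2)$ and any admissible $(q,r)$ with $r>2$, apply Minkowski's integral inequality to pull the $L^q_tL^r_x$ norm inside the $\sigma$-integral of $e^{(t-\sigma)\Delta}F(\sigma)\,\chi_{\{\sigma<t\}}$, and then use the homogeneous estimate \eqref{eq:hofree_heat} on each slice; for $(q,r)=(\infty,2)$ and any dual-admissible $(s,p)$ with $p<2$, pair $\int_\tau^t e^{(t-\sigma)\Delta}F(\sigma)\,\ud\sigma$ against $g\in L^2_x$, use self-adjointness to move $e^{(t-\sigma)\Delta}$ onto $g$, and apply \eqref{eq:hofree_heat} for the admissible pair $(s',p')$. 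With these two insertions the proof of (ii) is complete.
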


%Both Propositions \ref{freestr} and \ref{freestr_heat} are classical -- see, e.g., \cite[Theorem 2.3.3]{cazenave} and \cite[Section 2.2.2]{Wang-Zhaohui-Chengchun_HarmonicAnalysisI_2011}. 

We can also combine the previous results in order to infer $L^p$-$L^r$ estimates (Proposition \ref{prop:pw_ex_lemma}) and space-time estimates (Proposition \ref{le:stri_hs}) for the heat-Schr\"o\-dinger propagator. %Although they are also somehow standard, for the sake of completeness here we will present a short proof, especially for Proposition \ref{le:stri_hs}.

\begin{proposition}[Pointwise-in-time estimates for the heat-Schr\"odinger flow]\label{prop:pw_ex_lemma}~

\noindent For any $t>0$, $p\in[1,2]$, and $r\in[2,+\infty]$,
\begin{eqnarray}
 \|e^{(\ii+\eps)t\Delta}f\|_{L^r(\R^3)}\!\!&\lesssim& \!\!\eps^{-\frac{3}{2}|\frac{1}{p'}-\frac{1}{r}|}t^{-\frac{3}{2}(\frac1p-\frac1r)}\|f\|_{L^p(\R^3)} \label{hsdisp} \\
 \|\nabla e^{(\ii+\eps)t\Delta}f\|_{L^r(\R^3)}\!\!&\lesssim&\!\!  \eps^{-\frac{3}{2}|\frac{1}{p'}-\frac{1}{r}|-\frac12}t^{-\frac{3}{2}(\frac1p-\frac1r)-\frac12}\|f\|_{L^p(\R^3)}\,. \label{hsdispgr}
\end{eqnarray}
\end{proposition}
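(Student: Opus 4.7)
The plan is to exploit the semigroup identity
\[
e^{(\ii+\eps)t\Delta}\;=\;e^{\eps t\Delta}\circ e^{\ii t\Delta}\;=\;e^{\ii t\Delta}\circ e^{\eps t\Delta}\,,
\]
which holds because both operators are functions of $\Delta$ and therefore commute. This reduces the desired estimate to a composition of one Schr\"odinger factor (carrying all the $t$-decay needed to cross from $L^p$ to $L^{p'}$) and one heat factor (carrying the remaining $L^q\to L^r$ smoothing, with the $\eps$-cost).

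\smallskip

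First I would handle the case $r\geqslant p'$. Writing $e^{(\ii+\eps)t\Delta}=e^{\eps t\Delta}\circ e^{\ii t\Delta}$, I would apply the Schr\"odinger dispersive bound \eqref{disps} with exponent $p'\in[2,+\infty]$ to get
\[
 \|e^{\ii t\Delta}f\|_{L^{p'}(\R^3)}\;\lesssim\;t^{-\frac32\bigl(\frac{1}{p}-\frac{1}{p'}\bigr)}\|f\|_{L^p(\R^3)}\,,
\]
and then the heat estimate \eqref{disph} with the pair $(p',r)$ (for which $p'\leqslant r$), applied at time $\eps t$, to get
\[
 \|e^{\eps t\Delta}g\|_{L^r(\R^3)}\;\lesssim\;(\eps t)^{-\frac32\bigl(\frac{1}{p'}-\frac{1}{r}\bigr)}\|g\|_{L^{p'}(\R^3)}\,.
\]
Composing these, the $t$-powers sum to $t^{-\frac32(\frac1p-\frac1r)}$ and the $\eps$-factor becomes $\eps^{-\frac32(\frac{1}{p'}-\frac{1}{r})}=\eps^{-\frac32|\frac{1}{p'}-\frac{1}{r}|}$, yielding \eqref{hsdisp} in this regime.

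\smallskip

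For the opposite case $r\leqslant p'$ I would reverse the order and write $e^{(\ii+\eps)t\Delta}=e^{\ii t\Delta}\circ e^{\eps t\Delta}$, applying \eqref{disph} to $e^{\eps t\Delta}\colon L^p\to L^{r'}$ (legal since $r'\geqslant p$) and then \eqref{disps} to $e^{\ii t\Delta}\colon L^{r'}\to L^r$. A brief algebraic check shows $\tfrac{1}{p}-\tfrac{1}{r'}=\tfrac{1}{r}-\tfrac{1}{p'}=|\tfrac{1}{p'}-\tfrac{1}{r}|$, so the two cases fit together and give \eqref{hsdisp} uniformly.

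\smallskip

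For the gradient bound \eqref{hsdispgr}, I would run exactly the same splitting but absorb the derivative into the heat factor. That is, in the case $r\geqslant p'$ use
\[
\nabla e^{(\ii+\eps)t\Delta}\;=\;\bigl(\nabla e^{\eps t\Delta}\bigr)\circ e^{\ii t\Delta}\,,
\]
and invoke \eqref{disphg} at time $\eps t$ for the first factor; this introduces the extra $(\eps t)^{-1/2}$, which yields precisely the claimed additional power $\eps^{-1/2}t^{-1/2}$. In the case $r\leqslant p'$, I would instead write $\nabla e^{(\ii+\eps)t\Delta}= e^{\ii t\Delta}\circ\nabla e^{\eps t\Delta}$ (commuting $\nabla$ with the Schr\"odinger propagator) and apply \eqref{disphg} to $\nabla e^{\eps t\Delta}\colon L^p\to L^{r'}$, followed by \eqref{disps} on $e^{\ii t\Delta}\colon L^{r'}\to L^r$. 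I do not expect any serious obstacle: the whole argument is just bookkeeping of exponents and the only non-trivial input is the commutativity $\nabla e^{(\ii+\eps)t\Delta}=e^{(\ii+\eps)t\Delta}\nabla$, which lets $\nabla$ land on whichever factor makes the estimate cheapest.
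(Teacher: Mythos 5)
Your proposal is correct and is precisely what the paper has in mind: the paper's own proof is just the one-line remark that the estimates follow by combining the heat and Schr\"odinger decay bounds \eqref{disps}--\eqref{disphg}, and your two-case splitting according to whether $r\geqslant p'$ or $r\leqslant p'$ (together with letting $\nabla$ fall on the heat factor) is the obvious way to carry that out, with the exponent bookkeeping checking out exactly as you say.
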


\begin{proof}
The proof is straightforward and follows by combining the decay estimates of both the heat and the Schr\"odinger propagators, see formulas \eqref{disps}-\eqref{disphg} above. In fact, similar decay estimates follow also by simply ignoring the hyperbolic part given by the Schr\"odinger evolution, however with a worse control in terms of $\eps$.
\end{proof}

\begin{proposition}[Space-time estimates for the heat-Schr\"odinger flow]\label{le:stri_hs}~

Let $\eps>0$ and let $(q,r)$ be an admissible pair. 
\begin{itemize}
 \item[(i)]  One has (homogeneous Strichartz estimate)
\begin{equation}\label{eq:homstrich}
\|e^{(\ii+\eps)t\Delta}f\|_{L^q([0,T],L^r(\R^3))}\;\lesssim\;\|f\|_{L^2(\R^3)}\,.
\end{equation}
\item[(ii)] Let $T>0$ and let the pair $(s,p)$ satisfy
\begin{equation}\label{est_dual}
\frac{2}{s}+\frac{3}{p}=\frac{7}{2},\qquad
\begin{cases}
\frac{1}{2}\leqslant\frac{1}{p}\leqslant 1&2\leqslant r<3\\
\frac{1}{2}\leqslant\frac{1}{p}<\frac{1}{r}+\frac{2}{3}&3\leqslant r\leqslant 6\,.
\end{cases}
\end{equation}
Then (inhomogeneous retarded Strichartz estimate)
\begin{equation}\label{eq:st}
\Big\|\int_{0}^{t} e^{(\ii+\eps)(t-\tau)\Delta}F(\tau)\,\,\ud\tau\Big
\|_{L^q([0,T],L^r(\R^3))}\;\lesssim_{\,\eps}\;\|F\|_{L^s([0,T],L^p(\R^3))}\,.
\end{equation}
\item[(iii)] Assume in addition that $(q,r)$ is non-endpoint. Let $T>0$ and let the pair $(s,p)$ satisfy
\begin{equation}\label{est_grad_dual}
\frac{2}{s}+\frac{3}{p}=\frac{5}{2},\qquad\frac{1}{2}\leqslant\frac{1}{p}<\frac{1}{r}+\frac{1}{3}\,.
\end{equation}
Then (inhomogeneous retarded Strichartz estimate)
\begin{equation}\label{eq:grst}
\Big\|\nabla\!\!\int_{0}^{t} e^{(\ii+\eps)(t-\tau)\Delta}F(\tau)\,\,\ud\tau\,\Big\|_{L^q([0,T],L^r(\R^3))}\;\lesssim_{\,\eps}\;\|F\|_{L^s([0,T],L^p(\R^3))}\,.
\end{equation}
\end{itemize}
\end{proposition}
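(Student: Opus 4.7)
The plan is to treat the three items in sequence, with the homogeneous bound as the cornerstone and the two inhomogeneous bounds following from a unified template: pointwise dispersive decay from Proposition \ref{prop:pw_ex_lemma}, composed with one-sided fractional integration in time.

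For part (i), I would apply the abstract $TT^*$/Keel-Tao machinery. The only nontrivial input is a dispersive estimate of the form $\|e^{(\ii+\eps)t\Delta}f\|_{L^r}\lesssim t^{-\frac{3}{2}(\frac{1}{r'}-\frac{1}{r})}\|f\|_{L^{r'}}$, uniform in $\eps$; this is precisely Proposition \ref{prop:pw_ex_lemma} specialised to $p=r'$, because the $\eps$-loss $\eps^{-\frac{3}{2}|\frac{1}{p'}-\frac{1}{r}|}$ collapses to $1$ on the dual-admissible line. Combined with the trivial $L^2$-contractivity of $e^{(\ii+\eps)t\Delta}$ (immediate in Fourier, since $\RE((\ii+\eps)(-|\xi|^2))=-\eps|\xi|^2\leqslant 0$), the abstract Strichartz theorem delivers \eqref{eq:homstrich}, endpoint included.

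For part (ii), the first move is Minkowski's inequality, which lets me pull the $L^r_x$ norm inside the time integral, followed by the pointwise bound \eqref{hsdisp} with input exponent $p$ and output exponent $r$:
\begin{equation*}
\Big\|\!\int_0^t e^{(\ii+\eps)(t-\tau)\Delta}F(\tau)\,\ud\tau\Big\|_{L^r_x}\;\lesssim_{\,\eps}\;\int_0^t (t-\tau)^{-\alpha}\|F(\tau)\|_{L^p_x}\,\ud\tau\,,\qquad \alpha\;:=\;\tfrac{3}{2}\big(\tfrac{1}{p}-\tfrac{1}{r}\big)\,.
\end{equation*}
I would then invoke the one-sided Hardy-Littlewood-Sobolev inequality (fractional integration) in time to bound the right-hand side in $L^q_t([0,T])$ by $\|F\|_{L^s_t L^p_x}$. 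The HLS scaling $\tfrac{1}{s}-\tfrac{1}{q}=1-\alpha$, combined with admissibility of $(q,r)$, yields exactly $\tfrac{2}{s}+\tfrac{3}{p}=\tfrac{7}{2}$; the strict bound $\alpha<1$ translates to $\tfrac{1}{p}<\tfrac{1}{r}+\tfrac{2}{3}$, and the requirement $s\geqslant 1$ (needed for $L^s$ to make sense) is precisely $\tfrac{1}{p}\geqslant\tfrac{1}{2}$, matching the ranges in \eqref{est_dual}.

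Part (iii) repeats the template verbatim but starts from the gradient bound \eqref{hsdispgr}; the extra $(t-\tau)^{-1/2}$ shifts the fractional-integration exponent to $\alpha+\tfrac{1}{2}$ and the scaling to $\tfrac{2}{s}+\tfrac{3}{p}=\tfrac{5}{2}$, with the upper constraint $\alpha+\tfrac{1}{2}<1$ becoming $\tfrac{1}{p}<\tfrac{1}{r}+\tfrac{1}{3}$. The non-endpoint restriction on $(q,r)$ in (iii) is needed to stay off the $L^2_t$-endpoint of HLS that the Strichartz endpoint $(2,6)$ would otherwise force. The main technical nuisance will be the borderline instance $\alpha=0$ in (ii) (attained only when $r<3$), where HLS degenerates; there I would fall back on Hölder in time on the finite interval $[0,T]$, absorbing any resulting $T$-power into the constant hidden in $\lesssim_{\,\eps}$. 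Away from these boundary cases the argument is entirely routine once Proposition \ref{prop:pw_ex_lemma} is in hand.
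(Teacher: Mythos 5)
Your plan is correct and structurally the same as the paper's: pointwise dispersive bounds composed with fractional integration in time for the inhomogeneous estimates, and a direct route for the homogeneous one. There are, however, a few deviations worth noting. For part (i), the paper avoids the abstract Keel--Tao machinery altogether by factoring the semi-group, $e^{(\ii+\eps)t\Delta}=e^{\eps t\Delta}e^{\ii t\Delta}$, using the $L^r$-contractivity of $e^{\eps t\Delta}$ and then the ordinary Schr\"odinger Strichartz estimate. Your $TT^*$ appeal works too, but you should be slightly careful that $e^{(\ii+\eps)t\Delta}$ is not a unitary group, so the hypothesis that actually needs verifying is the $L^1\to L^\infty$ bound on $U(t)U(s)^*=e^{\eps(t+s)\Delta}e^{\ii(t-s)\Delta}$ (which holds, by $L^\infty$-contractivity of the heat part, but is not literally what you cite). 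For parts (ii)--(iii), the paper proves the \emph{untruncated} bound with kernel $|t-\tau|^{-\gamma}$ via the two-sided Hardy--Littlewood--Sobolev inequality and then invokes Christ--Kiselev to pass to the retarded operator; your direct appeal to the one-sided fractional-integration inequality is equally valid and arguably more economical, since after Minkowski the retarded kernel is pointwise dominated by the untruncated one, rendering Christ--Kiselev superfluous here. Finally, your fallback to H\"older on $[0,T]$ for the degenerate corner $\alpha=0$ (which occurs only at $(p,r)=(2,2)$, hence $(q,s)=(\infty,1)$) would introduce a spurious $T$-dependence into the constant; the paper instead uses Minkowski plus $L^2$-contractivity of the propagator, yielding the trivial $L^1_tL^2_x\to L^\infty_tL^2_x$ bound with a $T$-independent constant, consistent with the fact that the statement's implicit constant carries only the subscript $\eps$.
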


\begin{remark}\label{re:comp_epsno} In \eqref{est_dual} the range of admissible pairs $(s,p)$ is \emph{larger} as compared to the case of the Schr\"odinger equation. 
In fact, dispersive equations, even if hyperbolic, have the remarkable property of enjoying a class of smoothing estimates.
More specifically, for the Schr\"odinger equation it can be proved that the inhomogeneous part in the Duhamel formula enjoys the gain of regularity by one derivative in space, see Theorem 4.4 in \cite{Lineares-Ponce-book2015}. However, it is straightforward to check that estimate \eqref{eq:grst} for the heat-Schr\"odinger semi-group is stronger than estimate (4.26) in \cite{Lineares-Ponce-book2015} and it is better suited to study our problem.
\end{remark}

\begin{proof}[Proof of Proposition \ref{le:stri_hs}] 
We begin with the proof of part (i). Combining the homogeneous Strichartz estimates \eqref{eq:hofree} for the Schr\"odinger flow with the estimate $\|e^{\eps t\Delta}f\|_{L^r(\R^3)}\lesssim\|f\|_{L^r(\R^3)}$, which follows by \eqref{disph}, we get
\begin{align*}
 \|e^{(\ii+\eps)t\Delta}f\|_{L^q([0,+\infty),L^r(\R^3))}\;&=\; \|e^{\eps t\Delta}e^{\ii t\Delta}f\|_{L^q([0,+\infty),L^r(\R^3))}\\
&\lesssim\; \|e^{\ii t\Delta}f\|_{L^q([0,+\infty),L^r(\R^3))}\;\lesssim\;\|f\|_{L^2(\R^3)}\,,
\end{align*}
which proves \eqref{eq:homstrich}. 
Next we prove part (ii). In the special case $(q,r)=(+\infty,2)$ and $(s,p)=(1,2)$, the dispersive estimates \eqref{hsdisp} yields
\begin{equation}\label{fi_pi}
 \begin{aligned}
&\Big\|\int_{0}^{+\infty} e^{(\ii+\eps)|t-\tau|\Delta}F(\tau)\,\,\ud\tau\,\Big\|_{L^{\infty}([0,T],L^2(\R^3))}\\
&\qquad\lesssim \int_{0}^{+\infty}\|F(\tau)\|\,\ud\tau\;=\;\|F\|_{L^{1}([0,T],L^2(\R^3))}\,.
\end{aligned}
\end{equation}
For the generic case, namely $(p,r)\neq(2,2)$, owing to \eqref{hsdisp} one obtains
\begin{align*}
&\Big\|\int_{0}^{+\infty} e^{(\ii+\eps)|t-\tau|\Delta}F(\tau)\,\,\ud\tau\,\Big\|_{L^q([0,T],L^r(\R^3))}\\
&\qquad\lesssim_{\,\eps}\;\Big\|\int_{0}^{+\infty}|t-\tau|^{-\gamma}\|F(\tau)\|_{L^p(\R^3)}\,\,\ud\tau\,\Big\|_{L^q[0,T]}\,,
\end{align*}
where $\gamma:=\frac{3}{2}(\frac1p-\frac1r)\in(0, 1)$ by the assumptions on $p,r$. The Hardy-Littlewood-Sobolev inequality in time yields then
\begin{equation}\label{se_pi}
\Big\|\int_{0}^{+\infty} e^{(\ii+\eps)|t-\tau|\Delta}F(\tau)\,\,\ud\tau\,\Big\|_{L^q([0,T],L^r(\R^3))}\;\lesssim_{\,\eps}\;\|F\|_{L^s([0,T],L^p(\R^3))}
\end{equation}
with $\frac1s=1+\frac1q-\gamma$, namely $\frac{2}{s}+\frac{3}{p}=\frac{7}{2}$.
Now, using estimates \eqref{fi_pi}-\eqref{se_pi} and the Christ-Kiselev lemma \cite{Christ-Kiselev-2001}, we deduce the ``retarded estimates'' \eqref{eq:st}.
The proof of part (iii) proceeds similarly as for part (ii). Indeed, owing to the dispersive estimate with gradient \eqref{hsdispgr} and the Hardy-Littlewood-Sobolev inequality in time,
\begin{align*}
&\Big\|\,\nabla\!\!\int_{0}^{+\infty} e^{(\ii+\eps)|t-\tau|\Delta}F(\tau)\,\,\ud\tau\,\Big\|_{L^q([0,T],L^r(\R^3))}\\
&\qquad\lesssim_{\,\eps}\;\Big\|\int_{0}^{+\infty}|t-\tau|^{-\gamma}\|F(\tau)\|_{L^p(\R^3)}\,\,\ud\tau\,\Big\|_{L^q(0,+\infty)} \;\lesssim_{\,\eps}\;\|F\|_{L^s([0,T],L^p(\R^3))}\,,
\end{align*}
where now $\gamma=\frac{3}{2}\left(\frac1p-\frac1r\right)+\frac12\in(0, 1)$ and the exponent $s$ is given by $\frac{2}{s}+\frac{3}{p}=\frac{5}{2}$; this, and again the result by Christ-Kieselev, then imply \eqref{eq:grst}.
\end{proof}

%%%%%%%%%% NON SERVE PIU'
% \begin{remark}
% \textcolor{red}{remark a nostro sfavore...nel senso, alla fine non è che serviva sbattersi molto con heat-Sch\"odinger se poi basta solo heat per fare punto fisso....} 
%  The very same proof of Proposition \ref{le:stri_hs} discussed above for the \emph{heat-Schr\"{o}dinger} flow can be repeated word by word for the \emph{heat} flow only and in the very same regime of indices for the validity of \eqref{eq:st} and \eqref{eq:grst} one also finds 
%  \begin{equation}\label{eq:st_HEAT}
% \Big\|\int_{0}^{t} e^{\eps(t-\tau)\Delta}F(\tau)\,\,\ud\tau\Big
% \|_{L^q([0,T],L^r(\R^3))}\;\lesssim_{\,\eps}\;\|F\|_{L^s([0,T],L^p(\R^3))}\,.
% \end{equation}
% and
% \begin{equation}\label{eq:grst_HEAT}
% \Big\|\nabla\!\!\int_{0}^{t} e^{\eps(t-\tau)\Delta}F(\tau)\,\,\ud\tau\,\Big\|_{L^q([0,T],L^r(\R^3))}\;\lesssim_{\,\eps}\;\|F\|_{L^s([0,T],L^p(\R^3))}
% \end{equation}
% for arbitrary $\varepsilon>0$. 
% \end{remark}

For our analysis it will be necessary to apply the above Strichartz estimates for the heat-Schr\"{o}dinger flow in a regime of indices that guarantees also to control the smallness of the constant in each such inequalities in terms of the smallness of $T$. This leads us to introduce the following admissibility condition.

\begin{definition}\label{de:ad-grad_ad}
 Let $(q,r)$ be a admissible pair.
\begin{itemize}
 \item[(i)] A pair $(s,p)$ is called a $(q,r)$-admissible pair if
\begin{equation}\label{est_dual_loc}
 \frac{2}{s}+\frac{3}{p}<\frac{7}{2},\qquad
\begin{cases}
\frac{1}{2}\leqslant\frac{1}{p}\leqslant 1&2\leqslant r<3\\
\frac{1}{2}\leqslant\frac{1}{p}<\frac{1}{r}+\frac{2}{3}&3\leqslant r\leqslant 6\,.
\end{cases}
\end{equation}
\item[(ii)] A pair $(s,p)$ is a called $(q,r)$-grad-admissible pair if
\begin{equation}\label{est_grad_dual_loc}
\frac{2}{s}+\frac{3}{p}<\frac{5}{2},\quad\frac{1}{2}\leqslant\frac{1}{p}<\frac{1}{r}+\frac{1}{3}\,.
\end{equation}

\end{itemize}
\end{definition}
\begin{remark}\label{re:rel_adm}
If $(s,p)$ is a $(q,r)$-grad-admissible pair, then it is also $(q,r)$-admissible. Moreover, if $(s,p)$ is a $(q,r)$-admissible pair (resp.~$(q,r)$-grad-admissible), and $(q_1,r_1)$ is another admissible pair with $r_1<r$, then $(s,p)$ is also a $(q_1,r_1)$-admissible pair (resp.~ $(q_1,r_1)$-grad-admissible) pair.
\end{remark}
We can state now a useful Corollary to Proposition \ref{le:stri_hs}.

\begin{corollary}\label{co:stri_lo}
Let $\eps>0$ and $T>0$, and let $(q,r)$ be a admissible pair.
\begin{itemize}
 \item[(i)] For any $(q,r)$-admissible pair $(s,p)$,
\begin{equation}\label{eq:st_st}
\begin{split}
 \Big\|\int_{0}^{t} e^{(\ii+\eps)(t-\tau)\Delta}F(\tau)\,\,\ud\tau\,\Big\|_{L^q([0,T],L^r(\R^3))}\;&\lesssim_{\,\eps}\;T^{\theta}\|F\|_{L^s([0,T],L^p(\R^3))} \\
 & \qquad \textstyle \theta:=\frac74-\frac1s-\frac{3}{2p}\,.
\end{split}
\end{equation}
\item[(ii)] Assume in addition that $(q,r)$ is non-endpoint. For any $(q,r)$-grad-admissible pair,
\begin{equation}\label{eq:grst_st}
\begin{split}
\Big\|\nabla\!\!\int_{0}^{t} e^{(\ii+\eps)(t-\tau)\Delta}F(\tau)\,\,\ud\tau\,\Big\|_{L^q([0,T],L^r(\R^3))}\;&\lesssim_{\,\eps}\;T^{\theta}\|F\|_{L^s([0,T],L^p(\R^3))} \\
& \qquad \textstyle \theta:=\frac54-\frac1s-\frac{3}{2p}\,.
\end{split}
\end{equation}
\end{itemize}
In either case, it follows by the assumptions that $\theta>0$.
\end{corollary}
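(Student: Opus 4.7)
The plan is to reduce each statement to the corresponding ``equality'' Strichartz estimate from Proposition \ref{le:stri_hs} by trading the strict inequality in the admissibility relation for a power of $T$ via H\"older in time. Concretely, for part (i) I introduce an auxiliary exponent $\tilde s$ defined by
\[
 \frac{2}{\tilde s}+\frac{3}{p}\;=\;\frac{7}{2},
\]
so that the pair $(\tilde s, p)$ is dual-admissible in the sense of \eqref{est_dual} (the condition on $p$ in Definition \ref{de:ad-grad_ad} coincides exactly with that required in \eqref{est_dual}, so no additional constraint is needed). The $(q,r)$-admissibility $\frac{2}{s}+\frac{3}{p}<\frac{7}{2}$ is then equivalent to $\tilde s\leqslant s$, and a straightforward H\"older inequality on the bounded interval $[0,T]$ gives
\[
 \|F\|_{L^{\tilde s}([0,T],L^p(\R^3))}\;\leqslant\; T^{\frac{1}{\tilde s}-\frac{1}{s}}\,\|F\|_{L^{s}([0,T],L^p(\R^3))}\;=\;T^{\theta}\,\|F\|_{L^{s}([0,T],L^p(\R^3))},
\]
since $\frac{1}{\tilde s}-\frac{1}{s}=\frac{7}{4}-\frac{3}{2p}-\frac{1}{s}=\theta$.

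Applying then the inhomogeneous Strichartz estimate \eqref{eq:st} of Proposition \ref{le:stri_hs}(ii) to $(\tilde s, p)$ yields
\[
 \Big\|\int_0^t e^{(\ii+\eps)(t-\tau)\Delta}F(\tau)\,\ud\tau\Big\|_{L^q([0,T],L^r(\R^3))}\;\lesssim_{\,\eps}\;\|F\|_{L^{\tilde s}([0,T],L^p(\R^3))}\;\lesssim_{\,\eps}\;T^{\theta}\|F\|_{L^s([0,T],L^p(\R^3))},
\]
which is \eqref{eq:st_st}. The positivity $\theta>0$ is immediate from the strict inequality in \eqref{est_dual_loc}.

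For part (ii) the procedure is identical, with the only modification that the auxiliary index is now chosen by $\frac{2}{\tilde s}+\frac{3}{p}=\frac{5}{2}$ so that $(\tilde s, p)$ matches the condition \eqref{est_grad_dual} of Proposition \ref{le:stri_hs}(iii); one has again $\tilde s\leqslant s$ thanks to $(q,r)$-grad-admissibility, H\"older in time costs $T^{\frac{1}{\tilde s}-\frac{1}{s}}=T^{\frac{5}{4}-\frac{3}{2p}-\frac{1}{s}}=T^{\theta}$, and the estimate \eqref{eq:grst} applied to $(\tilde s, p)$ produces \eqref{eq:grst_st}. The only point that requires minimal care, rather than an actual obstacle, is checking that the $p$-range conditions in Definition \ref{de:ad-grad_ad} coincide with those in Proposition \ref{le:stri_hs}, so that the auxiliary pair $(\tilde s, p)$ is genuinely an admissible datum for the ``equality'' estimates; this is immediate by inspection, after which the proof is essentially a one-line H\"older interpolation.
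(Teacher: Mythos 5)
Your proof is correct and fills in precisely the argument the paper leaves implicit: the Corollary is stated without a written proof, and the expected derivation is exactly the one you give, namely applying H\"older in time on $[0,T]$ to pass from the exponent $\tilde s$ saturating the equality in Proposition \ref{le:stri_hs} to the strictly larger $s$ in Definition \ref{de:ad-grad_ad}, which costs $T^{1/\tilde s-1/s}=T^\theta$. The check that the $p$-ranges in \eqref{est_dual_loc}, \eqref{est_grad_dual_loc} match those in \eqref{est_dual}, \eqref{est_grad_dual} is indeed the only point requiring care, and you have addressed it.
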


\subsection{Further technical Lemmas}

We conclude the Section by collecting a few technical Lemmas that will be useful for setting up the fixed point argument (Section \ref{sec:propagator}).

Let us first introduce the following.

\begin{definition}\label{relevant}
Given $T>0$, we define 
$$X^{(4,3)}[0,T]\;:=\;L^{\infty}([0,T],H^{1}(\R^3))\cap L^{4}([0,T],W^{1,3}(\R^3))$$
equipped with the Banach norm
$$\|\cdot\|_{X^{(4,3)}[0,T]}:=\|\cdot\|_{L^{\infty}([0,T],H^{1}(\R^3))}+\|\cdot\|_{L^{4}([0,T],W^{1,3}(\R^3))}\,.$$
\end{definition}

\begin{remark}
By interpolation we have that, for every admissible pair $(q, r)$ with $r\in[2, 3]$.
\begin{equation}\label{int_Str}
\|u\|_{L^{q}([0,T],W^{1,r}(\R^3))}\;\lesssim\; \|u\|_{X^{(4,3)}[0,T]}\,.
\end{equation}
Furthermore, Sobolev embedding also yields
\begin{equation}\label{int_Str_emb}
\|u\|_{L^{q}([0,T],L^{\frac{3r}{3-r}}(\R^3))}\;\lesssim\; \|u\|_{X^{(4,3)}[0,T]}
\end{equation}
for any admissible pair $(q, r)$ with $r\in[2, 3]$.
\end{remark}

\begin{lemma}\label{le:abcd}~
\begin{itemize}
\item[(i)] Let $A\in\widetilde{\mathcal A}_1$ or $A\in\widetilde{\mathcal A}_2$. There exist $(4,3)$-grad-admissible pairs  $(s_1,p_1)$, $(s_2,p_2)$ such that, for any $u\in X^{(4,3)}[0,T]$, 
\begin{equation*}
A_i\cdot\nabla u\;\in\; L^{s_i}([0,T],L^{p_i}(\R^3))\,,\qquad i\in\{1,2\}\,,
\end{equation*}
and
\begin{equation}\label{holdgradab}
\begin{aligned}
\|A_i\cdot\nabla u\|_{L^{s_i}([0,T],L^{p_i}(\R^3))}\;\lesssim \;\|A\|_{L^{a_i}([0,T],L^{b_i}(\R^3))}\|u\|_{X^{(4,3)}[0,T]}\,.
\end{aligned}
\end{equation}
\item[(ii)] Let $A\in\widetilde{\mathcal A}_1$. There exist four $(4,3)$-grad-ad\-mis\-sible pairs  $(s_{ij},p_{ij})$, $i,j\in\{1,2\}$, such that, for any $u\in X^{(4,3)}[0,T]$, 
\begin{equation*}
A_i\cdot A_j\,u\;\in\; L^{s_{ij}}([0,T],L^{p_{ij}}(\R^3))
\end{equation*}
and
\begin{equation}\label{holdgradcd}
\begin{aligned}
 &\|A_i\cdot A_j\,u\|_{L^{s_{ij}}([0,T],L^{p_{ij}}(\R^3))}\\
&\quad\lesssim\|A_i\|_{L^{a_i}([0,T],L^{b_i}(\R^3))}\,\|A_j\|_{L^{a_j}([0,T],L^{b_j}(\R^3))}\,\|u\|_{X^{(4,3)}[0,T]}\,.
\end{aligned}
\end{equation}
\item[(iii)] Let $A\in\widetilde{\mathcal A}_2$. There exist four $(4,3)$-admis\-sible pairs $(s_{ij},p_{ij})$, $i,j\in\{1,2\}$, such that, for any $u\in X^{(4,3)}[0,T]$, 
\begin{equation*}
A_i\cdot A_j\,u\;\in\; L^{s_{ij}}([0,T],W^{1,p_{ij}}(\R^3))
\end{equation*}
and
\begin{equation}\label{holdgradcd_grad}
\begin{aligned}
&\|A_i\cdot A_j\,u\|_{L^{s_{ij}}([0,T],W^{1,p_{ij}}(\R^3))}\\
&\quad \lesssim \big(\|A_i\|_{L^{a_i}([0,T],L^{b_i}(\R^3))}+\|\nabla A_i\|_{L^{a_i}([0,T],L^{3b_i/(3+b_i)}(\R^3))}\big)\;\times\\
&\qquad\times\big(\|A_j\|_{L^{a_j}([0,T],L^{b_j}(\R^3))}+\|\nabla A_j\|_{L^{a_j}([0,T],L^{3b_j/(3+b_j)}(\R^3))}\big)\;\times \\
&\qquad\times\|u\|_{X^{(4,3)}[0,T]}\,.
\end{aligned}
\end{equation}
\end{itemize}
\end{lemma}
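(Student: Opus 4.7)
All three assertions follow from H\"older's inequality in space and time, combined with the interpolation and Sobolev bounds on $u\in X^{(4,3)}[0,T]$ recorded in \eqref{int_Str}--\eqref{int_Str_emb}. The common thread is that the Strichartz scaling $2/q+3/r=3/2$ for an admissible pair $(q,r)$ combines with the assumption $2/a_j+3/b_j<1$ on each $A_j$ to automatically yield the output relation $2/s+3/p<5/2$, so the grad-admissibility of the product pair $(s,p)$ reduces to controlling the Lebesgue-range condition $1/p\in[1/2,2/3)$.

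For (i), I would set $(s_i,p_i):=(a_i,b_i)*(q_i,r_i)$ with $(q_i,r_i)$ a Strichartz-admissible pair and $r_i\in[2,3]$, so that \eqref{int_Str} provides $\nabla u\in L^{q_i}_t L^{r_i}_x$ with norm controlled by $\|u\|_{X^{(4,3)}[0,T]}$; H\"older then yields \eqref{holdgradab}. Grad-admissibility of $(s_i,p_i)$ follows at once from the scaling identity above, while $1/p_i=1/b_i+1/r_i\in[1/2,2/3)$ is enforced by choosing $r_i=3$ if $b_i\in(3,6]$ and $r_i=2$ if $b_i>6$. When $A\in\widetilde{\mathcal A}_2$, the required $L^{b_i}_x$-control of $A_i$ is extracted from its $W^{1,3b_i/(3+b_i)}_x$-norm by the Sobolev embedding $W^{1,3b_i/(3+b_i)}\hookrightarrow L^{b_i}$ (which is in fact an equality of scaling).

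For (ii), I would take $(s_{ij},p_{ij}):=(a_i,b_i)*(a_j,b_j)*(q,\rho)$ with $(q,\rho)$ a ``Sobolev-dual admissible'' pair, i.e.\ $2/q+3/\rho=1/2$ and $\rho\in[6,+\infty)$; such a bound on $u$ comes directly from \eqref{int_Str_emb} via the substitution $\rho=3r/(3-r)$. H\"older then gives \eqref{holdgradcd}, with grad-admissibility reading $2/s_{ij}+3/p_{ij}=(2/a_i+3/b_i)+(2/a_j+3/b_j)+1/2<5/2$. The range condition $1/p_{ij}=1/b_i+1/b_j+1/\rho\in[1/2,2/3)$ is achieved by choosing $\rho$ sufficiently large, which is feasible because $1/b_i+1/b_j<2/3$ when $b_i,b_j\in(3,6)$.

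Part (iii) is the most delicate and contains the main obstacle. I would expand $\nabla(A_iA_ju)=(\nabla A_i)A_ju+A_i(\nabla A_j)u+A_iA_j\nabla u$ and set $(s_{ij},p_{ij}):=(a_i,b_i)*(a_j,b_j)*(q,r)$ with $(q,r)$ Strichartz-admissible and $r\in[2,3)$. The zeroth-order factor $A_iA_ju$ and the term $A_iA_j\nabla u$ are controlled by H\"older using $u,\nabla u\in L^q_t L^r_x$ from \eqref{int_Str}. The crux is to match the two gradient-of-potential terms to the very same $p_{ij}$: H\"older with $\nabla A_i\in L^{a_i}_t L^{3b_i/(3+b_i)}_x$ and $A_j\in L^{a_j}_t L^{b_j}_x$ forces $u\in L^q_t L^{3r/(3-r)}_x$, and this is precisely what \eqref{int_Str_emb} furnishes through the critical Sobolev embedding $W^{1,r}\hookrightarrow L^{3r/(3-r)}$. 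The $1/3$-shift created by replacing $\|A_i\|_{L^{b_i}_x}$ with $\|\nabla A_i\|_{L^{3b_i/(3+b_i)}_x}$ is thereby absorbed exactly by the Sobolev gain, and all four summands land in the same $L^{p_{ij}}_x$. The $(4,3)$-admissibility (weaker than grad-admissibility) imposes $1/p_{ij}=1/b_i+1/b_j+1/r\in[1/2,1)$, achievable by picking $r\in[2,3)$ suitably -- $r$ close to $2$ when $b_i,b_j$ are large and $r$ close to $3$ when $b_i,b_j$ are close to $3$ -- which is always possible because $b_i,b_j>3$.
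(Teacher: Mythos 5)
Your proposal is correct and takes exactly the approach the paper itself indicates (the paper's own proof simply says ``repeatedly apply H\"older's inequality and the Sobolev embedding'' and omits all details, which your sketch fills in faithfully). The central observation you isolate -- that the admissibility condition $2/a_j+3/b_j<1$ combined with the Strichartz scaling $2/q+3/r=3/2$ automatically yields the scaling half of the ($(4,3)$-)grad-admissibility constraints, leaving only a Lebesgue-range condition on $1/p$ to be tuned by a free choice of $r\in[2,3)$ (or $\rho=3r/(3-r)\in[6,\infty)$) -- is precisely the mechanism that makes all three parts go through, and your treatment of part (iii), where the derivative falling on $A_i$ costs a factor $1/3$ in the spatial exponent which is exactly cancelled by the Sobolev gain $W^{1,r}\hookrightarrow L^{3r/(3-r)}$ on $u$, is the key point. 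One small remark: you correctly restrict to $r\in[2,3)$ when invoking \eqref{int_Str_emb}, whereas the paper's remark nominally allows $r=3$; the endpoint $r=3$ would give $3r/(3-r)=\infty$ and the embedding $W^{1,3}\hookrightarrow L^\infty$ fails, so your restriction is the right one.
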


\begin{proof}
The proof consists in repeatedly applying H\"older's inequality and the Sobolev embedding, we omit the standard details.
\end{proof}

\begin{lemma}\label{le:Vab_grad}
Let $A\in\widetilde{\mathcal A}_1$ or $A\in\widetilde{\mathcal A}_2$, and let $\eps>0$. There exists a constant $\theta_{\!A}>0$ such that, for every $T\in(0,1]$,
\begin{equation*}
\Big\Vert\int_0^te^{(\ii+\eps)(t-\sigma)\Delta}A(\sigma)\cdot\nabla u(\sigma)\,\ud\sigma\,\Big\Vert_{X^{(4,3)}[0,T]}\;\lesssim_{\,\eps,A}\; T^{\theta_{\!A}}\Vert u\Vert_{X^{(4,3)}[0,T]}\,.
\end{equation*}
\end{lemma}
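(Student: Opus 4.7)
The strategy is to decompose $A=A_1+A_2$, apply the off-the-shelf bound of Lemma \ref{le:abcd}(i) to each piece $A_i\cdot\nabla u$, and then feed the resulting $L^{s_i}_TL^{p_i}_x$ estimate into the retarded inhomogeneous heat-Schr\"odinger estimates of Corollary \ref{co:stri_lo}. The $T^{\theta_A}$ factor arises because the admissibility inequalities of Definition \ref{de:ad-grad_ad} are \emph{strict}, so the exponents $\theta$ produced by Corollary \ref{co:stri_lo} are strictly positive.

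More concretely, by linearity it suffices to estimate each $\int_0^t e^{(\ii+\eps)(t-\sigma)\Delta}A_i(\sigma)\cdot\nabla u(\sigma)\,\ud\sigma$ separately. By Lemma \ref{le:abcd}(i), for each $i\in\{1,2\}$ there is a $(4,3)$-grad-admissible pair $(s_i,p_i)$ such that
\[
\|A_i\cdot\nabla u\|_{L^{s_i}([0,T],L^{p_i}(\R^3))}\;\lesssim\;\|A_i\|_{L^{a_i}([0,T],L^{b_i}(\R^3))}\|u\|_{X^{(4,3)}[0,T]}.
\]
Since $A_i\in L^{a_i}_{\mathrm{loc}}(\R,L^{b_i})$ and $T\le 1$, the factor $\|A_i\|_{L^{a_i}([0,T],L^{b_i})}$ is bounded by a finite constant depending only on $A$. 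Recalling that both $(q,r)=(\infty,2)$ and $(q,r)=(4,3)$ are non-endpoint admissible pairs, and that by Remark \ref{re:rel_adm} a $(4,3)$-grad-admissible pair $(s_i,p_i)$ is automatically $(\infty,2)$-grad-admissible (since $2<3$), we can apply Corollary \ref{co:stri_lo}(ii) with both choices of $(q,r)$ to control the gradient of the Duhamel integral in both $L^\infty_T L^2_x$ and $L^4_T L^3_x$. In the same way, Corollary \ref{co:stri_lo}(i) controls the non-gradient contributions in these same mixed norms.

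Combining the four resulting estimates yields, for each $i$,
\[
\Big\|\!\int_0^t\! e^{(\ii+\eps)(t-\sigma)\Delta}A_i(\sigma)\!\cdot\!\nabla u(\sigma)\,\ud\sigma\Big\|_{X^{(4,3)}[0,T]}\lesssim_{\,\eps}T^{\theta_i}\|A_i\|_{L^{a_i}_TL^{b_i}_x}\|u\|_{X^{(4,3)}[0,T]},
\]
where $\theta_i$ is the minimum among the four positive exponents produced by Corollary \ref{co:stri_lo} (of the form $\tfrac74-\tfrac{1}{s_i}-\tfrac{3}{2p_i}$ and $\tfrac54-\tfrac{1}{s_i}-\tfrac{3}{2p_i}$), each strictly positive by the strict inequality in \eqref{est_dual_loc}--\eqref{est_grad_dual_loc}. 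Setting $\theta_A:=\min\{\theta_1,\theta_2\}>0$ and summing over $i$ concludes the proof.

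The only real check, which I expect to be routine rather than obstructive, is that the grad-admissible pair $(s_i,p_i)$ chosen in Lemma \ref{le:abcd}(i) indeed lies in the admissibility ranges of Corollary \ref{co:stri_lo} for both $(q,r)=(\infty,2)$ and $(q,r)=(4,3)$; this follows from the transitivity statement in Remark \ref{re:rel_adm}. No fixed-point argument or delicate estimate is needed at this stage; the lemma is essentially a packaging of Lemma \ref{le:abcd}(i) and Corollary \ref{co:stri_lo}.
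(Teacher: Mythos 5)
Your proof is correct and follows essentially the same approach as the paper's: decompose $A=A_1+A_2$, invoke Lemma~\ref{le:abcd}(i) to place each $A_i\cdot\nabla u$ in a $(4,3)$-grad-admissible $L^{s_i}_TL^{p_i}_x$ space, then feed that into Corollary~\ref{co:stri_lo} and take $\theta_A=\min_i\big(\tfrac54-\tfrac1{s_i}-\tfrac3{2p_i}\big)$. You are a bit more explicit than the paper in noting that one must control all four pieces of the $X^{(4,3)}$ norm (gradient and non-gradient, for both $(\infty,2)$ and $(4,3)$) and in invoking Remark~\ref{re:rel_adm} to transfer admissibility down to $(\infty,2)$, but this is exactly what the paper's terse ``the thesis follows'' is sweeping under the rug, and since the $\tfrac74$-exponents from Corollary~\ref{co:stri_lo}(i) dominate the $\tfrac54$-exponents, the minimum reduces to the paper's formula.
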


\begin{proof}
Because of Lemma \ref{le:abcd}(i),
\begin{equation*}
A_i\cdot\nabla u\in L^{s_i}([0,T],L^{p_i}(\R^3))\,,\qquad i\in \{1,2\}\,,
\end{equation*}
for some $(s_1,p_1)$, $(s_2,p_2)$ which are $(4,3)$-grad-admissible pairs. Applying Corollary \ref{co:stri_lo}(ii) and Lemma \ref{le:abcd}(i) to $A_i\cdot\nabla u$  and setting 
\begin{equation*}
\theta_{\!A}\;:=\;\min\left\{\frac54-\frac{1}{s_1}-\frac{3}{2p_1},\frac54-\frac{1}{s_2}-\frac{3}{2p_2}\right\}
\end{equation*}
the thesis follows.
\end{proof}

\begin{lemma}\label{le:Vcd_grad}
Let $A\in\widetilde{\mathcal A}_1$ or $A\in\widetilde{\mathcal A}_2$, and let $\eps>0$.
There exists a constant $\theta_{\!A}>0$ such that, for every $T\in(0,1]$,
\begin{equation*}
\left\Vert\int_0^te^{(\ii+\eps)(t-\sigma)\Delta}|A(\sigma)|^2u(\sigma)\,\ud\sigma\right\Vert_{X^{(4,3)}[0,T]}\;\lesssim_{\,\eps,A}\; T^{\theta_{\!A}}\Vert u\Vert_{X^{(4,3)}[0,T]}\,.
\end{equation*}
\end{lemma}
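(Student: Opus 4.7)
The plan is to mimic the structure of the proof of Lemma \ref{le:Vab_grad}, but adapted to the quadratic term $|A|^2u$, and to separate the argument into the two cases $A\in\widetilde{\mathcal A}_1$ and $A\in\widetilde{\mathcal A}_2$ since the available regularity of $A$ is different. First, I would expand $|A|^2u=\sum_{i,j\in\{1,2\}}A_i\cdot A_j\,u$ and bound each of the four contributions separately. To control the full $X^{(4,3)}[0,T]$ norm I will apply Corollary \ref{co:stri_lo} with two different admissible pairs, namely $(q,r)=(\infty,2)$ (which gives the $L^\infty_tH^1_x$ part) and $(q,r)=(4,3)$ (which gives the $L^4_tW^{1,3}_x$ part); both are non-endpoint admissible pairs.

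In the case $A\in\widetilde{\mathcal A}_1$, Lemma \ref{le:abcd}(ii) provides $(4,3)$-grad-admissible pairs $(s_{ij},p_{ij})$ such that
\[
 \|A_i\cdot A_j\,u\|_{L^{s_{ij}}([0,T],L^{p_{ij}}(\R^3))}\;\lesssim\;\|A_i\|_{L^{a_i}L^{b_i}}\|A_j\|_{L^{a_j}L^{b_j}}\|u\|_{X^{(4,3)}[0,T]}.
\]
Since these pairs are in particular $(4,3)$-admissible (Remark \ref{re:rel_adm}), and since $2<3$ they are also $(\infty,2)$-admissible and $(\infty,2)$-grad-admissible by the monotonicity statement in Remark \ref{re:rel_adm}, Corollary \ref{co:stri_lo}(i) and (ii) together bound both the Duhamel integral and its gradient in $L^\infty_tL^2_x\cap L^4_tL^3_x$, with a constant of the form $T^{\theta_{ij}}$ where $\theta_{ij}>0$ is strictly positive.

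In the case $A\in\widetilde{\mathcal A}_2$, the available bound from Lemma \ref{le:abcd}(iii) is only on $(4,3)$-admissible (not grad-admissible) pairs, so Corollary \ref{co:stri_lo}(ii) does not apply directly. However, the same lemma provides the \emph{full} $W^{1,p_{ij}}$-norm of $A_i\cdot A_j\,u$. Since the heat-Schr\"odinger propagator commutes with $\nabla$, I would write
\[
 \nabla\!\int_0^t e^{(\ii+\eps)(t-\sigma)\Delta}A_i(\sigma)\cdot A_j(\sigma)\,u(\sigma)\,\ud\sigma\;=\;\int_0^t e^{(\ii+\eps)(t-\sigma)\Delta}\nabla\bigl(A_i(\sigma)\cdot A_j(\sigma)\,u(\sigma)\bigr)\,\ud\sigma,
\]
so that Corollary \ref{co:stri_lo}(i), applied once to $A_i\cdot A_j\,u$ and once to $\nabla(A_i\cdot A_j\,u)$, with $(q,r)=(\infty,2)$ and $(q,r)=(4,3)$ in each case, yields the four needed bounds, again with a factor $T^{\theta_{ij}}$ for some $\theta_{ij}>0$.

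Finally, I would set $\theta_{\!A}$ to be the minimum of the finitely many exponents $\theta_{ij}$ produced above; this is strictly positive, and summing the four contributions gives the claim. The proof is essentially bookkeeping given the preparatory lemmas; the only mildly delicate point -- and the closest thing to an obstacle -- is verifying that the admissibility exponents from Lemma \ref{le:abcd} are compatible with both $(q,r)=(4,3)$ and $(q,r)=(\infty,2)$, which is precisely the content of Remark \ref{re:rel_adm}, and organising the $\widetilde{\mathcal A}_2$ case so that no derivative needs to be moved from $A$ onto $u$ (which is ensured by exploiting the commutation between $\nabla$ and the propagator together with the $W^{1,p_{ij}}$ control from Lemma \ref{le:abcd}(iii)).
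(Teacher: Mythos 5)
Your proof is correct and follows essentially the same approach as the paper's (which is very terse, merely indicating that one should expand $|A|^2=\sum_{i,j}A_i\cdot A_j$, use Lemma~\ref{le:abcd}(ii) in the $\widetilde{\mathcal A}_1$ case, and apply Corollary~\ref{co:stri_lo}). Your treatment is actually more careful than the paper's: you correctly note that both parts (i) and (ii) of Corollary~\ref{co:stri_lo} are needed to control the full $X^{(4,3)}$ norm (the paper cites only part (i), although (ii) is also required for the gradient component), and you explicitly spell out the $\widetilde{\mathcal A}_2$ case — where one commutes $\nabla$ with the propagator and uses the $W^{1,p_{ij}}$ control from Lemma~\ref{le:abcd}(iii) — which the paper leaves implicit behind ``similar to the previous one.''
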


\begin{proof}
The proof is similar to the previous one. For example, in the case $A\in\widetilde{\mathcal A}_1$, by Lemma \ref{le:abcd}(ii) we have
\begin{equation*}
A_i\cdot A_j u\in L^{s_{ij}}([0,T],L^{p_{ij}}(\R^3))\,,\qquad i,j\in\{1,2\}\,.
\end{equation*}
Then we apply Corollary \ref{co:stri_lo}(i).
\end{proof}

\section{The regularised magnetic Laplacian}\label{sec:propagator}

We discuss now the existence of the linear magnetic viscous propagator and we prove that, with our assumptions on the magnetic potential, the propagator enjoys the same Strichartz-type estimates for the heat-Schr\"odinger flow obtained already in the Subsection \ref{sec:smoothingestHSflow}.

% We consider the Cauchy problem
% \begin{equation}\label{lecpol}
% \left\{\begin{aligned}
% \ii\,\partial_t u\;=\;&-(1-\ii\,\eps)(\nabla-\ii A(t))^2u+F\\
% u(\tau)\;=\;&f
% \end{aligned}\right.
% \end{equation}
% for a given initial datum $f\in H^1(\mathbb{R}^3)$ and a given source term (or a sum of source terms) $F$, and we consider the corresponding integral formulation
% \begin{equation}\label{eq:int_epsA}
% u(t)\;=\;e^{(\ii+\eps)(t-\tau)\Delta}f-(\ii+\eps)\int_{\tau}^te^{(\ii+\eps)(t-\sigma)\Delta}(2\,\ii\,A(\sigma)\cdot\nabla u+|A(\sigma)|^2u+F)(\sigma)\,\ud\sigma\,,
% \end{equation}
% and show the existence of a unique solution $u\in C(\R_+;H^1)$ with suitable integrability properties. Clearly, for any $f\in H^1$, the unique solution to \eqref{eq:int_epsA} with $F\equiv0$ will define the magnetic viscous propagator $\mathcal U_{\eps, A}(t, \tau)f$. Moreover, we will infer the Strichartz-type estimates for $\mathcal U_{\eps, A}(t, \tau)$ from the integrability properties of the solution.

The main result of this Section is the following.

\begin{theorem}\label{th:energy_linear}
Assume that $A\in\widetilde{\mathcal{A}}_1$ or $A\in\widetilde{\mathcal{A}}_2$. For given $\mathcal{\tau}\in\mathbb{R}$, $\varepsilon>0$, and $f\in H^1(\mathbb{R}^3)$ 
consider the inhomogeneous Cauchy problem
\begin{equation}\label{eq:applin}
\left\{\begin{aligned}
\ii\,\partial_t u\;=\;&-(1-\ii\,\eps)(\Delta u-2\,\ii\,A\cdot\nabla u-|A|^2u)+F+G \\
u(\tau,\cdot)\;=\;&f \\
\end{aligned}\right.
\end{equation}
and the associated integral equation
\begin{equation}\label{duprima}
\begin{split}
 u&(t,\cdot)\;=\;e^{(\ii+\eps)(t-\tau)\Delta}f \\
 &\quad -\ii\!\int_{\tau}^t e^{(\ii+\eps)(t-\sigma)\Delta}\big((1-\ii\,\eps)(2\,\ii\,A\cdot\nabla u+|A|^2u)(\sigma)+F(\sigma)+G(\sigma)\big)\,\ud\sigma\,,
\end{split}
\end{equation}
where
\begin{itemize}
\item $F\in L^{\widetilde{s}}(\R,W^{1,\widetilde{p}}(\R^3))$ for some pair $(\widetilde{s},\widetilde{p})$ that is  $(4,3)$-admissible pair or satisfies \eqref{est_dual} with $(q,r)=(4,3)$, namely $\frac{2}{\widetilde{s}}+\frac{3}{\widetilde{p}}\leqslant\frac{7}{2}$, $\frac{1}{2}\leqslant\frac{1}{\widetilde{p}}<1$;
\item $G\in L^{s}(\R,L^{p}(\R^3))$, for some pair $(s, p)$ that is $(4, 3)$-grad-admissible or satisfies \eqref{est_grad_dual} with $(q,r)=(4,3)$, namely $\frac{2}{s}+\frac{3}{p}\leqslant\frac{5}{2}$, $\frac{1}{2}\leqslant\frac{1}{p}<\frac{2}{3}$.
\end{itemize}
Then there exists a unique solution $u\in\mathcal{C}([\tau,+\infty),H^1(\R^3))$ to \eqref{duprima}. Moreover, for any $T>\tau$ and for any Strichartz pair $(q,r)$, with $r\in[2,3]$,
\begin{equation}\label{stri_nono_grad}
  \Vert u\Vert_{L^q([\tau,T],W^{1,r}(\R^3))}\;\lesssim_{\,\eps,A,T}\Vert f\Vert_{H^1(\R^3)}+\Vert F\Vert_{L^{s}(\R,L^{p}(\R^3))} +\Vert G\Vert_{L^{\widetilde{s}}(\R,W^{1,\widetilde{p}}(\R^3))}\,.
\end{equation}
\end{theorem}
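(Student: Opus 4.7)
The plan is to solve \eqref{duprima} by a Banach fixed-point argument in the space $X^{(4,3)}[\tau,\tau+T]$ for sufficiently small $T>0$, and then to iterate over consecutive short intervals to reach arbitrary times. Without loss of generality take $\tau=0$, and define
\[
\Phi(u)(t)\;:=\;e^{(\ii+\eps)t\Delta}f - \ii\!\int_0^t e^{(\ii+\eps)(t-\sigma)\Delta}\Big((1-\ii\,\eps)\big(2\,\ii\,A(\sigma)\cdot\nabla u(\sigma)+|A(\sigma)|^2u(\sigma)\big)+F(\sigma)+G(\sigma)\Big)\,\ud\sigma.
\]
Since $\Phi$ is affine in $u$, it suffices to verify that (a) $\Phi(0)\in X^{(4,3)}[0,T]$ with a quantitative bound, and (b) the linear part of $\Phi$ is a strict contraction on $X^{(4,3)}[0,T]$ when $T$ is small.

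For (a), I estimate the homogeneous part $e^{(\ii+\eps)t\Delta}f$ using the $L^\infty_t L^2_x$ contractivity of the heat-Schr\"odinger semi-group and the homogeneous Strichartz bound \eqref{eq:homstrich} for the admissible pair $(4,3)$; since $\nabla$ commutes with $e^{(\ii+\eps)t\Delta}$, this upgrades at once to the $H^1$-level, yielding $\|e^{(\ii+\eps)t\Delta}f\|_{X^{(4,3)}[0,T]}\lesssim\|f\|_{H^1}$. The $F$-contribution is handled by the inhomogeneous retarded estimate \eqref{eq:st_st} (or \eqref{eq:st} when the equality case of \eqref{est_dual} holds), combining it once more with the commutation of $\nabla$ through the propagator to absorb the derivative into $F$; an analogous use of the $(q,r)=(\infty,2)$ variant controls the $L^\infty_tH^1_x$ norm. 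The $G$-contribution is dealt with similarly via the gradient-level estimate \eqref{eq:grst_st} (or \eqref{eq:grst}).

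For (b), the contraction is exactly the content of Lemmas \ref{le:Vab_grad} and \ref{le:Vcd_grad}, which together yield
\[
\|\Phi(u)-\Phi(v)\|_{X^{(4,3)}[0,T]}\;\lesssim_{\,\eps,A}\; T^{\theta_{\!A}}\|u-v\|_{X^{(4,3)}[0,T]},
\]
so that choosing $T$ small enough produces a strict contraction and Banach's theorem furnishes a unique local solution $u\in X^{(4,3)}[0,T]$. To extend $u$ globally, I iterate on consecutive intervals $[t_n,t_{n+1}]$: the equation is linear, so no $X^{(4,3)}$-blow-up can occur, and the admissible step length at stage $n$ depends only on $\eps$ and on $\|A_j\|_{L^{a_j}([t_n,t_{n+1}],L^{b_j})}$, which stays finite on any compact time window by the hypothesis $A_j\in L^{a_j}_{\mathrm{loc}}$. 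A standard bootstrap based on these local norms then forces $t_n\to+\infty$, and continuity $u\in\mathcal{C}([0,+\infty),H^1(\R^3))$ follows from the Duhamel formula via dominated convergence.

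Finally, for the Strichartz estimate \eqref{stri_nono_grad} with a generic admissible pair $(q,r)$, $r\in[2,3]$, I revisit the identity \eqref{duprima} and estimate each of its four terms in the norm $L^q_tW^{1,r}_x$: the homogeneous part by Proposition \ref{le:stri_hs}(i); the magnetic terms $A\cdot\nabla u$ and $|A|^2u$ via Corollary \ref{co:stri_lo} combined with Lemma \ref{le:abcd}, after inserting the already-controlled $X^{(4,3)}$-bound on $u$; and the source terms $F,G$ directly via \eqref{eq:st_st}--\eqref{eq:grst_st}. The main subtlety I expect is the calibration of admissible exponents throughout the argument: the dual pairs produced by Lemma \ref{le:abcd} must be simultaneously compatible with the admissibility ranges of Corollary \ref{co:stri_lo} and with the integrability of $A$ in $\widetilde{\mathcal{A}}_1\cup\widetilde{\mathcal{A}}_2$, which is precisely what motivates the use of the intermediate space $X^{(4,3)}$ (rather than an endpoint Strichartz norm) as the working space for the entire fixed-point scheme.
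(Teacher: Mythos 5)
Your proposal matches the paper's proof in all essentials: the same contraction in $X^{(4,3)}[\tau,\tau+T]$ via Lemmas~\ref{le:Vab_grad} and \ref{le:Vcd_grad}, the same exploitation of affineness so that the step length is independent of the data, and the same globalization by iteration, with the general $(q,r)$ Strichartz bound recovered by re-running Duhamel against the already-controlled $X^{(4,3)}$ norm. The only cosmetic difference is that you spell out the commutation of $\nabla$ through the propagator and the local-in-time integrability of $A$ a bit more explicitly; the substance is identical.
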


Theorem \ref{th:energy_linear} shows the existence of a unique solution $u$ to the integral equation \eqref{duprima}. From the assumptions on the magnetic potential and the source terms $F, G$ and by using standard arguments in the theory of evolution equations (see for example \cite{CH}) we may also infer that $u$ satisfies \eqref{eq:applin} for almost every $t\in\R$ in the sense of distributions. In the case when $F=G=0$, the solution $u$ to \eqref{eq:applin} defines an evolution operator, namely for any $f\in H^1(\R^3)$ the \emph{magnetic viscous evolution} is defined by $\mathcal U_{\eps, A}(t, \tau)f=u(t)$ where $u$ is the solution to \eqref{eq:applin} with $F=G=0$. As a consequence of Theorem \ref{th:energy_linear} we have that $\mathcal U_{\eps, A}(t, \tau)$ enjoys a class of Strichartz-type estimates.

\begin{proposition}\label{prop:prop}
The family $\{\mathcal U_{\eps, A}(t, \tau)\}_{t,\tau}$ of operators  on $H^1(\mathbb{R}^3)$  satisfies the following properties:
\begin{itemize}
\item $\mathcal U_{\eps, A}(t, s)\,\mathcal U_{\eps, A}(s, \tau)=\mathcal U_{\eps, A}(t, \tau)$ for any $\tau<s<t$;
\item $\mathcal U_{\eps, A}(t, t)=\mathbbm{1}$;
\item the map $(t,\tau)\mapsto\mathcal U_{\eps, A}(t, \tau)$ is strongly continuous in $H^1(\mathbb{R}^3)$;
\item for any admissible pair $(q, r)$ with $r\in[2, 3]$, and for any $F,G$ satisfying the same assumptions as in Theorem \ref{th:energy_linear}, one has
\begin{eqnarray}
 \Vert\mathcal{U}_{\eps,A}(t,\tau)f\Vert_{L^q([\tau,T],W^{1,r}(\R^3))}\!\!\!&\lesssim_{\,\eps,A,T}&\!\!\!\Vert f\Vert_{H^1(\R^3)} \label{eq:mahostg} \\
 \Big\Vert\int_\tau^t\!\mathcal{U}_{\eps,A}(t,\sigma)\,F(\sigma)\,\ud\sigma\,\Big\Vert_{L^q([\tau,T],W^{1,r}(\R^3))}\!\!\!&\lesssim_{\,\eps,A,T}&\!\!\!\Vert F\Vert_{L^{\widetilde{s}}([\tau,T],W^{1,\widetilde{p}}(\R^3))} \label{eq:mainhostg} \\
 \Big\Vert\int_\tau^t\!\mathcal{U}_{\eps,A}(t,\sigma)\,G(\sigma)\,\ud\sigma\,\Big\Vert_{L^q([\tau,T],W^{1,r}(\R^3))}\!\!\!&\lesssim_{\,\eps,A,T}&\!\!\!\Vert G\Vert_{L^s([\tau,T],L^{p}(\R^3))}\,. \label{eq:mainhostg2}
\end{eqnarray}
\end{itemize}
\end{proposition}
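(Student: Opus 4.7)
The plan is to deduce all four assertions directly from Theorem \ref{th:energy_linear}, whose existence, uniqueness, and quantitative bound \eqref{stri_nono_grad} for the integral equation \eqref{duprima} supply essentially everything that is needed.

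First, I would establish the composition law and the identity at coinciding times. Fix $\tau<s<t$ and $f\in H^1(\R^3)$, and set $v_1(r):=\mathcal U_{\eps,A}(r,\tau)f$ for $r\geqslant\tau$ and $v_2(r):=\mathcal U_{\eps,A}(r,s)\,v_1(s)$ for $r\geqslant s$. Exploiting the semigroup property of the heat-Schr\"odinger propagator $e^{(\ii+\eps)t\Delta}$, a direct manipulation of \eqref{duprima} (split $\int_\tau^t=\int_\tau^s+\int_s^t$) shows that both $v_1|_{[s,+\infty)}$ and $v_2$ solve the integral equation \eqref{duprima} on $[s,+\infty)$ with source terms $F=G=0$ and initial datum $v_1(s)$ at time $s$. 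The uniqueness part of Theorem \ref{th:energy_linear} then forces $v_1\equiv v_2$ on $[s,+\infty)$, which is exactly the composition law. The identity $\mathcal U_{\eps,A}(t,t)=\mathbbm{1}$ is read off \eqref{duprima} upon taking $t=\tau$.

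Next, I would address strong continuity of $(t,\tau)\mapsto\mathcal U_{\eps,A}(t,\tau)f$ in $H^1(\R^3)$. Formula \eqref{duprima} decomposes $u(t)=\mathcal U_{\eps,A}(t,\tau)f$ into a free piece $e^{(\ii+\eps)(t-\tau)\Delta}f$, which is jointly strongly continuous in $(t,\tau)$ by classical properties of the heat-Schr\"odinger semigroup, and a Duhamel integral whose $X^{(4,3)}[\tau,\tau+\delta]$-norm is of size $O(\delta^{\theta_{\!A}})$ as $\delta\downarrow 0$, thanks to Lemmas \ref{le:Vab_grad} and \ref{le:Vcd_grad} (with $\|u\|_{X^{(4,3)}}\lesssim_{\eps,A}\|f\|_{H^1}$ via \eqref{stri_nono_grad}). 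This yields continuity in $t$ for fixed $\tau$, and continuity in $\tau$ -- hence joint continuity -- then follows upon combining continuity in $t$ with the composition law just established.

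Finally, the three Strichartz-type estimates \eqref{eq:mahostg}, \eqref{eq:mainhostg}, \eqref{eq:mainhostg2} arise as specializations of the master bound \eqref{stri_nono_grad}: the homogeneous case is \eqref{eq:mahostg}, obtained with $F=G=0$; for \eqref{eq:mainhostg} I apply Theorem \ref{th:energy_linear} with initial datum $0$ and $G\equiv 0$, and a standard non-autonomous Duhamel argument identifies the resulting solution of \eqref{duprima} with $-\ii\int_\tau^t\mathcal U_{\eps,A}(t,\sigma)F(\sigma)\,\ud\sigma$ (the unimodular phase being immaterial for the norm bound); the completely symmetric choice $f=0$, $F\equiv 0$ and nonzero $G$ produces \eqref{eq:mainhostg2}. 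I foresee no genuine obstacle: Theorem \ref{th:energy_linear} encodes the bulk of the analytic work, and Proposition \ref{prop:prop} is essentially its reformulation in the language of the two-parameter magnetic viscous evolution $\mathcal U_{\eps,A}$.
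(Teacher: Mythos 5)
Your proof is correct and follows the paper's implicit approach: the paper does not spell out a proof of Proposition \ref{prop:prop} but simply presents it as a consequence of Theorem \ref{th:energy_linear}, and your argument fills in exactly the expected standard details (uniqueness of solutions to the integral equation \eqref{duprima} for the evolution/composition laws, the continuity coming from $u\in\mathcal C([\tau,+\infty),H^1)$ combined with the composition law and the small-time gain $T^{\theta_{\!A}}$, and the non-autonomous Duhamel identification for the inhomogeneous bounds). This is the intended reading and no step is problematic.
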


% 
% The unique solution $u$ to \eqref{duprima}  given by Theorem \ref{th:energy_linear} is clearly a weak $H^1$-solution to \eqref{eq:applin} in the sense of Definition \ref{de:ws_solution_alt}. Moreover, in view of the above Proposition we can also claim that $u$ satisfies the following integral equation
% 

Once we defined the magnetic viscous evolution operator $\mathcal U_{\eps, A}(t, \tau)$, we see that we can write the integral formulation for \eqref{eq:applin} in the following way
\begin{equation}\label{dusecondapp}
u(t)\;=\;\mathcal U_{\eps, A}(t, \tau)f-\ii\int_\tau^t\mathcal U_{\eps, A}(t, \sigma)\big(F(\sigma)+G(\sigma)\big)\,\ud \sigma\,.
\end{equation}
We will use formula \eqref{dusecondapp} and the Strichartz-type estimates \eqref{eq:mahostg}-\eqref{eq:mainhostg2} in order to set up a fixed point argument and show the existence of solutions to the nonlinear problem \eqref{eq:visc_CauMNLS}.

Let us now proceed with proving Theorem \ref{th:energy_linear}.
As already mentioned, the proof is based upon a contraction argument in the space introduced in Definition \ref{relevant} and requires the magnetic estimates established Lemmas \ref{le:abcd}, \ref{le:Vab_grad}, and \ref{le:Vcd_grad}.

\begin{proof}[Proof of Theorem \ref{th:energy_linear}]
It is clearly not restrictive to set the initial time $\tau=0$. For given $T\in(0,1]$ and $M>0$, we consider the ball of radius $M$ in $X^{(4, 3)}[0, T]$, i.e.,
$$\mathcal{X}_{T,M}\;:=\;\{u\in X^{(4,3)}[0,T]\;|\;\Vert u\Vert_{X^{(4,3)}[0,T]}\leqslant M\}.$$
Moreover, we define the solution map $u\mapsto\Phi u$ where, for $t\in[0,T]$,
\begin{equation}\label{eq:solution_map}
\begin{split}
 &(\Phi u)(t)\;:=\;e^{(\ii+\eps)t\Delta}f \\
 &\;-(\ii+\eps)\int_{0}^te^{(\ii+\eps)(t-\sigma)\Delta}\big((2\,\ii\,A(\sigma)\cdot\nabla+|A(\sigma)|^2)u(\sigma)+F(\sigma)+G(\sigma)\big)\,\ud\sigma\,.
\end{split}
\end{equation}
Thus, finding a solution to the integral equation \eqref{duprima}, with $\tau=0$, is equivalent to finding a fixed point for the map $\Phi$. We shall then prove Theorem \ref{th:energy_linear} by showing that, for suitable $T$ and $M$, the map $\Phi$ is a contraction on $\mathcal{X}_{T,M}$. To this aim, let us consider a generic $u\in \mathcal{X}_{T,M}$: owing to the Strichartz estimates \eqref{eq:homstrich} and \eqref{eq:grst} and to Lemmas \ref{le:Vab_grad} and \ref{le:Vcd_grad}, there exist positive constants $C\equiv C_{\eps,A}$ and $\theta\equiv\theta_{\!A}$ such that, for $T\in (0,1]$,
\begin{equation}\label{eq:bou_phi_g}
\begin{split}
 \Vert\Phi u\Vert_{X^{(4,3)}[0,T]}\;&\leqslant\; C\,\Big(\Vert f\Vert_{H^1(\R^3)}+\sum_{i=1}^N\|F_i\|_{L^{s_i}([0,T],L^{p_i}(\R^3)} \\
 &\qquad\qquad+\sum_{i=1}^N\|G_i\|_{L^{\widetilde{s}_i}([0,T],L^{\widetilde{p}_i}(\R^3)} +T^{\theta}\Vert u\Vert_{X^{(4,3)}[0,T]}\Big)\,.
\end{split}
\end{equation}
It is possible to restrict further $M$ and $T$ such that
\[
 M\;>\;2C\,\Big(\Vert f\Vert_{H^1(\R^3)}+\sum_{i=1}^N\|F_i\|_{L^{s_i}([0,T],L^{p_i}(\R^3)}+\sum_{i=1}^N\|G_i\|_{L^{\widetilde{s}_i}([0,T],L^{\widetilde{p}_i}(\R^3)}\Big)
\]
and $2CT^{\theta}<1$, in which case \eqref{eq:bou_phi_g} yields
$$ \Vert\Phi u\Vert_{X^{(4,3)}[0,T]}\;\leqslant\; M({\textstyle\frac{1}{2}}+CT^{\theta})\;<\;M\,.$$
This proves that $\Phi$ maps indeed $\mathcal{X}_{T,M}$ into itself. Next, for generic $u,v\in \mathcal{X}_{T,M}$, and with the above choice of $M$ and $T$, \eqref{eq:bou_phi_g} also yields
\begin{align*}
\Vert \Phi u-\Phi v\Vert_{X^{(4,3)}[0,T]}\;&=\;\Vert \Phi (u-v)\Vert_{X^{(4,3)}[0,T]}\;\leqslant\; CT^{\theta}\Vert u-v\Vert_{X^{(4,3)}[0,T]}\\
&<\;\frac12\Vert u-v\Vert_{X^{(4,3)}[0,T]}\,,
\end{align*}
which proves that $\Phi$ is indeed a contraction on $\mathcal{X}_{T,M}$. By Banach's fixed point theorem, we conclude that the integral equation $u=\Phi u$ has a unique solution in $\mathcal{X}_{T,M}$. Furthermore, $\Phi u\in \mathcal{C}([0,T],H^1(\R^3))$. Hence, we have found a local solution $u\in\mathcal{C}([0,T],H^1(\R^3))$ to the integral equation \eqref{duprima}, which satisfies \eqref{stri_nono_grad}. Moreover, since the local existence time $T$ does not depend on the initial data, this solution can be extended globally in time, and \eqref{stri_nono_grad} is satisfied for any $T>0$.
\end{proof}

As the last result of this Section, we show the propagator $\mathcal{U}_{\eps,A}(t,\tau)$ is stable under small perturbations of the magnetic potential and of the initial datum.

\begin{proposition}[Stability]\label{pr:stab_td}~
Let $\tau\in\R$, $T>\tau$, and let us assume that $A^{(1)}, A^{(2)}\in\widetilde{\mathcal{A}}_1$, with $\|A^{(1)}-A^{(2)}\|_{\mathcal A_1}<\delta$ or $A^{(1)}, A^{(2)}\in\widetilde{\mathcal{A}}_2$, with $\|A^{(1)}-A^{(2)}\|_{\mathcal A_2}<\delta$, where $\delta>0$ is sufficiently small. Let $u_1, u_2\in\mathcal C([\tau, T);H^1(\R^3))$
be the solutions to
%\begin{equation}\label{eq:stab_td}
%u_j(t)\;=\;\mathcal{U}_{\eps,A_j}(t,0)f_j-\ii\!\int_\tau^t\mathcal{U}_{\eps,A_j}(t,\sigma)\,F_j(\sigma)\,\ud\sigma\,,\qquad j\in\{1, 2\}
%\end{equation}
\begin{equation}\label{eq:stab_td}
\left\{\begin{aligned}
\ii\,\partial_t u_j\;=\;&-(1-\ii\,\eps)(\nabla-\ii A^{(j)})^2u_j+F_j \\
u(\tau,\cdot)\;=\;&\:f_j \\
\end{aligned}\right.
\end{equation}
for given $f_1, f_2\in H^1(\R^3)$ and given $F_1, F_2\in L^s([\tau,T],W^{1,p}(\R^3))$, where $(s,p)$ is dual-admissible. Then, for any admissible pair $(q,r)$ with $r\in [2,3]$ we have
\begin{equation*}
\|u_1-u_2\|_{L^q([\tau, T],W^{1, r}(\R^3))}\;\lesssim\;\delta+\|f_1-f_2\|_{H^1}+\|F_1-F_2\|_{L^s([\tau,T],W^{1,p}(\R^3))}\,.
\end{equation*}
\end{proposition}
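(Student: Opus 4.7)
The plan is to set $w := u_1-u_2$ and view it as the solution to a single viscous linear magnetic Schrödinger equation with potential $A^{(1)}$, whose forcing collects the discrepancy between the two magnetic Laplacians applied to $u_2$ plus the mismatch of the sources. Writing $B:=A^{(1)}-A^{(2)}$, a direct algebraic computation gives
\begin{equation*}
(\nabla-\ii A^{(1)})^2 u_1-(\nabla-\ii A^{(2)})^2 u_2\;=\;(\nabla-\ii A^{(1)})^2 w-2\,\ii\,B\cdot\nabla u_2-B\cdot(A^{(1)}+A^{(2)})\,u_2\,,
\end{equation*}
so that $w$ satisfies
\begin{equation*}
\ii\,\partial_t w\;=\;-(1-\ii\eps)(\nabla-\ii A^{(1)})^2 w+(1-\ii\eps)\bigl[2\,\ii\,B\cdot\nabla u_2+B\cdot(A^{(1)}+A^{(2)})\,u_2\bigr]+F_1-F_2
\end{equation*}
with $w(\tau)=f_1-f_2$.

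Applying Duhamel's formula with respect to the magnetic viscous propagator $\mathcal U_{\eps,A^{(1)}}$ and then the Strichartz-type bounds \eqref{eq:mahostg}--\eqref{eq:mainhostg2} of Proposition \ref{prop:prop}, for every admissible pair $(q,r)$ with $r\in[2,3]$ one would get
\begin{equation*}
\|w\|_{L^q([\tau,T],W^{1,r}(\R^3))}\;\lesssim_{\,\eps,A^{(1)},T}\;\|f_1-f_2\|_{H^1}+\|F_1-F_2\|_{L^s([\tau,T],W^{1,p}(\R^3))}+\mathcal R\,,
\end{equation*}
where $\mathcal R$ bundles the Strichartz norms of the two magnetic discrepancy terms $2\,\ii(1-\ii\eps)B\cdot\nabla u_2$ and $(1-\ii\eps)B\cdot(A^{(1)}+A^{(2)})\,u_2$ in norms for which \eqref{eq:mainhostg} or \eqref{eq:mainhostg2} is applicable.

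The remaining step is to estimate $\mathcal R$ by means of Lemma \ref{le:abcd} applied to $B$. Part (i) places $B\cdot\nabla u_2$ in $L^{s'}L^{p'}$ for a suitable $(4,3)$-grad-admissible pair with norm $\lesssim\|B\|_{\mathcal A_j}\|u_2\|_{X^{(4,3)}[\tau,T]}$, so that \eqref{eq:mainhostg2} controls its Duhamel integral. For the quadratic error I would split: in the case $A^{(1)},A^{(2)}\in\widetilde{\mathcal A}_1$ the bilinear analogue of Lemma \ref{le:abcd}(ii) (whose proof only uses Hölder and Sobolev and is thus insensitive to whether the two potentials coincide) lands the product in a $(4,3)$-grad-admissible $L^{s''}L^{p''}$, so that \eqref{eq:mainhostg2} applies again; in the case $A^{(1)},A^{(2)}\in\widetilde{\mathcal A}_2$ the bilinear analogue of Lemma \ref{le:abcd}(iii) places the product in a $(4,3)$-admissible $L^{s''}W^{1,p''}$ and one invokes \eqref{eq:mainhostg} instead. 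In either case the resulting bound is $\lesssim\|B\|_{\mathcal A_j}\bigl(\|A^{(1)}\|_{\mathcal A_j}+\|A^{(2)}\|_{\mathcal A_j}\bigr)\|u_2\|_{X^{(4,3)}[\tau,T]}$.

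Since $\|B\|_{\mathcal A_j}<\delta$, both contributions to $\mathcal R$ carry the prefactor $\delta$ multiplied by constants depending on $\eps,T,\|A^{(1)}\|_{\mathcal A_j},\|A^{(2)}\|_{\mathcal A_j}$ and on $\|u_2\|_{X^{(4,3)}[\tau,T]}$, the latter being finite by \eqref{stri_nono_grad} of Theorem \ref{th:energy_linear} in terms of $f_2$ and $F_2$. Substituting into the Duhamel estimate completes the argument. The main technical nuisance is the bookkeeping of admissible pairs, i.e.\ verifying that those supplied by Lemma \ref{le:abcd} match the ranges allowed in Proposition \ref{prop:prop}; the $\widetilde{\mathcal A}_2$ case is slightly more delicate because the quadratic correction has to be routed through the gradient-type Strichartz bound \eqref{eq:mainhostg} rather than \eqref{eq:mainhostg2}.
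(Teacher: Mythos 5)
Your proposal is correct and takes essentially the same route as the paper: the paper also sets $\widetilde u = u_1-u_2$, $\widetilde A = A^{(1)}-A^{(2)}$, rewrites the difference equation in the form $\ii\partial_t\widetilde u = -(1-\ii\eps)(\nabla-\ii A^{(1)})^2\widetilde u + 2\ii\widetilde A\cdot\nabla u_2 + \widetilde A\cdot(A^{(1)}+A^{(2)})u_2 + \widetilde F$, writes the Duhamel formula in terms of $\mathcal U_{\eps,A^{(1)}}$, controls $\|u_2\|_{X^{(4,3)}[\tau,T]}$ via \eqref{stri_nono_grad}, and closes using the Strichartz bounds of Proposition \ref{prop:prop} together with Lemma \ref{le:abcd}. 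Your extra remark that the product $\widetilde A\cdot(A^{(1)}+A^{(2)})u_2$ needs a bilinear version of Lemma \ref{le:abcd}(ii)/(iii) (which holds by the same H\"older--Sobolev argument) is a fair point of care that the paper applies implicitly.
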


\begin{proof} We prove the Proposition under the assumptions $A^{(1)}, A^{(2)}\in\mathcal A_1$ and $\|A^{(1)}-A^{(2)}\|_{\mathcal A_1}<\delta$, the other case being completely analogous.
From \eqref{eq:stab_td} we infer that the function $\widetilde u:=u_1-u_2$ satisfies
\begin{equation*}
\left\{\begin{aligned}
\ii\,\partial_t \widetilde u\;=\;&-(1-\ii\,\eps)(\nabla-iA^{(1)})^2\widetilde u+2\,\ii\widetilde A\cdot\nabla u_2+\widetilde A\cdot(A^{(1)}+A^{(2)})u_2+\widetilde F \\
\widetilde u(\tau,\cdot)\;=\;&\:\widetilde f \\
\end{aligned}\right.
\end{equation*}
or equivalently
\begin{equation}\label{eq:diff}
\begin{split}
 \widetilde u(t)\;&=\;\mathcal U_{\eps, A^{(1)}}(t,0)\widetilde f \\
 &\quad -\ii\int_\tau^t\mathcal U_{\eps, A^{(1)}}(t, \sigma)\big(2\,\ii\widetilde A\cdot\nabla u_2+\widetilde A\cdot(A^{(1)}+A^{(2)})u_2+\widetilde F\big)(\sigma)\,\ud \sigma\,,
\end{split}
\end{equation}
where $\widetilde f:=f_1-f_2$, $\widetilde A:=A_1-A_2$, and $\widetilde F:=F_1-F_2$. Since $u_1$ and $u_2$ solve \eqref{eq:stab_td} on the time interval $[\tau, T]$, estimate \eqref{stri_nono_grad}  yields
\begin{equation*}
\|u_j\|_{L^q([\tau, T];W^{1, r}(\R^3))}\;\leqslant\; C(\|f_j\|_{H^1}, \|A^{(j)}\|_{\mathcal A_1}, \|F_j\|_{L^s([\tau,T],W^{1,p}(\R^3))})\,,\quad j\in\{1, 2\}
\end{equation*}
for any admissible pair $(q, r)$ with $r\in[2, 3]$. 
%The result then follows from 
By applying the Strichartz-type estimates stated in Proposition \ref{prop:prop} and the estimates of  Lemma \ref{le:abcd} to equation \eqref{eq:diff} we have
\begin{equation*}
\begin{aligned}
\|\tilde u&\|_{L^q([\tau, T],W^{1, r}(\R^3))}\;\lesssim\;\|\widetilde{f}\|_{H^1}+\|\widetilde{A}\|_{\mathcal A_1}\|u_2\|_{X^{(4, 3)}[\tau, T]}\\
&+\|\widetilde{A}\|_{\mathcal A_1}\left(\|A^{(1)}\|_{\mathcal A_1}+\|A^{(2)}\|_{\mathcal A_1}\right)\|u_2\|_{X^{(4, 3)}[\tau, T]}+\|\tilde F\|_{L^s([\tau,T],W^{1,p}(\R^3))}\,,
\end{aligned}
\end{equation*}
from which the result follows.
\end{proof}

\section{Local well posedness for the regularised magnetic NLS}\label{sec:LWP}

In this Section we turn our attention to the non-linear problem \eqref{eq:visc_CauMNLS}. Using the existence result and the Strichartz-type estimates established, respectively, in Theorem \ref{th:energy_linear} and Proposition \ref{prop:prop}, we set up our fixed point argument associated with the integral equation
\begin{equation}\label{eq:visc_nls_int}
u(t)\;=\;\mathcal{U}_{\eps,A}(t,0)f-\ii\!\int_0^t\mathcal{U}_{\eps,A}(t,\sigma)\,\mathcal{N}(u)(\sigma)\,\ud\sigma\,.
\end{equation}

We first focus on the case of energy sub-critical non-linearities.

\begin{proposition}[Local well-posedness, energy sub-critical case]\label{pr:exsub}
Let $\eps>0$. Assume that $A\in\widetilde{\mathcal A}_1$ or $A\in\widetilde{\mathcal A}_2$ and that the exponents in the non-linearity \eqref{hart_pp_nl} are in the regime $\gamma\in(1,5)$ and $\alpha\in(0,3)$. Then for any $f\in H^1(\mathbb{R}^3)$ there exists a unique solution $u\in\mathcal C([0, T_{max}),H^1(\mathbb{R}^3))$ to \eqref{eq:visc_nls_int} on a maximal interval $[0, T_{max})$ such that the following blow-up alternative holds: if $T_{max}<+\infty$ then $\lim_{t\uparrow T_{max}}\|u(t)\|_{H^1}=+\infty$. 
\end{proposition}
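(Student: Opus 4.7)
The plan is to set up a contraction mapping for the integral equation \eqref{eq:visc_nls_int}. I would work with the space introduced in Definition \ref{relevant} and define
\[
(\Phi u)(t)\;:=\;\mathcal{U}_{\eps,A}(t,0)f\,-\,\ii\!\int_0^t\mathcal{U}_{\eps,A}(t,\sigma)\,\mathcal{N}(u)(\sigma)\,\ud\sigma
\]
on a closed ball $\mathcal{X}_{T,M}:=\{u\in X^{(4,3)}[0,T]\,|\,\|u\|_{X^{(4,3)}[0,T]}\le M\}$, exactly as in the proof of Theorem \ref{th:energy_linear}. The homogeneous bound \eqref{eq:mahostg} controls the free evolution term by $\|f\|_{H^1(\R^3)}$, so the whole task reduces to estimating the Duhamel integral by means of \eqref{eq:mainhostg}--\eqref{eq:mainhostg2}, which in turn requires placing $\mathcal{N}(u)$ in a function space compatible with the admissibility conditions spelled out in Theorem \ref{th:energy_linear}.

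The next step is to show that the pure power term $\mathcal{N}_1(u)=|u|^{\gamma-1}u$ satisfies a nonlinear estimate of the form
\[
\|\mathcal{N}_1(u)\|_{L^{\widetilde s}([0,T],W^{1,\widetilde p}(\R^3))}\;\lesssim\; T^{\theta_1}\,\|u\|^{\gamma}_{X^{(4,3)}[0,T]}
\]
for a suitable pair $(\widetilde s,\widetilde p)$ meeting the assumptions of Proposition \ref{prop:prop} and some $\theta_1>0$. The chain rule $|\nabla\mathcal{N}_1(u)|\lesssim|u|^{\gamma-1}|\nabla u|$, combined with H\"older's inequality and the embeddings \eqref{int_Str}--\eqref{int_Str_emb}, reduces the bound to an arithmetic check on the exponents: the sub-critical condition $\gamma<5$ leaves enough room to place the chosen pair in the interior of the admissibility region \eqref{est_dual_loc}, so that a strictly positive power $T^{\theta_1}$ appears via \eqref{eq:st_st}--\eqref{eq:grst_st}. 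An analogous computation handles the Hartree term $\mathcal{N}_2(u)=(|\cdot|^{-\alpha}\ast|u|^2)u$: one invokes the Hardy-Littlewood-Sobolev inequality to dispose of the convolution kernel, and the sub-criticality $\alpha<3$ again leaves the indices in the interior of the admissible range, yielding $T^{\theta_2}$ with $\theta_2>0$.

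Running the same estimates on the differences $\mathcal{N}(u)-\mathcal{N}(v)$, which are pointwise controlled by a polynomial in $u,v,\nabla u,\nabla v$ times $|u-v|+|\nabla(u-v)|$, one obtains $\|\Phi u-\Phi v\|_{X^{(4,3)}[0,T]}\le C\,T^{\theta}\|u-v\|_{X^{(4,3)}[0,T]}$ with $C=C(\eps,A,M)$ and $\theta>0$. Choosing $M\sim\|f\|_{H^1(\R^3)}$ and then $T$ sufficiently small, depending only on $\|f\|_{H^1}$, $\eps$, and the norms of $A$, makes $\Phi$ a contraction on $\mathcal{X}_{T,M}$; the unique fixed point is the required local $H^1$-solution, and the continuity in time inherited from the Strichartz bounds places it in $\mathcal{C}([0,T],H^1(\R^3))$. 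The extension to a maximal interval $[0,T_{\max})$ is obtained by iterating the scheme from any interior time $t_0$ with initial datum $u(t_0)$, and the blow-up alternative follows by the standard contradiction argument: if $T_{\max}<+\infty$ and $\|u(t)\|_{H^1}$ remained bounded on $[0,T_{\max})$, the local existence time starting from $t_0\to T_{\max}^-$ would be bounded below by a positive constant, allowing a strict extension past $T_{\max}$. The main technical obstacle I foresee is the simultaneous bookkeeping of admissibility indices, so that the chosen $(\widetilde s,\widetilde p)$ and $(s,p)$ fit the hypotheses of Proposition \ref{prop:prop} at once for the power and the Hartree nonlinearity, particularly in the regime where $\gamma$ approaches the energy-critical exponent $5$ from below.
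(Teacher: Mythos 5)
Your proposal is correct and follows essentially the same route as the paper: a Banach fixed-point argument for \eqref{eq:visc_nls_int} using the Strichartz-type estimates of Proposition \ref{prop:prop}, with the strict sub-criticality $\gamma<5$, $\alpha<3$ supplying the $T^\theta$ factor ($\theta>0$) that closes the contraction, followed by the standard continuation and blow-up arguments. The only cosmetic difference is that you contract in $X^{(4,3)}[0,T]$ as in Theorem \ref{th:energy_linear}, whereas the paper contracts in the $\gamma,\alpha$-tailored space $\mathcal{C}([0,T],H^1)\cap L^{q(\gamma)}W^{1,r(\gamma)}\cap L^{q(\alpha)}W^{1,r(\alpha)}$ with $r(\gamma),r(\alpha)\in[2,3)$; since $X^{(4,3)}[0,T]$ dominates all those norms via \eqref{int_Str}--\eqref{int_Str_emb}, the two choices give equivalent control once the exponent bookkeeping you flag is carried out.
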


\begin{proof}
Since the linear propagator $\mathcal U_{\eps, A}(t, \tau)$ satisfies the same Strichartz-type estimates as the heat-Schr\"odinger flow, and since the non-linearities considered here are sub-critical perturbation of the linear flow, a customary contraction argument in the space 
\begin{equation}\label{eq:cont_space}
\mathcal{C}([0,T],H^1(\R^3))\cap L^{q(\gamma)}([0,T],W^{1,r(\gamma)}(\R^3))\cap  L^{q(\alpha)}([0,T],W^{1,r(\alpha)}(\R^3))\,,
\end{equation}
where
\begin{equation}
(q(\gamma),r(\gamma))\;:=\;\textstyle\Big(\frac{4(\gamma+1)}{\gamma-1},\frac{3(\gamma+1)}{\gamma+2}\Big)
\end{equation}
(see, e.g., \cite[Theorems 2.1 and 3.1]{Miao-Hartree-2007}) 
and
\begin{equation}\label{qalpha}
(q(\alpha),r(\alpha))\;:=\; \begin{cases}
\quad\;(+\infty,2)&\;\;\alpha\in(0,2]\\
\Big(\frac{6}{\alpha-2},\frac{18}{13-2\alpha}\Big)&\;\;\alpha\in(2,3)\\
\end{cases}
\end{equation}
(see, e.g., \cite[Section 5.2]{Lineares-Ponce-book2015}),
guarantees the existence of a unique local solution for sufficiently small $T$. We observe, in particular, that with the above choice one has $r(\gamma),r(\alpha)\in[2,3)$. Furthermore, by a customary continuation argument we can extend such a solution over a maximal interval for which the blow-up alternative holds true. We omit the standard details, they are part of the well-established theory of semi-linear equations.
\end{proof}

In the presence of a energy-critical non-linearity ($\gamma=5$) the above arguments cannot be applied. Indeed, when $\gamma=5$ we cannot apply Corollary \ref{co:stri_lo} with that nonlinearity, in order to obtain the factor $T^\theta$, $\theta>0$ and apply the standard contraction argument. However, it is possible to exploit a similar idea as in \cite{Cazenave-Weissler-1990} to infer a local well-posedness result when $\gamma=5$.
%However, we can prove the following.

\begin{proposition}[Local existence and uniqueness, energy critical case]\label{pr:crie}
Let $A\in\widetilde{\mathcal A}_1$ or $A\in\widetilde{\mathcal A}_2$ and let the exponents in the non-linearity \eqref{hart_pp_nl} be in the regime $\gamma=5$ and $\alpha\in(0,3)$. Let $\eps>0$ and $f\in H^1(\R^3)$. There exists $\eta_0>0$ such that, if 
\begin{equation}
\|\nabla e^{\ii t\Delta}f\|_{L^6([0,T],L^{\frac{18}{7}}(\R^3))}\leqslant \eta
\end{equation}
for some (small enough) $T>0$ and some $\eta<\eta_0$, then there exists a unique solution $u\in\mathcal C([0, T],H^1(\mathbb{R}^3))$ to \eqref{eq:visc_nls_int}.
% such that
%\begin{eqnarray}
% \|\nabla u\|_{L^6([0,T],L^{\frac{18}{7}}(\R^3))}\!\!&\leqslant&\!\! 2\,\eta \\
% \|u\|_{L^q([0,T],L^{r}(\R^3))}\!\!&\lesssim&\!\! \|f\|_{L^2(\R^3)} \\
% \|\nabla u\|_{L^q([0,T],L^{r}(\R^3))}\!\!&\lesssim&\!\! \|\nabla f\|_{L^2(\R^3)}+\eta^5\,.
%\end{eqnarray}
Moreover, this solution can be extended on a maximal interval $[0, T_{max})$ such that the following blow-up alternative holds true: $T_{max}<\infty$ if and only if $\Vert u\Vert_{L^6([0,T_{max}),L^{18}({R}^3))}=\infty$.
%Moreover, the following variant of the blow-up alternative holds: if $u$ is a solution on the interval $[0,T)$, then $u$ can be extended over an interval $[0,T')$ with $T'>T$ if and only if
%$$\Vert u\Vert_{L^6([0,T],L^{18}({R}^3))}\;<\;+\infty\,.$$
\end{proposition}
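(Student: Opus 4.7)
The plan is to run a Cazenave--Weissler type contraction for the energy-critical nonlinear Schr\"odinger equation, adapted to the viscous magnetic propagator $\mathcal{U}_{\eps,A}$. Since the critical nonlinearity $|u|^4u$ does not carry a $T^\theta$ smallness factor in any Strichartz-type estimate, smallness must come from the hypothesis on $\|\nabla e^{\ii t\Delta}f\|_{L^{6}_tL^{18/7}_x}$ combined with a ball of radius proportional to $\eta$ in that same norm.

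Concretely, I would take $(q(\alpha),r(\alpha))$ as in \eqref{qalpha} and set up the work space
\[
Y_T\;:=\;\mathcal C([0,T],H^1(\R^3))\cap L^6([0,T],W^{1,18/7}(\R^3))\cap L^{q(\alpha)}([0,T],W^{1,r(\alpha)}(\R^3)),
\]
together with the ball
\[
B_{M,\eta}\;:=\;\bigl\{u\in Y_T\,:\,\|u\|_{L^\infty_tH^1}+\|u\|_{L^{q(\alpha)}_tW^{1,r(\alpha)}_x}\le M,\ \|\nabla u\|_{L^{6}_tL^{18/7}_x}\le 2\eta\bigr\},
\]
on which I would study the Duhamel map $\Phi(u)(t):=\mathcal U_{\eps,A}(t,0)f-\ii\int_0^t\mathcal U_{\eps,A}(t,\sigma)\mathcal N(u)(\sigma)\,\ud\sigma$. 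Since the $\eps$-heat semigroup commutes with $\nabla$ and is $L^p$-contractive, $\|\nabla e^{(\ii+\eps)t\Delta}f\|_{L^{6}_tL^{18/7}_x}\le\|\nabla e^{\ii t\Delta}f\|_{L^{6}_tL^{18/7}_x}\le\eta$; writing $\mathcal U_{\eps,A}(t,0)f$ via Duhamel around $e^{(\ii+\eps)t\Delta}$ and invoking Lemmas \ref{le:Vab_grad}--\ref{le:Vcd_grad} together with Theorem \ref{th:energy_linear}, the magnetic correction is $O_{\eps,A,\|f\|_{H^1}}(T^{\theta_A})$, giving $\|\nabla\mathcal U_{\eps,A}(\cdot,0)f\|_{L^{6}_tL^{18/7}_x}\le\tfrac{3}{2}\eta$ for $T$ small enough.

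For the nonlinear part, I would split $\mathcal N(u)=|u|^4u+(|\cdot|^{-\alpha}*|u|^2)u$. Using $W^{1,18/7}(\R^3)\hookrightarrow L^{18}(\R^3)$ and H\"older,
\[
\|\nabla(|u|^4u)\|_{L^{2}_tL^{6/5}_x}\;\lesssim\;\|u\|_{L^{6}_tL^{18}_x}^{4}\|\nabla u\|_{L^{6}_tL^{18/7}_x}\;\lesssim\;\eta^5,
\]
with $(2,6/5)$ dual-admissible with respect to both $(6,18/7)$ and $(\infty,2)$, so Proposition \ref{le:stri_hs}(ii) applied to $\nabla\mathcal N(u)$ controls the critical contribution to $\|\Phi u\|_{L^{6}_tW^{1,18/7}_x}+\|\Phi u\|_{L^\infty_tH^1}$ by $C\eta^5$. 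The Hartree piece, being energy-subcritical, supplies a genuine factor $T^{\theta}$ via Corollary \ref{co:stri_lo}, with contribution $\le C_{A,M}T^{\theta}$. Assembling, for $u,v\in B_{M,\eta}$,
\[
\|\Phi u\|_{Y_T}\le\|f\|_{H^1}+C\eta^5+C_{A,M}T^{\theta},\qquad \|\Phi u-\Phi v\|_{Y_T}\le(C\eta^4+C_{A,M}T^{\theta})\|u-v\|_{Y_T}.
\]
Picking $M\asymp\|f\|_{H^1}$, then $\eta_0$ with $C\eta_0^4<\tfrac{1}{4}$, and finally $T$ small enough makes $\Phi$ a strict contraction on $B_{M,\eta}$, and Banach's theorem delivers the unique local solution.

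The main obstacle, as anticipated, is the critical nonlinear estimate: shrinking $T$ alone does not tame $|u|^4u$, so closing the contraction rests delicately on choosing $\eta_0$ small relative to the constants (depending on $\|f\|_{H^1}$, $\eps$, and $A$) that enter the Hartree and magnetic corrections. For the maximal interval $[0,T_{\max})$ and the blow-up alternative in terms of $\|u\|_{L^{6}([0,T_{\max}),L^{18})}$, I would argue by contradiction: if this quantity were finite with $T_{\max}<+\infty$, then on a tail interval $[T_{\max}-\delta,T_{\max})$ the $L^{6}L^{18}$ norm of $u$ is small; the Duhamel identity and Strichartz then yield Cauchy bounds showing $u(t)\to u^*$ in $H^1$ as $t\uparrow T_{\max}$. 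Restarting the local theorem at the datum $u^*$ with a suitably short time window, on which $\|\nabla e^{\ii t\Delta}u^*\|_{L^{6}_tL^{18/7}_x}<\eta_0$ by absolute continuity, produces a strict extension of $u$ beyond $T_{\max}$, contradicting maximality.
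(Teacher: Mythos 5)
Your proposal implements exactly the Cazenave--Weissler scheme that the paper invokes by reference to Killip--Visan (Theorem~3.4 and Corollary~3.5): the same contraction space $\mathcal C([0,T],H^1)\cap L^6_tW^{1,18/7}_x\cap L^{q(\alpha)}_tW^{1,r(\alpha)}_x$, the same use of the hypothesis $\|\nabla e^{\ii t\Delta}f\|_{L^6_tL^{18/7}_x}\leqslant\eta$ as the source of smallness for the critical term $|u|^4u$, the $T^\theta$ gain for the magnetic and Hartree contributions, and the standard continuity/restart argument for the $L^6_tL^{18}_x$ blow-up alternative. One numerical slip: the H\"older estimate
\[
\|\nabla(|u|^4u)\|_{L^{2}_tL^{6/5}_x}\;\lesssim\;\|u\|_{L^{6}_tL^{18}_x}^{4}\,\|\nabla u\|_{L^{6}_tL^{18/7}_x}
\]
has mismatched exponents, in time $\tfrac{4}{6}+\tfrac16=\tfrac56\neq\tfrac12$ and in space $\tfrac{4}{18}+\tfrac{7}{18}=\tfrac{11}{18}\neq\tfrac56$. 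The dual pair actually produced by this H\"older computation is $(s,p)=\bigl(\tfrac65,\tfrac{18}{11}\bigr)$, which does satisfy $\tfrac{2}{s}+\tfrac{3}{p}=\tfrac72$ with $\tfrac12\leqslant\tfrac1p\leqslant1$, hence is admissible in Proposition~\ref{le:stri_hs}(ii) relative to both $(6,\tfrac{18}{7})$ and $(\infty,2)$; with that correction the contraction closes exactly as you describe.
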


\begin{proof}
A direct application of a well-known argument by Cazenave and Weissler \cite{Cazenave-Weissler-1990} (we refer to \cite[~Section 3]{Killip-Visan-2013} for a more recent discussion). In particular, having established Strichartz estimates for $\mathcal{U}_{\eps,A}(t,\tau)$ relative to the pair $(q,r)=(6,\frac{18}{7})$, we proceed exactly as in the proof of \cite[Theorem 3.4 and Corollary 3.5]{Killip-Visan-2013},  so as to find a unique solution $u$ to the integral equation \eqref{eq:visc_nls_int} in the space
\begin{equation}\label{eq:cont_space_crit}
\mathcal{C}([0,T],H^1(\R^3))\cap L^{6}([0,T],W^{1,\frac{18}{7}}(\R^3))\cap  L^{q(\alpha)}([0,T],W^{1,r(\alpha)}(\R^3))
\end{equation}
with $(q(\alpha),r(\alpha))$ given by \eqref{qalpha}, together with the $L^6_tL^{18}_x$-blow-up alternative.
%Moreover, $\mathcal{N}(u)\in L^s([0,T],W^{1,p}(\R^3))$ for any dual Strichartz pair $(s,p)$. Hence, Lemma \ref{le:equi_solu} is applicable, whence $u$ is the unique strong $H^1$-solution to \eqref{eq:approx_CauMNLS}.
\end{proof}

We conclude this Section by stating the analogous stability property of Proposition \ref{pr:stab_td} also for the nonlinear problem
\begin{proposition}\label{prop:stab_nl}
Let $\tau\geq0$, $T\in(\tau, \infty)$ and let us assume that $A^{(1)}, A^{(2)}\in \widetilde{\mathcal{A}}_1$ with $\|A^{(1)}-A^{(2)}\|_{\mathcal A_1}<\delta$ 
or that  $A^{(1)},A^{(2)}\in \widetilde{\mathcal{A}}_2$ with $\|A^{(1)}-A^{(2)}\|_{\mathcal A_2}<\delta$, for some $\delta>0$ small enough. Let us consider $u_1, u_2\in\mathcal C([\tau, T];H^1(\R^3))$ solutions to 
\begin{equation*}
\left\{\begin{aligned}
\ii\,\partial_t u_j\;=\;&-(1-\ii\,\eps)(\nabla-iA^{(j)})^2u_j+\mathcal N(u_j) \\
u(\tau,\cdot)\;=\;&f_j, \\
\end{aligned}\right.
\end{equation*}
where $j\in\{1, 2\}$, $f_1, f_2\in H^1$, $\mathcal N(u)$ is given by \eqref{hart_pp_nl} with $\gamma\in(1, 5], \alpha\in(0, 3)$. Then for any admissible pair $(q, r)$ with $r\in[2, 3]$ we have
\begin{equation*}
\|u_1-u_2\|_{L^q([\tau, T],W^{1, r}(\R^3))}\;\lesssim\;\delta+\|f_1-f_2\|_{H^1}.
\end{equation*}
\end{proposition}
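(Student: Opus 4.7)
The plan is to adapt the stability argument of Proposition \ref{pr:stab_td} so as to accommodate the extra nonlinear contribution; I work under $A^{(1)},A^{(2)}\in\widetilde{\mathcal A}_1$, the other case being analogous. Setting $\widetilde{u}:=u_1-u_2$, $\widetilde{f}:=f_1-f_2$, and $\widetilde{A}:=A^{(1)}-A^{(2)}$, subtracting the two equations shows that $\widetilde u$ solves
\begin{equation*}
\ii\,\d_t\widetilde u\;=\;-(1-\ii\eps)(\nabla-\ii A^{(1)})^2\widetilde u+2\,\ii\,\widetilde A\cdot\nabla u_2+\widetilde A\cdot(A^{(1)}+A^{(2)})u_2+\mathcal N(u_1)-\mathcal N(u_2)
\end{equation*}
with initial datum $\widetilde f$, whence the Duhamel representation based on the propagator $\mathcal U_{\eps,A^{(1)}}$ constructed in Theorem \ref{th:energy_linear}.

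I would then apply the Strichartz-type estimates \eqref{eq:mahostg}--\eqref{eq:mainhostg2} of Proposition \ref{prop:prop}, together with the magnetic bounds of Lemma \ref{le:abcd}, which -- exactly as in the proof of Proposition \ref{pr:stab_td} -- absorb the two linear source terms into a contribution of order $\delta\cdot\|u_2\|_{X^{(4,3)}[\tau,T]}$, the latter norm being finite by Propositions \ref{pr:exsub} or \ref{pr:crie}. For the nonlinear difference $\mathcal N(u_1)-\mathcal N(u_2)$ I would exploit the pointwise inequality
\begin{equation*}
\bigl||u_1|^{\gamma-1}u_1-|u_2|^{\gamma-1}u_2\bigr|\;\lesssim\;(|u_1|^{\gamma-1}+|u_2|^{\gamma-1})\,|\widetilde u|
\end{equation*}
and the splitting
\begin{equation*}
(|\cdot|^{-\alpha}\!\ast|u_1|^2)u_1-(|\cdot|^{-\alpha}\!\ast|u_2|^2)u_2\;=\;\bigl(|\cdot|^{-\alpha}\!\ast(|u_1|^2-|u_2|^2)\bigr)u_1+\bigl(|\cdot|^{-\alpha}\!\ast|u_2|^2\bigr)\widetilde u\,,
\end{equation*}
and estimate both in the dual Strichartz spaces already employed in the proofs of Propositions \ref{pr:exsub} and \ref{pr:crie}, by means of H\"older and Hardy-Littlewood-Sobolev, paying the price of prefactors controlled by Strichartz norms of $u_1,u_2$ which are a priori finite on $[\tau,T]$.

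Partitioning $[\tau,T]$ into finitely many subintervals $[\tau_k,\tau_{k+1}]$ on which the above Strichartz prefactors and the time-length factor $\eta^\theta$ from Corollary \ref{co:stri_lo} are sufficiently small yields, on each piece, an inequality of the form
\begin{equation*}
\|\widetilde u\|_{Y_k}\;\leqslant\;C\bigl(\delta+\|\widetilde u(\tau_k)\|_{H^1}\bigr)+\tfrac12\|\widetilde u\|_{Y_k}\,,
\end{equation*}
where $Y_k$ denotes the relevant Strichartz space on $[\tau_k,\tau_{k+1}]$. Absorbing the last term on the right and iterating the estimate across subintervals delivers the stated control in $L^q([\tau,T],W^{1,r})$ for any admissible pair $(q,r)$ with $r\in[2,3]$. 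The main obstacle lies in the energy-critical case $\gamma=5$: the time-smallness factor $\eta^\theta$ is no longer available for the power nonlinearity, so the subdivision of $[\tau,T]$ must instead be governed by the absolute continuity of the norms $\|u_j\|_{L^6([\tau,T],L^{18})}$ (finite thanks to Proposition \ref{pr:crie}), which guarantees smallness on sufficiently short pieces and permits the same absorption-and-iteration scheme to close.
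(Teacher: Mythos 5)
The paper states Proposition \ref{prop:stab_nl} without proof, presenting it simply as the nonlinear analogue of Proposition \ref{pr:stab_td}, so there is no written argument to compare against. Your outline is, in essence, exactly what the authors evidently have in mind, and its structure is sound: Duhamel via $\mathcal U_{\eps,A^{(1)}}$, the two magnetic-difference source terms bounded by $\delta\,\|u_2\|_{X^{(4,3)}[\tau,T]}$ through Lemma \ref{le:abcd} and the estimates \eqref{eq:mahostg}--\eqref{eq:mainhostg2}, the nonlinear difference treated in dual Strichartz spaces and absorbed on a finite partition of $[\tau,T]$, with the absolute continuity of $\|u_j\|_{L^6_tL^{18}_x}$ in place of the time-smallness factor when $\gamma=5$.

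One point you should make explicit, since it is glossed over in the paper as well: to obtain the claimed $W^{1,r}$-control you must estimate $\nabla\bigl(|u_1|^{\gamma-1}u_1-|u_2|^{\gamma-1}u_2\bigr)$, which produces terms of the form $(|u_1|^{\gamma-1}-|u_2|^{\gamma-1})\nabla u_1$. These are Lipschitz in $\widetilde u$ only when $\gamma\geqslant 2$; for $\gamma\in(1,2)$ the map $z\mapsto|z|^{\gamma-1}$ is merely $(\gamma-1)$-H\"older, so the linear bound $\delta+\|f_1-f_2\|_{H^1}$ is not reachable by a direct contraction in $W^{1,r}$-based norms. One must either run the fixed-point argument in an $L^2$-based Strichartz metric and propagate $H^1$-regularity afterwards, \`a la Cazenave, or settle for the weaker H\"older dependence $\delta^{\gamma-1}+\|f_1-f_2\|_{H^1}^{\gamma-1}$ — which, incidentally, still suffices for the application of this proposition in the density argument of Proposition \ref{pr:5ene}, where only convergence to zero is used.
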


\section{Mass and energy estimates}\label{eq:mass-energy}

In this Section we establish some a priori estimates which will be needed in order to extend the local approximating solution obtained in Section \ref{sec:LWP} over arbitrary time intervals. In particular we will show that the total mass and energy are uniformly bounded. Furthermore, by exploiting the dissipative regularisation, we will infer some a priori space-time bounds which will allow to extend globally the solution also in the energy-critical case.

%In this Section we establish mass and energy a priori estimates for the equation \eqref{eq:visc_nls} and we prove a dissipation effect due to the parabolic regularisation. All this will be needed in order to extend globally in time the local solution to the regularised magnetic NLS obtained in Section \ref{sec:LWP}.

%From this point on we go back to our Assumptions \ref{hyp:assum2_alt}, that provide enough time-regularity for the magnetic potential so as to justify all the computations here.

The two quantities of interest are defined as follows.

\begin{definition}
Let $T>0$. For each $u\in L^{\infty}([0,T),H^1(\R^3))$ and $t\in [0,T)$, mass and energy of $u$ are defined, at almost every time $t\in[0,T)$, as
\[
 \begin{split}
  (\mathcal{M}(u))(t)\;&:=\;\int_{\R^3}|u(t,x)|^2\,\ud x \\
  (\mathcal{E}(u))(t)\;&:=\;\int_{\R^3}\Big({\textstyle{\frac{1}{2}}}|(\nabla-\ii A(t))\,u|^2+{\textstyle\frac{1}{\gamma +1}}|u|^{\gamma +1}+{\textstyle{\frac{1}{4}}}(|x|^{-\alpha}*|u|^2)|u|^2\Big)\,\ud x\,.
 \end{split}
\]
\end{definition}

In what follows, we will consider potentials $A\in\mathcal A_1$ or $A\in\mathcal A_2$, so to have the time regularity needed in order to study the energy functional.

\begin{proposition}\label{pr:5ene}
Assume that $A\in\mathcal{A}_1$ or $A\in\mathcal{A}_2$, and that the exponents in the non-linearity \eqref{hart_pp_nl} are in the whole regime $\gamma\in(1,5]$ and $\alpha\in(0,3)$. For fixed $\eps>0$, let $u_{\eps}\in\mathcal{C}([0,T),H^1(\R^3))$ be the local solution to the regularised equation \eqref{eq:visc_nls} for some $T>0$. Then the mass, the energy, and the $H^1$-norm of $u_{\eps}$ are bounded in time over $[0,T)$, uniformly in $\eps>0$, that is,
\begin{eqnarray}
 \sup_{t\in[0, T]}\mathcal{M}(u_{\eps})\!\!\!&\!\!\!\!\!\!\!\!\!\lesssim & \!\!\!\!1\label{eq:Mbound} \\
 \sup_{t\in[0, T]}\mathcal{E}(u_{\eps})\!\!&\lesssim_{A,T} &\!\!\!\! 1 \label{eq:Ebound}\\
 \Vert u_{\eps}\Vert_{L^{\infty}([0,T),H^1(\R^3))}\!\!\!&\lesssim_{A,T}&\!\!\!\!1\,, \label{eq:uni_bou}
\end{eqnarray}
and moreover one has the a priori bounds
\begin{equation}\label{dissiesti}
\begin{aligned}
\int_0^{T}\!\!\int_{\R^3}&\Big(|(\nabla-\ii A(t))u_{\eps}|^2\big(|u_{\eps}|^{\gamma-1}+(|x|^{-\alpha}*|u_{\eps}|^2)\big)+(\gamma-1)|u_{\eps}|^{\gamma-1}|\nabla|u_{\eps}||^2\\
&\qquad+(|x|^{-\alpha}*\nabla|u_{\eps}|^2)\nabla|u_{\eps}|^2\Big)\,\ud x\,\ud t\;\lesssim_{A,T}\;\varepsilon^{-1}\,.
\end{aligned}
\end{equation}
\end{proposition}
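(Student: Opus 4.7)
My plan is to derive the mass and energy identities by testing the equation against suitable quantities and then to close the estimates via Gr\"onwall's inequality, using the control of $\partial_t A$ afforded by the class $\mathcal R$. Throughout I will abbreviate $D(t) := \nabla - \ii A(t)$ and $\mathcal{H}'(u) := -D^2 u + \mathcal N(u)$.

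The mass estimate is obtained by pairing \eqref{eq:visc_nls} with $\bar u_\varepsilon$ in the $H^{-1}$--$H^1$ duality, using the formal self-adjointness of $-D^2$ and the fact that $\mathcal N(u)\bar u$ is pointwise real, and then taking imaginary parts:
$$
\tfrac12\tfrac{\ud}{\ud t}\|u_\varepsilon(t)\|_{L^2}^2 \;=\; -\varepsilon\,\|D(t)u_\varepsilon(t)\|_{L^2}^2 \;\le\; 0.
$$
This yields \eqref{eq:Mbound} and the auxiliary bound $\varepsilon\!\int_0^T\!\|Du_\varepsilon\|_{L^2}^2\,\ud t \le \tfrac12\|f\|_{L^2}^2$.

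For the energy I would differentiate $\mathcal E(u_\varepsilon)$. The $t$-dependence of $A$ enters only through the kinetic term, and starting from $|Du|^2 = |\nabla u|^2 - 2 A\cdot\mathrm{Im}(\bar u\nabla u) + |A|^2|u|^2$ a direct calculation gives
$$
\tfrac{\ud}{\ud t}\mathcal E(u_\varepsilon) \;=\; \mathrm{Re}\,\langle \partial_t u_\varepsilon,\,\mathcal{H}'(u_\varepsilon)\rangle \,-\, \mathrm{Im}\,\langle (\partial_t A)u_\varepsilon,\, D u_\varepsilon\rangle.
$$
From \eqref{eq:visc_nls} one reads $\partial_t u_\varepsilon = -(\ii+\varepsilon)\mathcal{H}'(u_\varepsilon) + \varepsilon\mathcal N(u_\varepsilon)$; inserting this and using $\mathcal{H}' - \mathcal N = -D^2 u$, integrating by parts via $\langle -D^2 v,w\rangle = \langle Dv,Dw\rangle$, and applying the product rule $D(Fu) = F\,Du + (\nabla F)u$ for real-valued $F$ together with $\mathrm{Re}(\bar u\,\nabla u) = \tfrac12\nabla|u|^2$, the first term on the right reduces to $-\varepsilon\|D^2 u_\varepsilon\|_{L^2}^2 - \varepsilon\,\mathcal D(u_\varepsilon)$, where $\mathcal D(u_\varepsilon)$ is precisely the non-negative integrand appearing on the left of \eqref{dissiesti}.

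To close the estimates, the source term is controlled by H\"older's inequality, Sobolev embedding (valid since $2b_j/(b_j-2)\in[2,6)$ for $b_j\in(3,+\infty]$), the diamagnetic inequality \eqref{eq:diam}, and the mass bound just obtained:
$$
|\mathrm{Im}\,\langle (\partial_t A)u_\varepsilon,\,Du_\varepsilon\rangle| \;\lesssim\; \|\partial_t A(t)\|_{L^{b_j}}\big(\mathcal M(u_\varepsilon) + \mathcal E(u_\varepsilon)\big).
$$
Since $A\in\mathcal R$ gives $\partial_t A\in L^1_{\mathrm{loc}}(\R,L^{b_j})$, Gr\"onwall then yields \eqref{eq:Ebound} uniformly in $\varepsilon$, and \eqref{eq:uni_bou} is immediate from \eqref{eq:Mbound}--\eqref{eq:Ebound}, the diamagnetic inequality, and Lemma \ref{le:norm_equiv}. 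Integrating the energy identity on $[0,T]$, using $\mathcal E(u_\varepsilon(T))\ge 0$ and dividing by $\varepsilon$, produces exactly \eqref{dissiesti}. The principal technical obstacle is justifying the formal differentiation of $\mathcal E(u_\varepsilon)$: the quantity $\|D^2 u_\varepsilon\|_{L^2}$ is a priori not finite for $u_\varepsilon\in\mathcal C([0,T),H^1)$. I would overcome this by exploiting the parabolic smoothing recorded in Proposition \ref{le:stri_hs}(iii), which bootstraps the Duhamel formula for \eqref{eq:visc_nls} into $u_\varepsilon \in L^q_t H^2_x$ on any $[\delta,T]\subset(0,T)$; alternatively one first regularises $(f,A)$, verifies the identities for the smooth approximants, and passes to the limit using Proposition \ref{prop:stab_nl}.
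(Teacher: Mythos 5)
Your computation of the mass identity, the energy dissipation identity, the non-negativity of the dissipative term $\mathcal{R}(u_\varepsilon)$, and the closure via Gr\"onwall are all correct and match the paper's argument. One small improvement over the paper: you estimate the source term by pairing $(\partial_t A)u_\varepsilon$ directly against $Du_\varepsilon$, which exploits the bound $\|Du_\varepsilon\|_{L^2}^2\le 2\mathcal{E}(u_\varepsilon)$ and the magnetic Sobolev embedding \eqref{HAem}, thereby avoiding the extra factor $(1+\|A_1\|_{L^{b_1}}+\|A_2\|_{L^{b_2}})^2$ that the paper picks up in \eqref{enel2}--\eqref{enel3} from the norm equivalence \eqref{eq:norm_equiv}. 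The place where your proposal is materially lighter than the paper is the passage from smooth data/potentials to the general case, which occupies roughly half of the paper's proof. Passing the pointwise bounds \eqref{eq:Mbound}--\eqref{eq:uni_bou} to the limit is indeed a direct consequence of Proposition \ref{prop:stab_nl}, as you indicate; but deducing \eqref{dissiesti} requires showing that the \emph{nonlinear, quadratic-in-gradient} quantities $|u_{n,\varepsilon}|^{\gamma-1}|(\nabla-\ii A)u_{n,\varepsilon}|^2$, $(|x|^{-\alpha}*|u_{n,\varepsilon}|^2)|(\nabla-\ii A)u_{n,\varepsilon}|^2$, and $(|x|^{-\alpha}*\nabla|u_{n,\varepsilon}|^2)\nabla|u_{n,\varepsilon}|^2$ actually converge in $L^1_tL^1_x$, which the paper establishes through a dedicated choice of Strichartz pairs together with H\"older and Hardy--Littlewood--Sobolev (see \eqref{venti}--\eqref{StriStraconvduelimit}). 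Your phrase ``passes to the limit using Proposition \ref{prop:stab_nl}'' does not by itself deliver this, so you should spell it out. Your alternative route via the parabolic smoothing of Proposition \ref{le:stri_hs}(iii) to get $u_\varepsilon\in L^q_{\mathrm{loc}}((0,T),H^2)$ is a plausible substitute for upgrading the \emph{solution}, but it still does not dispense with some regularization of $A$: writing $\frac{\ud}{\ud t}\mathcal{E}(u_\varepsilon)$ requires the kinetic term $\tfrac12\int|D(t)u_\varepsilon|^2$ to be absolutely continuous in $t$ jointly through $u_\varepsilon(t)$ \emph{and} $A(t)$, and the class $\mathcal{R}$ only gives $\partial_t A_j\in L^1_{\mathrm{loc}}L^{b_j}$, so the pairing needs to be justified for a.e.\ $t$ rather than computed directly. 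None of this undermines your approach --- it is the paper's approach --- but the density step should be developed with the same care that you gave the smooth computation.
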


\begin{remark}
  \emph{At fixed} $\varepsilon>0$ the finiteness of $\mathcal{M}(u_{\eps})(t)$ and of $\mathcal{E}(u_{\eps})(t)$ for all $t\in[0,T)$ is obvious for the mass, since by assumption $u_{\eps}(t)\in L^2(\R^3)$ for every $t\in[0,T)$, and it is also straightforward for the energy, since the property that $((\nabla-\ii A)u_\varepsilon)(t)\in L^2(\R^3)$ for every $t\in[0,T)$ is also part of the assumption, and moreover it is a standard property (see, e.g., \cite[Section 3.2]{cazenave}) that both $\int_{\mathbb{R}^3}|u_\varepsilon|^{\gamma +1}\,\ud x$ and $\int_{\mathbb{R}^3}(|x|^{-\alpha}*|u_\varepsilon|^2)|u_\varepsilon|^2\,\ud x$ are finite for every $t\in[0,T)$, and both in the energy sub-critical and  critical regime. 
  The virtue of Proposition \ref{pr:5ene} is thus to produce bounds \eqref{eq:Mbound}-\eqref{eq:uni_bou} that are \emph{uniform} in $\varepsilon$. The non-uniformity in $T$ of \eqref{eq:Ebound}-\eqref{eq:uni_bou} is due to the fact that the magnetic potential is only $AC_\mathrm{loc}$ in time: for $AC$-potentials such bounds would be uniform in $T$ as well.
\end{remark}

\begin{proof}[Proof of Proposition \ref{pr:5ene}]
We recall that $u_{\eps}$ satisfies 
$$\ii\,\partial_t u_{\eps}\;=\;-(1-\ii\,\eps)(\nabla-\ii\,A)^2u_{\eps}+\mathcal{N}(u_{\eps})$$ as an identity at every $t$ between $H^{-1}$-functions in space.

Let us first prove the thesis in a regular case, and later work out a density argument for the general case.

It is straightforward to see, by means of a customary contraction argument in $L^\infty([0,T],H^s(\mathbb{R}^3))$ for arbitrary $s>0$, that if $f\in\mathcal{S}(\R^3)$ and $A\in AC_{\mathrm{loc}}(\R,\mathcal{S}(\R^3))$, then
the solution $u_{\eps}$ to the local Cauchy problem \eqref{eq:visc_CauMNLS} is smooth in space, whence in particular $u_{\eps}\in \mathcal{C}^1([0,T),H^1(\R^3))$, a fact that justifies the time derivations in the computations that follow. 

From
\[
 \begin{split}
  \frac{\ud}{\ud t}&(\mathcal{M}(u_{\eps}))(t)\;= \\
  &=\;-2\,\mathfrak{Re}\int_{\R^3}\overline{u_{\eps}}\,\big((\ii+\eps)(\nabla-\ii\,A)^2u_{\eps}-\ii\,|u_{\eps}|^{\gamma-1}u_{\eps}-\ii\,(|\cdot|^{-\alpha}*|u_{\eps}|^2)\,u_{\eps}\big)\,\ud x \\
  &=\;-2\eps\int_{\R^3}|(\nabla-\ii\,A)u_{\eps}|^2dx\;\leqslant\; 0\,,
 \end{split}
\]
one deduces $(\mathcal{M}(u_{\eps}))(t)\leqslant (\mathcal{M}(u_{\eps}))(0)$, whence \eqref{eq:Mbound}. 

Next, we compute
\begin{equation}\label{derieni}
\begin{aligned}
&\frac{\ud}{\ud t}(\mathcal{E}(u_{\eps}))(t)\;=\;\mathfrak{Re}\int_{\R^3}\Big(\big((\nabla-\ii\,A)\partial_t u_{\eps}-\ii\,(\partial_tA)u_{\eps}\big)\cdot\overline{(\nabla-\ii\,A)u_{\eps}}\\
&\qquad\qquad\qquad\qquad\qquad\qquad+\big(|u_{\eps}|^{\gamma -1}+(|x|^{-\alpha}*|u_{\eps}|^2)\big)\,\overline{u_{\eps}}\,\partial_t u_{\eps}\Big)\,\ud x\\
&=\;\mathfrak{Re}\int_{\R^3}(\partial_t u_{\eps})\big(-\overline{(\nabla-\ii\,A)^2u_{\eps}}+|u_{\eps}|^{\gamma-1}\,\overline{u_{\eps}}+(|x|^{-\alpha}*|u_{\eps}|^2)\,\overline{u_{\eps}}\,\big)\,\ud x\\
&\qquad\qquad +\int_{\R^3}A\cdot(\partial_tA)|u_{\eps}|^2+(\partial_tA)\cdot\mathfrak{Im}\,(u_{\eps}\overline{\nabla  u_{\eps}}\,)\,\ud x\\
&=\;\eps\!\int_{\R^3}\!\big(\!-|(\nabla-\ii\,A)^2u_{\eps}|^2+(\,|u_{\eps}|^{\gamma-1}+|x|^{-\alpha}*|u_{\eps}|^2)\,\mathfrak{Re}\,(\overline{u_{\eps}}(\nabla-\ii\,A)^2u_{\eps})\big)\,\ud x\!\!\!\!\!\!\!\\
&\qquad\qquad +\int_{\R^3}\!\big(A\cdot(\partial_tA)|u_{\eps}|^2+(\partial_tA)\cdot\mathfrak{Im}\,(u_{\eps}\overline{\nabla  u_{\eps}}\,)\,\ud x \\
&=\;-\eps\!\int_{\R^3}\!\,|(\nabla-\ii\,A)^2u_{\eps}|^2\,\ud x -\varepsilon\,\mathcal{R}(u_\varepsilon)(t)+\mathcal{S}(u_\varepsilon)(t)\,,
\end{aligned}
\end{equation}
where
\begin{equation*}%\label{eq:R_S}
 \begin{split}
  \mathcal{R}(u_\varepsilon)(t)\;&:=\;-\!\int_{\R^3}\!(\,|u_{\eps}|^{\gamma-1}+|x|^{-\alpha}*|u_{\eps}|^2)\,\mathfrak{Re}\,(\overline{u_{\eps}}(\nabla-\ii\,A)^2u_{\eps})\,\ud x \\
  \mathcal{S}(u_\varepsilon)(t)\;&:=\;\int_{\R^3}\!\big(A\cdot(\partial_tA)|u_{\eps}|^2+(\partial_tA)\cdot\mathfrak{Im}\,(u_{\eps}\overline{\nabla  u_{\eps}}\,)\,\big)\,\ud x\,.
 \end{split}
\end{equation*}

From
\begin{equation}\label{dopoderieni}
\begin{aligned}
&\mathcal{R}(u_\varepsilon)(t)\;=\\
&=\;-\int_{\R^3}(\,|u_{\eps}|^{\gamma-1}+|x|^{-\alpha}*|u_{\eps}|^2)\big(-|(\nabla-\ii\,A)u_{\eps}|^2+{\textstyle{\frac{1}{2}}}\Delta|u_{\eps}|^2\big)\,\ud x\\
&=\;+\!\int_{\R^3}|u_{\eps}|^{\gamma-1}|(\nabla-\ii\,A)u_{\eps}|^2\,\ud x+(\gamma-1)\!\int_{\R^3}|u_{\eps}|^{\gamma-1}|\nabla|u_{\eps}||^2\,\ud x\\
&\quad\;\, +\!\int_{\R^3}(\,|x|^{-\alpha}*|u_{\eps}|^2)\,|(\nabla-\ii\,A)u_{\eps}|^2\,\ud x+{\textstyle{\frac{1}{2}}}\!\int_{\R^3}(\,|x|^{-\alpha}*\nabla|u_{\eps}|^2)\,\nabla|u_{\eps}|^2\,\ud x\!\!\!\!\!\!\!
\end{aligned}
\end{equation}
we see that
\begin{equation}\label{eq:Rpositive}
 \mathcal{R}(u_\varepsilon)(t)\;\geqslant\;0\,.
\end{equation}
This is obvious for the first three summands in the r.h.s.~of \eqref{dopoderieni}, whereas for the last one, setting $\phi:=\nabla|u_{\eps}|^2$, Plancherel's formula gives
$$\int_{\R^3}(\,|x|^{-\alpha}*\phi)\,\phi\,\ud x\;=\;\int_{\R^3}\widehat{(|\cdot|^{-\alpha})}(\xi)\,|\widehat{\phi}(\xi)|^2\,\ud \xi\,,$$
and since $\widehat{|\cdot|^{-\alpha}}$ is positive, the fourth summand too is positive.
Therefore, 
%from the identity \eqref{derieni} we deduce the inequality 
\begin{equation}\label{enel1}
\frac{\ud}{\ud t}(\mathcal{E}(u_{\eps}))(t)\;\leqslant\;\mathcal{S}(u_\varepsilon)(t)\,.
%\int_{\R^3}\big(A\cdot(\partial_tA)|u_{\eps}|^2+(\partial_tA)\cdot\mathfrak{Im}\,(u_{\eps}\overline{\nabla  u_{\eps}}\,)\,\ud x\,.
\end{equation}

In order to estimate $\mathcal{S}(u_\varepsilon)(t)$,
%the r.h.s.~of \eqref{enel1},
it is checked by direct inspection that there are $M_1,M_2\in[2,6]$ such that
\[
 b_1*2*M_1\;=\;b_2*2*M_2\;=\;1\,,
\]
whence, for every $t\in[0,T)$ and $j\in\{1,2\}$,
\[
 \begin{split}
  \|u_\varepsilon(t)\|_{L^M_j(\mathbb{R}^3)}\;&\lesssim\; \|u_\varepsilon(t)\|_{H^1(\mathbb{R}^3)} \\
  &\lesssim\;\big(1+ \Vert A_1(t)\Vert_{L^{b_1}(\R^3)}+ \Vert A_2(t)\Vert_{L^{b_2}(\R^3)}\big)\,\Vert u_{\eps}\Vert_{H^1_{A(t)}}
 \end{split}
\]
(Sobolev's embedding and norm equivalence \eqref{eq:norm_equiv}). Thus, by H\"{o}lder's inequality,
\begin{equation}\label{enel2}
\begin{aligned}
\Big|\int_{\R^3}&(\partial_t A(t))\cdot \mathfrak{Im}\,(u_{\eps}(t)\,\overline{\nabla u_{\eps}(t)})\,\ud x\,\Big|\\
%&\lesssim\;\big(\Vert\partial_t A_1(t)\Vert_{L^{b_1}(\R^3)}+\Vert\partial_t A_2(t)\Vert_{L^{b_2}(\R^3)}\big)\,\Vert u_{\eps}\Vert_{H^1_{A(t)}}\,\Vert u_{\eps}(t)\Vert_{H^1(\R^3)}^2\\
&\lesssim\; \big(\Vert\partial_t A_1(t)\Vert_{L^{b_1}(\R^3)}+\Vert\partial_t A_2(t)\Vert_{L^{b_2}(\R^3)}\big)\:\times\\
&\qquad\quad \times\big(1+\Vert A_1(t)\Vert_{L^{b_1}(\R^3)}+ \Vert A_2(t)\Vert_{L^{b_2}(\R^3)}\big)^2\,\Vert u_{\eps}(t)\Vert_{H_{A(t)}^1(\R^3)}^2\\
&\leqslant\;\big(\Vert\partial_t A_1(t)\Vert_{L^{b_1}(\R^3)}+\Vert\partial_t A_2(t)\Vert_{L^{b_2}(\R^3)}\big)\:\times\\
&\qquad\quad \times\big(1+\Vert A_1(t)\Vert_{L^{b_1}(\R^3)}+ \Vert A_2(t)\Vert_{L^{b_2}(\R^3)}\big)^2\,\big(1+(\mathcal{E}(u_{\eps})(t)\big)\,,
\end{aligned}
\end{equation}
the last step following from 
\begin{equation}\label{pdelc}
 \Vert u_{\eps}(t)\Vert_{H_{A(t)}^1}^2\leqslant\; (\mathcal{M}(u_{\eps}))(t)+(\mathcal{E}(u_{\eps}))(t)
\end{equation}
and from $(\mathcal{M}(u_{\eps}))(t)\lesssim 1$. Analogously, now with H\"{o}lder exponents $M_{ij}\in[2,6]$ such that 
\[
 b_i*b_j*{\textstyle\frac{1}{2} }M_{ij}\;=\;1\qquad i,j\in\{1,2\}\,,
\]
we find
\begin{equation}\label{enel3}
\begin{aligned}
\Big|\int_{\R^3}&A\cdot(\partial_tA)\,|u_{\eps}|^2\,\ud x\,\Big|\\
&\lesssim\; \big(\Vert\partial_t A_1(t)\Vert_{L^{b_1}(\R^3)}+\Vert\partial_t A_2(t)\Vert_{L^{b_2}(\R^3)}\big)\:\times\\
&\qquad\quad \times\big(\Vert A_1(t)\Vert_{L^{b_1}(\R^3)}+ \Vert A_2(t)\Vert_{L^{b_2}(\R^3)}\big)\,\Vert u_{\eps}(t)\Vert_{H_{A(t)}^1(\R^3)}^2\\
&\leqslant\;\big(\Vert\partial_t A_1(t)\Vert_{L^{b_1}(\R^3)}+\Vert\partial_t A_2(t)\Vert_{L^{b_2}(\R^3)}\big)\:\times\\
&\qquad\quad \times\big(1+\Vert A_1(t)\Vert_{L^{b_1}(\R^3)}+ \Vert A_2(t)\Vert_{L^{b_2}(\R^3)}\big)\,\big(1+(\mathcal{E}(u_{\eps})(t)\big)\,.
\end{aligned}
\end{equation}

Combining \eqref{enel1}, \eqref{enel2} and \eqref{enel3} together yields
\begin{equation}\label{enel4}
 \begin{split}
  \frac{\ud}{\ud t}(\mathcal{E}(u_{\eps}))(t)\;&\lesssim\;|\mathcal{S}(u_\varepsilon)(t)|\;\lesssim\; \Lambda(t)\,\big(1+(\mathcal{E}(u_{\eps})(t)\big) \\
  \Lambda(t)\;&:=\;\big(\Vert\partial_t A_1(t)\Vert_{L^{b_1}(\R^3)}+\Vert\partial_t A_2(t)\Vert_{L^{b_2}(\R^3)}\big)\:\times \\
  &\qquad\quad\times\:\big(1+\Vert A_1(t)\Vert_{L^{b_1}(\R^3)}+ \Vert A_2(t)\Vert_{L^{b_2}(\R^3)}\big)\,.
 \end{split}
\end{equation}
Owing to the assumptions on $A$, $\Lambda\in L_{\mathrm{loc}}^{1}(\R,\ud t)$, therefore Gr\"onwall's lemma is applicable to \eqref{enel4} and we deduce
\[
 (\mathcal{E}(u_{\eps}))(t)\;\leqslant\;e^{\int_0^t\Lambda(s)\,\ud s}\Big((\mathcal{E}(u_{\eps}))(0)+\int_0^t\Lambda(s)\,\ud s\Big)\;\lesssim_{A,T}\;1\,,
\]
which proves \eqref{eq:Ebound}. Based on \eqref{pdelc} and on the norm equivalence \eqref{le:norm_equiv}, the bounds \eqref{eq:Mbound} and \eqref{eq:Ebound} then imply also \eqref{eq:uni_bou}.

Let us prove now the a priori bound \eqref{dissiesti}. Integrating \eqref{derieni} in $t\in[0,T)$  yields
\begin{equation*}
 \begin{split}
  (\mathcal{E}&(u_{\eps}))(T)-(\mathcal{E}(u_{\eps}))(0)\;= \\
  &=\;-\eps\!\int_0^T\!\!\Big(\int_{\R^3}\!\big(\,|(\nabla-\ii\,A)^2u_{\eps}|^2\,\ud x +\mathcal{R}(u_\varepsilon)(t)\Big)\,\ud t+\int_0^T\!\!\mathcal{S}(u_\varepsilon)(t)\,\ud t\,,
 \end{split}
\end{equation*}
whence 
\begin{equation*}
   \int_0^T\!\!\mathcal{R}(u_\varepsilon)(t)\,\ud t\;\leqslant\;\frac{1}{\varepsilon}\Big(\,|(\mathcal{E}(u_{\eps}))(T)-(\mathcal{E}(u_{\eps}))(0)|+\!\int_0^T\!\!|\mathcal{S}(u_\varepsilon)(t)|\,\ud t\Big)\,.
\end{equation*}
The bound \eqref{enel4} for $|\mathcal{S}(u_\varepsilon)(t)|$ and the bound \eqref{eq:Ebound} for $\mathcal{E}(u_\varepsilon)(t)$, together with the fact that $\Lambda\in L_{\mathrm{loc}}^{1}(\R,\ud t)$, then give
\begin{equation}\label{eq:boundintR}
  \int_0^T\!\!\mathcal{R}(u_\varepsilon)(t)\,\ud t\;\lesssim_{A,T}\;\varepsilon^{-1}\,.
\end{equation}
It is clear from \eqref{dopoderieni} that the l.h.s.~of the a priori bound \eqref{dissiesti} is controlled by  $\int_0^T\!\mathcal{R}(u_\varepsilon)(t)\ud t$, therefore \eqref{eq:boundintR} implies \eqref{dissiesti}.

This completes the proof under the additional assumption that $f\in\mathcal{S}(\R^3)$ and $A\in AC_{\mathrm{loc}}(\R,\mathcal{S}(\R^3))$. The proof in the general case of non-smooth potentials and non-smooth initial data follows by a density argument. We consider a sequence of regular potentials $A_n$ and regular initial data $f_n$ such that $f_n\rightarrow f$ in $H^1(\R^3)$ and $\|A_{n}-A\|_{\mathcal{A}_1}\to 0$ when $A\in\mathcal{A}_1$, or $\|A_{n}-A\|_{\mathcal{A}_2}\to 0$ when $A\in\mathcal{A}_2$, and we denote by $u_{\eps,n}$ the solution to the local Cauchy problem \eqref{eq:visc_CauMNLS} with initial datum $f_n$ and magnetic potential $A_n$.

Having already established Proposition \ref{pr:5ene} for such regular initial data and potentials, the bounds
\begin{eqnarray}
 \Vert u_{\eps,n}\Vert_{L^{\infty}([0,T);L^2(\R^3))}\!\!\!&\!\!\!\!\!\!\!\!\!\lesssim&\!\! 1 \label{eq:uni_bou_mass_n} \\
 \Vert u_{\eps,n}\Vert_{L^{\infty}([0,T);H^1(\R^3))}\!\!\!&\lesssim_{A,T}& \!\!1 \label{eq:uni_bou_n}
\end{eqnarray}
hold for every $n$ uniformly in $\eps>0$. The latter fact, together with the stability property
\begin{equation}\label{eq:stabinf2}
\|u_{n,\eps}-u_{\eps}\|_{L^{\infty}[0,T),H^1(\R^3))}\rightarrow 0\qquad\textrm{uniformly in $\varepsilon$} 
\end{equation}
given by Proposition \ref{prop:stab_nl}, then imply \eqref{eq:Mbound} and \eqref{eq:uni_bou} also in the general case. Analogously, since for fixed $t$ the mass $\mathcal{M}(u)(t)$ and the energy $\mathcal{E}(u)(t)$ depend continuously on the $H^1$-norm of $u(t)$, \eqref{eq:stabinf2} also implies \eqref{eq:Mbound} and \eqref{eq:Ebound} in the general case.

We are left to prove the energy a priori bound \eqref{dissiesti}. We first collect some useful facts, valid for a generic Strichartz pair $(q,r)$, with $r\in[2,3)$. The starting point is the stability result proved in Proposition \ref{prop:stab_nl}, which in this case reads
\begin{equation}\label{venti}
u_{n,\eps}\;\longrightarrow\; u_{\eps}\quad\mbox{in   }L^{q}([0,T),W^{1,r}(\mathbb{R}^3))\,.
\end{equation}
In particular,
\begin{eqnarray}
& u_{n,\eps}\;\longrightarrow\; u_{\eps}&\,\mbox{in }L^{q}([0,T),L^{\frac{Mr}{M-r}}(\mathbb{R}^3))\,,\quad M\in[3,+\infty]\,,\label{ventuno}\\
& \nabla u_{n,\eps}\;\longrightarrow\; \nabla u_{\eps}&\,\mbox{in }L^{q}([0,T),L^{r}(\mathbb{R}^3))\,.\label{ventidue}
\end{eqnarray}
Moreover the following identity is trivially satisfied (recall that $b_i>3)$:
\begin{equation}\label{puma}
(+\infty,b_i)*\Big(q,\frac{b_ir}{b_i-r}\Big)\;=\;(q,r),\quad i\in\{1,2\}\,.
\end{equation}
Now, \eqref{ventuno} and H\"older's inequality yield
\begin{equation}\label{ventitre}
 A u_{n,\eps}\longrightarrow A u_{\eps}\quad\mbox{in   }L^{q}([0,T),L^{r}(\mathbb{R}^3))\,,
 \end{equation}
and \eqref{ventidue} and \eqref {ventitre} yield
\begin{equation}\label{ventiquattro}
 |(\nabla-\ii A) u_{n,\eps}|^2\longrightarrow |(\nabla-\ii A) u_{\eps}|^{2}\quad\mbox{in   }L^{\frac{q}{2}}([0,T),L^{\frac{r}{2}}(\mathbb{R}^3)). 
 \end{equation}

We show now how to prove estimate \eqref{dissiesti} in the general case. Having already established Proposition \ref{pr:5ene} for regular initial data and potentials, we have in particular
\begin{eqnarray}
\big\|u_{n,\eps}^{\gamma-1}|(\nabla-\ii A) u_{n,\eps}|^2\big\|_{L^1([0,T),L^1(\mathbb{R}^3))}\!\!\!&\lesssim_{A,T}\;\eps^{-1},\label{StriStra}\\
\big\|(|x|^{-\alpha}*|u_{n,\eps}|^2)|(\nabla-\ii A) u_{n,\eps}|^2\big\|_{L^1([0,T),L^1(\mathbb{R}^3))}\!\!\!&\lesssim_{A,T}\;\eps^{-1},\label{StriStraconv}\\
\big\|(|x|^{-\alpha}*\nabla|u_{n,\eps}|^2)\nabla|u_{n,\eps}|^2\big\|_{L^1([0,T),L^1(\mathbb{R}^3))}\!\!\!&\lesssim_{A,T}\;\eps^{-1}\,.\label{StriStraconvdue}
\end{eqnarray}
For any $\gamma\in(1,5]$ we can find Strichartz pairs $(q_1,r_1)$ and $(q_2,r_2)$, with $r_1,r_2\in[2,3)$, such that
$$\textstyle\big(\frac{q_1}{\gamma-1},\frac{3r_1}{(3-r_1)(\gamma-1)}\big)*\big(\frac{q_2}{2},\frac{r_2}{2}\big)\;=\;(1,1)\,.$$
Then \eqref{ventuno}, \eqref{ventiquattro}, and H\"older's inequality yield
\begin{equation}\label{venticinque}
u_{n,\eps}^{\gamma-1}|(\nabla-\ii A) u_{n,\eps}|^2\!\!\longrightarrow u_{\eps}^{\gamma-1}|(\nabla-\ii A) u_{\eps}|^2\quad\mbox{in   }L^{1}([0,T),L^{1}(\mathbb{R}^3))\,, 
 \end{equation}
which together with the bound \eqref{StriStra} implies
\begin{equation}\label{StriStralimit}
\big\|u_{\eps}^{\gamma-1}|(\nabla-\ii A) u_{\eps}|^2\big\|_{L^1([0,T),L^1(\mathbb{R}^3))}\;\lesssim_{A,T}\;\eps^{-1}\,.
\end{equation}
In turn, the diamagnetic inequality $|\nabla|g||\leqslant |(\nabla-\ii A)g|$ and \eqref{StriStralimit} give also
\begin{equation}\label{StriStralimitdue}
\big\|u_{\eps}^{\gamma-1}|\nabla |u_{\eps}||^2\big\|_{L^1([0,T),L^1(\mathbb{R}^3))}\;\lesssim_{A,T}\;\eps^{-1}\,.
\end{equation}

Concerning the convolution terms, for any $\alpha\in(0,3)$ we can find Strichartz pairs $(\widetilde{q}_1,\widetilde{r}_1)$ and $(\widetilde{q}_2,\widetilde{r}_2)$, with $\widetilde{r}_1,\widetilde{r}_2\in[2,3)$, such that
\[
 \begin{split}
  \int_0^T\!\!\int_{\mathbb{R}^3}&(|x|^{-\alpha}*|u_{n,\eps}|^2)|(\nabla-\ii A) u_{n,\eps}|^2\,\ud x\,\ud t\;\lesssim \\
  &\;\lesssim\;\|u\|_{L^{\frac{2}{\widetilde{q}}}([0,T),L^{\frac{3\widetilde{r}_1}{2(3-\widetilde{r}_1)}}(\mathbb{R}^3))}\,\|(\nabla-\ii A) u_{n,\eps}\|_{L^{\frac{\widetilde{q}_2}{2}}([0,T),L^{\frac{\widetilde{r}_2}{2}}(\mathbb{R}^3))}\,,
 \end{split}
\]
which is obtained by the Hardy-Littlewood-Sobolev and H\"older's inequality. Therefore, \begin{equation}\label{ventisei}
\begin{split}
 (|x|^{-\alpha}*|u_{n,\eps}|^2)|&(\nabla-\ii A) u_{n,\eps}|^2\;\longrightarrow\; (|x|^{-\alpha}*|u_{\eps}|^2)|(\nabla-\ii A) u_{\eps}|^2 
\\
 &\qquad\qquad \mbox{in   }L^{1}([0,T)L^{1}(\mathbb{R}^3))\,, 
\end{split}
 \end{equation}
which together with the bound \eqref{StriStraconv} implies
\begin{equation}\label{StriStralimitconv}
\big\|(|x|^{-\alpha}*|u_{\eps}|^2)|(\nabla-\ii A) u_{\eps}|^2\big\|_{L^1([0,T),L^1(\mathbb{R}^3))}\;\lesssim_{A,T}\;\eps^{-1}\,.
\end{equation}
In analogous manner, using Hardy-Littlewood-Sobolev and H\"older's inequality, from \eqref{ventuno} and \eqref{ventidue} we get
\begin{equation}\label{ventisette}
\begin{split}
(|x|^{-\alpha}*\nabla|u_{n,\eps}|^2)&\nabla|u_{n,\eps}|^2\;\longrightarrow\; (|x|^{-\alpha}*\nabla|u_{\eps}|^2)\nabla|u_{\eps}|^2
\\
 &\quad\quad \mbox{in   }L^{1}([0,T)L^{1}(\mathbb{R}^3))\,, 
\end{split}
 \end{equation}
which together with the bound \eqref{StriStraconvdue} implies
\begin{equation}\label{StriStraconvduelimit}
\big\|(|x|^{-\alpha}*\nabla|u_{\eps}|^2)\nabla|u_{\eps}|^2\big\|_{L^1([0,T),L^1(\mathbb{R}^3))}\;\lesssim_{A,T}\;\eps^{-1}.
\end{equation}
The a priori abound \eqref{dissiesti} in the general case follows by combining \eqref{StriStralimit}, \eqref{StriStralimitdue}, \eqref{StriStralimitconv} and \eqref{StriStraconvduelimit}.
\end{proof}

\begin{remark}
 The inequality \eqref{pdelc}, namely
 \begin{equation}
 \Vert u_{\eps}(t)\Vert_{H_{A(t)}^1}^2\leqslant\; (\mathcal{M}(u_{\eps}))(t)+(\mathcal{E}(u_{\eps}))(t)\,,\qquad t\in[0,T)\,,
\end{equation}
 reflects the \emph{defocusing} structure of the regularised magnetic NLS \eqref{eq:visc_nls}.
\end{remark}

\section{Global existence for the regularised equation}\label{eq:eps-GWP}

In this Section we exploit the a priori estimates for mass and energy so as to prove that the local solution to the regularised Cauchy problem \eqref{eq:visc_CauMNLS}, constructed is Section \ref{sec:LWP}, can be actually extended globally in time.

We discuss first the result in the energy sub-critical case.

\begin{theorem}[Global well-posedness, energy sub-critical case]\label{t:s}
Assume that $A\in\mathcal{A}_1$ or $A\in\mathcal{A}_2$, and that the exponents in the non-linearity \eqref{hart_pp_nl} are in the regime $\gamma\in(1,5)$ and $\alpha\in(0,3)$. Let $\eps>0$. Then the regularised non-linear magnetic Schr\"{o}dinger equation \eqref{eq:visc_nls} is globally well-posed in $H^1(\R^3)$, in the sense of Definitions \ref{de:ws_solution_alt} and \ref{de:WP}. Moreover, the solution $u_{\eps}$ to \eqref{eq:visc_nls} with given initial datum $f\in H^1(\R^3)$ satisfies the bound 
\begin{equation}\label{eq:unif_bound}
\Vert u_{\eps}\Vert_{L^{\infty}[0,T],H^1(\R^3)}\;\lesssim_T\; 1\qquad\forall\, T\in(0,+\infty)\,,
\end{equation}
uniformly in $\eps>0$.
\end{theorem}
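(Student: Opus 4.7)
The plan is a standard continuation argument: I combine the local theory of Proposition \ref{pr:exsub}, which produces the maximal-time solution together with the $H^1$-blow-up alternative, with the a priori $H^1$-bound of Proposition \ref{pr:5ene}, which rules out such blow-up on every compact time interval.

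First, I would invoke Proposition \ref{pr:exsub} to obtain, for the given $f \in H^1(\R^3)$, a unique local strong $H^1$-solution $u_{\eps}$ to \eqref{eq:visc_nls} on a maximal interval $[0, T_{\mathrm{max}})$, subject to the alternative $\|u_{\eps}(t)\|_{H^1(\R^3)} \to +\infty$ as $t \uparrow T_{\mathrm{max}}$ should $T_{\mathrm{max}}$ be finite. Assume by contradiction that $T_{\mathrm{max}} < +\infty$. For every $T \in (0, T_{\mathrm{max}})$ the solution $u_{\eps}$ lies in $\mathcal{C}([0,T], H^1(\R^3))$, so Proposition \ref{pr:5ene} applies and yields the bound $\|u_{\eps}\|_{L^{\infty}([0,T], H^1(\R^3))} \lesssim_{A, T} 1$. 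The constant produced there arises from a Gr\"onwall-type integration of an $L^1_{\mathrm{loc}}$ weight built from $\|A_j(t)\|_{L^{b_j}}$ and $\|\partial_t A_j(t)\|_{L^{b_j}}$, hence stays finite as $T \uparrow T_{\mathrm{max}}$. The resulting uniform $H^1$-bound up to $T_{\mathrm{max}}$ contradicts the blow-up alternative, forcing $T_{\mathrm{max}} = +\infty$. Therefore $u_{\eps}$ is a global strong $H^1$-solution, and the $\eps$-uniform bound \eqref{eq:unif_bound} on each compact $[0,T]$ is exactly \eqref{eq:uni_bou}.

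For the continuous dependence on initial data required by Definition \ref{de:WP}(ii) I would appeal to Proposition \ref{prop:stab_nl} specialised to $A^{(1)} = A^{(2)} = A$, so that the smallness assumption on $\|A^{(1)} - A^{(2)}\|_{\mathcal{A}_j}$ is trivially satisfied. Given $f_n \to f$ in $H^1(\R^3)$ and a closed interval $[0,T]$, the corresponding global solutions $u_n$ then satisfy $u_n \to u$ in $L^q([0,T], W^{1,r}(\R^3))$ for every admissible pair $(q,r)$ with $r \in [2,3]$; choosing $(q,r) = (+\infty, 2)$ produces the required convergence in $\mathcal{C}([0,T], H^1(\R^3))$, using that each $u_n$ and $u$ already lies in $\mathcal{C}([0,T], H^1(\R^3))$. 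The blow-up alternative in Definition \ref{de:WP}(iii) is void in the global-existence case we have just established.

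The only mildly delicate point, rather than a genuine obstacle, is verifying that the Gr\"onwall constant of Proposition \ref{pr:5ene} remains finite as $T \uparrow T_{\mathrm{max}}$. This is precisely the role of the class $\mathcal{R}$: the $L^1_{\mathrm{loc}}$-in-time regularity of $\partial_t A_j$ guarantees integrability of the driving term of the energy inequality on every bounded interval, and no essential difficulty appears in the sub-critical regime (the genuinely critical case $\gamma = 5$, where the blow-up alternative is instead controlled by an $L^6_t L^{18}_x$-norm, will require the separate space-time a priori bounds \eqref{dissiesti}).
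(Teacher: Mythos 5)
Your argument is correct and follows the same route as the paper: Proposition~\ref{pr:exsub} for local existence plus the blow-up alternative, combined with the uniform-in-$\eps$ a priori bound \eqref{eq:uni_bou} of Proposition~\ref{pr:5ene}, yields global existence and \eqref{eq:unif_bound}. Your additional remarks --- deducing continuous dependence from Proposition~\ref{prop:stab_nl} with $A^{(1)}=A^{(2)}$, and observing that the Gr\"onwall constant stays finite as $T\uparrow T_{\max}$ because $\Lambda\in L^1_{\mathrm{loc}}$ and $T_{\max}<\infty$ --- simply make explicit what the paper's terse proof leaves implicit.
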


\begin{proof}
The local well-posedness is proved in Proposition \ref{pr:exsub}. Because of \eqref{eq:uni_bou}, the $H^1$-norm of $u_{\eps}$ is bounded on finite intervals of time. Therefore, by the blow-up alternative, the solution is necessarily global and in particular it satisfies the bound \eqref{eq:unif_bound}.
\end{proof}

We discuss now the analogous result in the energy-critical case.

\begin{theorem}[Global existence and uniqueness, energy critical case]\label{t:c}
Assume that $A\in\mathcal{A}_1$ or $A\in\mathcal{A}_2$, and that the exponents in the non-linearity \eqref{hart_pp_nl} are in the regime $\gamma=5$ and $\alpha\in(0,3)$. Let $\eps>0$ and $f\in H^1(\R^3)$. The Cauchy problem \eqref{eq:visc_CauMNLS} has a unique global strong $H^1$-solution $u_\varepsilon$, in the sense of Definition \ref{de:ws_solution_alt}. Moreover, $u$ satisfies the bound 
\begin{equation}\label{eq:unif_bound_cri}
\Vert u_{\eps}\Vert_{L^{\infty}[0,T],H^1(\R^3)}\;\lesssim_T\; 1\qquad\forall\, T\in(0,+\infty)\,,
\end{equation}
uniformly in $\eps>0$.
\end{theorem}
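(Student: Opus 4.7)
The plan is to follow the same template as Theorem \ref{t:s} for sub-critical exponents, but with the crucial modification that the blow-up alternative from Proposition \ref{pr:crie} is phrased in terms of the Strichartz norm $\Vert u\Vert_{L^6_tL^{18}_x}$ rather than the $H^1$ norm. So the main task is to upgrade the $L^\infty_t H^1_x$-control coming from Proposition \ref{pr:5ene} into a control of $\Vert u_\eps\Vert_{L^6([0,T],L^{18}(\R^3))}$ on every bounded time interval, exploiting now the genuine dissipative bound \eqref{dissiesti}, which at $\gamma=5$ provides the extra integrability the uniform energy bound alone cannot furnish.

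Concretely, I would first invoke Proposition \ref{pr:crie} to get a unique local strong solution $u_\eps$ on a maximal interval $[0,T_{\max})$, with the $L^6_tL^{18}_x$-blow-up alternative. On every $[0,T]\subset [0,T_{\max})$, Proposition \ref{pr:5ene} supplies the a priori estimates \eqref{eq:Mbound}, \eqref{eq:Ebound}, \eqref{eq:uni_bou}, and most importantly the dissipative bound \eqref{dissiesti}, which for $\gamma=5$ reads
\begin{equation*}
\int_0^{T}\!\!\int_{\R^3}|u_\eps|^{4}\,|\nabla|u_\eps||^2\,\ud x\,\ud t\;\lesssim_{A,T}\;\eps^{-1}\,.
\end{equation*}
The point is now the algebraic identity $|u_\eps|^{4}|\nabla|u_\eps||^2=\tfrac{1}{9}|\nabla|u_\eps|^3|^2$, which rewrites the above as $\Vert\,|u_\eps|^3\Vert_{L^2([0,T],\dot H^1(\R^3))}^2\lesssim_{A,T}\eps^{-1}$.

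Combining this with the trivial control $\Vert\,|u_\eps|^3\Vert_{L^\infty([0,T],L^2(\R^3))}=\Vert u_\eps\Vert_{L^\infty([0,T],L^6(\R^3))}^3\lesssim_{A,T}1$, obtained from \eqref{eq:uni_bou} via Sobolev embedding, and using the Sobolev embedding $\dot H^1(\R^3)\hookrightarrow L^6(\R^3)$, I get $\Vert\,|u_\eps|^3\Vert_{L^2([0,T],L^6(\R^3))}\lesssim_{A,T}\eps^{-1/2}$, which is nothing but
\begin{equation*}
\Vert u_\eps\Vert_{L^6([0,T],L^{18}(\R^3))}\;\lesssim_{A,T}\;\eps^{-1/6}\,.
\end{equation*}
Since this holds on every $[0,T]$ strictly contained in $[0,T_{\max})$ with a bound depending on $T$ but independent of how close $T$ is to $T_{\max}$, were $T_{\max}$ finite we could let $T\uparrow T_{\max}$ and obtain a finite $L^6_tL^{18}_x$ norm on the whole $[0,T_{\max})$, contradicting the blow-up alternative of Proposition \ref{pr:crie}. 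Therefore $T_{\max}=+\infty$ and $u_\eps$ is the desired global strong $H^1$-solution. The uniform-in-$\eps$ bound \eqref{eq:unif_bound_cri} is then a direct restatement of \eqref{eq:uni_bou}.

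The main obstacle, compared with the sub-critical case, is precisely the step in which the dissipative estimate (which loses an $\eps^{-1}$) is converted into control of the scaling-critical Strichartz norm that governs the blow-up alternative; the degeneration as $\eps\downarrow 0$ is harmless here because at this stage $\eps>0$ is fixed, and it will be compensated later in Section \ref{sec:remov-reg} by the uniform $H^1$-bound. A subsidiary point to check is that the identity $|u|^4|\nabla|u||^2=\tfrac{1}{9}|\nabla|u|^3|^2$ and the ensuing manipulations are legitimate for merely $H^1$-solutions; this is handled, as in the proof of Proposition \ref{pr:5ene}, by the density argument with smooth potentials and smooth data together with the stability result of Proposition \ref{prop:stab_nl}.
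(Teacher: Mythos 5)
Your argument coincides with the paper's own proof: both rest on reading off the $\gamma=5$ term of the dissipative bound \eqref{dissiesti}, recognizing it as $\|\nabla|u_\eps|^3\|_{L^2_tL^2_x}^2$ up to a constant, applying Sobolev embedding to obtain a finite $L^6_tL^{18}_x$ norm on $[0,T]$, and invoking the $L^6_tL^{18}_x$ blow-up alternative of Proposition \ref{pr:crie} to conclude globality, with \eqref{eq:unif_bound_cri} then inherited from \eqref{eq:uni_bou}. The only cosmetic differences are that you insert a superfluous $L^\infty_tL^2_x$ control of $|u_\eps|^3$ (not needed, since the homogeneous embedding $\dot H^1\hookrightarrow L^6$ suffices, as the paper uses) and spell out the density argument that the paper leaves implicit via Proposition \ref{pr:5ene}.
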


\begin{proof}
The existence of a unique local solution $u_\varepsilon$ is proved in Proposition \ref{pr:crie}. The a priori bound \eqref{dissiesti} implies that
$$\int_0^T\!\!\int_{\mathbb{R}^3}\big(\,|u_{\eps}|^2\,\nabla|u_{\eps}|\big)^2\,\ud x\,\ud t\;\lesssim\;\varepsilon^{-1}\,,$$
which, together with Sobolev's embedding, yields 
\begin{equation}\label{eq:seidiciotto}
\begin{aligned}
\Vert u_{\eps}\Vert^6_{L^6([0,T],L^{18}(\R^3))}\;&=\;\Vert u_{\eps}^3\Vert_{L^2([0,T],L^{6}(\R^3))}^2\;\lesssim\int_0^T\!\!\int_{\R^3}|\nabla|u_{\eps}|^3|^2\,\ud x\,\ud t \\
&\lesssim\int_0^T\!\!\int_{\R^3}|u_{\eps}|^4\,|\nabla|u_{\eps}||^2\,\ud x\,\ud t\;\lesssim\;\varepsilon^{-1}<\;+\infty\,.
\end{aligned}
\end{equation}
Owing to \eqref{eq:seidiciotto} and to the blow-up alternative proved in Proposition \ref{pr:crie}, we conclude that the solution $u$ can be extended globally and moreover, using again \eqref{eq:uni_bou}, it satisfies the bound \eqref{eq:unif_bound_cri}.
\end{proof}

\begin{remark}\label{asba}
As anticipated in the Introduction, right after stating the assumptions on the magnetic potential, let us comment here about the fact that in the \emph{mass sub-critical} regime ($\gamma\in(1,\frac{7}{3})$ and $\alpha\in(0,2)$) we can work with the larger class $\widetilde{\mathcal{A}}_1$ instead of $\mathcal{A}_1$ and still prove the extension of the local solution globally in time with finite $H^1$-norm on arbitrary finite time interval. This is due to the fact that, for a potential $u\in\widetilde{\mathcal{A}}_1$ and in the mass sub-critical regime, in order to extend the solution globally neither need we the estimate \eqref{eq:uni_bou} as in the proof of Theorem \ref{t:s}, nor need we the estimate \eqref{dissiesti} as in the proof of Theorem \ref{t:c}. Indeed, we can first prove local well-posedness in $L^2(\mathbb{R}^3)$ for the regularised magnetic NLS \eqref{eq:visc_nls}, using a fixed point argument based on the space-time estimates for the heat-Schr\"odinger flow, in the very same spirit of the proof of Theorem \ref{th:energy_linear}. Then we can extend such a solution globally in time using only the mass a priori bound \eqref{eq:Mbound}, for proving such a bound does not require any time-regularity assumption on the magnetic potential. Moreover, since the non-linearities are mass sub-critical and since we can prove convenient estimates on the commutator $[\nabla,(\nabla-\ii\,A)^2]$ when $\max{\{b_1,b_2\}}\in (3,6)$, we can show that the global $L^2$-solution exhibits \emph{persistence of $H^1$-regularity} in the sense that it stays in $H^1(\R^3)$ for every positive time provided that the initial datum belongs already to $H^1(\R^3)$. This way, we obtain existence and uniqueness of one global strong $H^1$-solution.
\end{remark}

%%%%%%%%%%%%%%%%%%%%%%%%%%%%%%%%%%%%%%%%%%
% PEZZO DA TENERE - ALESSANDRO
% 
% Now, next to \eqref{StriStra} and \eqref{StriStraConv}, the dissipation estimate \eqref{dissiesti} in the smooth case also implies 
% \begin{equation}\label{StriStraA}
% \big\|u_{n,\eps}^{\gamma-1}\,|(\nabla-\ii A)u_{n,\eps}|^2\big\|_{L^1([0,T),L^1(\mathbb{R}^3))}\;\lesssim_{A,T}\;\eps^{-1}
% \end{equation}
% and
% \begin{equation}\label{StriStraConvA}
% \big\|(|x|^{-\alpha}*\nabla |u_{n,\eps}|^2)\nabla|u_{n,\eps}|^2\big\|_{L^1([0,T),L^1(\mathbb{R}^3))}\;\lesssim_{A,T}\;\eps^{-1}\,.
% \end{equation}
% 
% and, by means of the norm equivalence $H_A^1(\R^3)\cong H^1(\R^3)$,
% \begin{equation}\label{tresbra2}
% \begin{aligned}
% \int_0^{T}\!\!\int_{\R^3}|u_{\eps}|^{\gamma-1}|(\nabla-\ii\,A)u_{\eps}|^2\,\ud x\,\ud t\;\lesssim_{A,T}\;\varepsilon^{-1}\,.
% \end{aligned}
% \end{equation}
% 
% and, by means of the norm equivalence $H_A^1(\R^3)\cong H^1(\R^3)$,
% \begin{equation}\label{tresbra4}
% \int_0^T\!\!\int_{\R^3}\big(|x|^{-\alpha}*\nabla |u_{\eps}|^2\big)\,|(\nabla-\ii\,A)u_{\eps}|^2\,\ud x\,\ud t\;\lesssim_{A,T}\;\eps^{-1}\,.
% \end{equation}

\section{Removing the regularisation}\label{sec:remov-reg}

In this Section we prove our main Theorem \ref{th:main}. The proof is based on a compactness argument that we develop in Subsection \ref{sec:LocalWeakSol}, so as to remove the $\varepsilon$-regularisation, and leads to a \emph{local} weak $H^1$-solution to \eqref{eq:CauMNLS}.

The reason why by compactness we can only produce local solutions is merely due to the local-in-time regularity of magnetic potentials belonging to the class $\mathcal{A}_1$ or $\mathcal{A}_2$ -- \emph{globally}-in-time regular potentials, say, $AC(\mathbb{R})$-potentials, would instead allow for a direct removal of the regularisation globally in time. 

In order to circumvent this simple obstruction, in Subsection \ref{sec:proof_of_main_Thm} we work out a straightforward `gluing' argument, eventually proving Theorem \ref{th:main}.

\subsection{Local weak solutions}\label{sec:LocalWeakSol}

The main result of this Subsection is the following.

\begin{proposition}\label{pr:compa}
Assume that $A\in\mathcal{A}_1$ or $A\in\mathcal{A}_2$, and that the exponents in the non-linearity \eqref{hart_pp_nl} are in the whole regime $\gamma\in(1,5]$ and $\alpha\in(0,3)$. Let $T>0$, and $f\in H^1(\R^3)$. For any sequence $(\varepsilon_n)_n$ of positive numbers with  $\eps_n\downarrow 0$, let $u_n$ be the unique global strong $H^1$-solution to the Cauchy problem \eqref{eq:visc_CauMNLS} with viscosity parameter $\eps=\eps_n$ and with initial datum $f$, as provided by Theorem \ref{t:s} in the energy sub-critical case and by Theorem \ref{t:c} in the energy critical case. Then, up to a subsequence, $u_n$ converges weakly-$*$ in $L^{\infty}([0,T],H^1(\R^3))$ to a local weak $H^1$-solution $u$ to the magnetic NLS \eqref{eq:magneticNLS} in the time interval $[0,T]$ and with initial datum $f$.
\end{proposition}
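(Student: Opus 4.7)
The plan is to exploit the uniform-in-$n$ a priori bounds from Theorems \ref{t:s} and \ref{t:c} together with a standard compactness/distributional passage-to-the-limit argument. First, from the two global-existence theorems, $u_n$ is bounded in $L^{\infty}([0,T],H^1(\R^3))$ uniformly in $n$. Writing the equation \eqref{eq:visc_nls} as
\[
 i\partial_t u_n\;=\;-(1-i\eps_n)\Delta u_n +(1-i\eps_n)\big(2i A\cdot\nabla u_n+|A|^2 u_n\big)+\mathcal N(u_n)\,,
\]
I would use \eqref{H1H-1} together with Sobolev embedding to control $\|\mathcal N(u_n)(t)\|_{H^{-1}}$ by a power of $\|u_n(t)\|_{H^1}$ (note $|u_n|^{\gamma-1}u_n\in L^{6/5}\hookrightarrow H^{-1}$ even in the critical case), which yields a uniform bound on $\partial_t u_n$ in $L^p_{t}H^{-1}$ for some $p>1$ determined by the integrability of $A$ in time. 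By Banach--Alaoglu one extracts a subsequence with $u_n\rightharpoonup^{*} u$ in $L^{\infty}([0,T],H^1)$ and $\partial_t u_n\rightharpoonup \partial_t u$ in $L^p([0,T],H^{-1})$.

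Next, I would invoke Aubin--Lions on each ball $B_R\subset\R^3$, using the compact embedding $H^1(B_R)\hookrightarrow L^q(B_R)$ for $q<6$, to obtain strong convergence $u_n\to u$ in $L^r([0,T],L^q(B_R))$ for every $r<\infty$ and $q<6$; a standard diagonal extraction then yields $u_n\to u$ a.e.\ on $[0,T]\times\R^3$. With these two types of convergence in hand, the passage to the limit in the distributional equation is routine for most terms: the Laplacian term by weak convergence; the viscous contribution $\eps_n(1-i\eps_n)(\nabla-iA)^2 u_n$ vanishes because of its uniform bound in a negative-order space multiplied by $\eps_n\to 0$; the magnetic linear terms pass to the limit by pairing the integrability of $A$ (resp.\ of $|A|^2$, handled via Lemma \ref{le:abcd} and Sobolev embedding) with the strong convergence of $u_n$ and the weak convergence of $\nabla u_n$; the Hartree term is handled via Hardy--Littlewood--Sobolev combined with strong convergence in $L^q_{\mathrm{loc}}$ for suitable $q<6$.

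The main obstacle is the passage to the limit in the power nonlinearity $|u_n|^{\gamma-1}u_n$ in the energy-critical case $\gamma=5$, where the uniform $L^{\infty}_tH^1_x$ bound only provides $L^{\infty}_tL^6_x$ integrability of $u_n$ and no compactness is available at the endpoint. I would treat this by combining the almost-everywhere convergence $u_n\to u$ (hence $|u_n|^{4}u_n\to|u|^{4}u$ a.e.) with a Vitali-type argument, using the uniform bound of $|u_n|^{5}$ in $L^{\infty}_tL^{6/5}_x$ together with the strong convergence in $L^r_tL^q_{\mathrm{loc}}$ for $q<6$ to upgrade a.e.\ convergence to convergence in $L^1_{\mathrm{loc}}([0,T]\times\R^3)$, which suffices for testing against smooth, compactly supported functions.

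Finally, I would verify that the limit $u$ satisfies the required regularity and initial condition: the bound on $\partial_t u$ places $u\in C([0,T],H^{-1})$; since $u_n(0)=f$ for every $n$ and $u_n\to u$ in $C([0,T],H^{-s})$ for some small $s>0$ (again by Aubin--Lions), one obtains $u(0)=f$ in $H^{-1}$ and hence in $H^1$ thanks to the weak-$*$ lower semicontinuity of the $H^1$-norm. This identifies $u$ as a local weak $H^1$-solution on $[0,T]$ in the sense of Definition \ref{de:ws_solution_alt}(ii).
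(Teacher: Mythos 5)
Your proposal follows essentially the same route as the paper: uniform-in-$\eps$ bounds on $u_n$ in $L^\infty_tH^1_x$ and on $\partial_t u_n$ in a space dual to a time-integrable $H^1$-valued space (the paper uses that $A\in\mathcal R$ makes $A_j$ bounded from $[0,T]$ to $L^{b_j}$, so in fact $\partial_t u_n$ is bounded in $L^\infty_tH^{-1}_x$), Banach--Alaoglu for weak-$*$ limits of $u_n$ and of each nonlinear/magnetic term, Aubin--Lions on bounded sets for strong local convergence and hence a.e.\ convergence, and identification of the weak limits with the expected expressions in $u$. Your Vitali/equi-integrability argument for the critical power $|u_n|^4u_n$ (uniform $L^\infty_tL^{6/5}_x$ bound, hence $L^{6/5}$ on $[0,T]\times\Omega$, plus a.e.\ convergence) is just a slightly different packaging of what the paper does in Lemma~\ref{pr:alme}, where the weak-$*$ limit $N_1$ is identified with $|u|^{\gamma-1}u$ by pointwise convergence and uniqueness of distributional limits; the two mechanisms are interchangeable and equally valid in the endpoint case $\gamma=5$.

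One small imprecision worth fixing: you invoke Aubin--Lions to conclude $u_n\to u$ in $C([0,T],H^{-s}(\R^3))$, but on all of $\R^3$ the embedding $H^1(\R^3)\hookrightarrow H^{-s}(\R^3)$ is not compact, so that version of Aubin--Lions does not apply directly. The standard remedy, which you already use elsewhere, is to localize to balls $B_R$, obtaining $u_n\to u$ in $C([0,T],H^{-s}(B_R))$ for each $R$; evaluating at $t=0$ and letting $R\to\infty$ recovers $u(0)=f$. Alternatively, one can argue as the paper implicitly does: the uniform $W^{1,\infty}_tH^{-1}_x$ bound makes the sequence $u_n$ equi-Lipschitz from $[0,T]$ to $H^{-1}$, which upgrades the weak-$*$ convergence in $L^\infty_tH^1_x$ to pointwise-in-time weak $H^1$ convergence, and then $u_n(0)=f\rightharpoonup u(0)$ gives the initial condition. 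Either patch is routine, and the rest of your argument is sound.
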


In order to set up the compactness argument that proves Proposition \ref{pr:compa} we need a few auxiliary results, as follows.

\begin{lemma}\label{le:aabb}
The sequence $(u_n)_n$ in the assumption of Proposition \ref{pr:compa} is bounded in $L^{\infty}([0,T],H^1(\R^3))$, i.e.,
\begin{equation}\label{uni_bou_n}
\Vert u_n\Vert_{L^{\infty}([0,T],H^1(\R^3))}\;\lesssim_{A,T}\; 1\,,
\end{equation}
and hence, up to a subsequence, $(u_n)_n$  admits a weak-$*$ limit $u$ in $L^{\infty}([0,T],H^1(\R^3))$.
\end{lemma}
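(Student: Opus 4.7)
The proof will be very short: both claims are essentially immediate consequences of results already proved earlier in the paper, combined with a standard functional-analytic fact.

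For the uniform bound \eqref{uni_bou_n}, my plan is to simply invoke the a priori estimate \eqref{eq:uni_bou} of Proposition \ref{pr:5ene}, which was established \emph{uniformly in} $\varepsilon>0$ for any local strong $H^1$-solution of the regularised equation \eqref{eq:visc_nls}. Since each $u_n$ is precisely such a solution (global in fact, by Theorem \ref{t:s} in the sub-critical case and Theorem \ref{t:c} in the critical case), with the same initial datum $f\in H^1(\mathbb{R}^3)$ and the same magnetic potential $A$, the bound
\[
\|u_n\|_{L^\infty([0,T],H^1(\mathbb{R}^3))}\;\lesssim_{A,T}\;1
\]
is uniform in $n$. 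Equivalently, one may cite \eqref{eq:unif_bound} and \eqref{eq:unif_bound_cri}, which already record the $\varepsilon$-uniform form of the same estimate.

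For the existence of a weak-$*$ limit, the plan is to apply sequential Banach--Alaoglu. We identify $L^\infty([0,T],H^1(\mathbb{R}^3))$ with the topological dual of $L^1([0,T],H^{-1}(\mathbb{R}^3))$ (using that $H^1(\mathbb{R}^3)$ is reflexive and separable, so that the duality theorem for Bochner spaces applies). Since $L^1([0,T],H^{-1}(\mathbb{R}^3))$ is separable, any norm-bounded sequence in its dual admits a weak-$*$ convergent subsequence. Applying this to $(u_n)_n$, which is bounded by the first part, yields a subsequence (still denoted $(u_n)_n$) and some $u\in L^\infty([0,T],H^1(\mathbb{R}^3))$ such that $u_n\rightharpoonup^* u$.

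There is no real obstacle here; the only point that deserves a brief sentence in the write-up is the identification of the predual, which is standard but worth citing (e.g.~from a reference on Bochner spaces). The substantive compactness work — showing that the weak-$*$ limit $u$ is in fact a weak $H^1$-solution of the original magnetic NLS and satisfies the initial condition — is reserved for the proof of Proposition \ref{pr:compa}, and will require upgrading this weak-$*$ convergence to strong convergence in suitable space-time norms in order to pass to the limit in the non-linear terms $\mathcal{N}(u_n)$ and in $|A|^2 u_n$, $A\cdot\nabla u_n$.
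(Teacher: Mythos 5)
Your proof is correct and follows exactly the same route as the paper: the uniform bound is a direct citation of Proposition \ref{pr:5ene} (equivalently \eqref{eq:unif_bound}--\eqref{eq:unif_bound_cri}), and the weak-$*$ subsequence is extracted via Banach--Alaoglu. The extra remark on identifying the predual of $L^\infty([0,T],H^1)$ is a reasonable expository addition but does not change the substance.
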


\begin{proof}
An immediate consequence of the uniform-in-$\varepsilon$ bounds \eqref{eq:unif_bound}-\eqref{eq:unif_bound_cri} and the the Banach-Alaoglu Theorem.
\end{proof}

\begin{lemma}\label{lann}
For the sequence $(u_n)_n$ in the assumption of Proposition \ref{pr:compa}
there exist indices $p_i,p_{ij}\in[\frac{6}{5},2]$, $i,j\in\{1,2\}$, such that
\begin{equation}\label{lima_uno}
(A_i\cdot\nabla u_n)_n\mbox{ is a bounded sequence in }L^{\infty}([0,T],L^{p_i}(\R^3))\,,
\end{equation}
\begin{equation}\label{lima_due}
(A_i\cdot A_ju_n)_n\mbox{ is a bounded sequence in }L^{\infty}([0,T],L^{p_{ij}}(\R^3))\,.
\end{equation}
\end{lemma}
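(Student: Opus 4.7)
The plan is to derive both \eqref{lima_uno} and \eqref{lima_due} from a pointwise-in-$t$ H\"older estimate, using the uniform bound $\|u_n(t)\|_{H^1(\R^3)}\lesssim 1$ from Lemma \ref{le:aabb} to control the factor involving $u_n$, combined with the Sobolev embedding $H^1(\R^3)\hookrightarrow L^q(\R^3)$ for $q\in[2,6]$.

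For the first estimate I would set $1/p_i:=1/b_i+1/2$, so that H\"older's inequality pointwise in time yields
\[
\|A_i(t)\cdot\nabla u_n(t)\|_{L^{p_i}(\R^3)}\;\leqslant\;\|A_i(t)\|_{L^{b_i}(\R^3)}\,\|\nabla u_n(t)\|_{L^2(\R^3)}\,.
\]
Since $b_i\in(3,6)$ in the class $\mathcal{A}_1$ and $b_i\in(3,+\infty]$ in $\mathcal{A}_2$, one checks $1/p_i\in[1/2,5/6)$, hence $p_i\in(6/5,2]\subset[6/5,2]$, as required.

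For the second estimate I would apply H\"older with three factors, writing $1/p_{ij}:=1/b_i+1/b_j+1/q_{ij}$ for some $q_{ij}\in[2,6]$ to be chosen, and controlling $\|u_n(t)\|_{L^{q_{ij}}(\R^3)}$ via Sobolev's embedding. The key step is to pick $q_{ij}$ so that $p_{ij}\in[6/5,2]$: when $1/b_i+1/b_j\geqslant 1/3$, which is automatic in $\mathcal{A}_1$ since $b_i,b_j<6$, choose $q_{ij}=6$, yielding $1/p_{ij}\in[1/2,5/6)$; when instead $1/b_i+1/b_j<1/3$, which is possible in $\mathcal{A}_2$ for sufficiently large $b_i,b_j$, choose $q_{ij}$ with $1/q_{ij}=1/2-1/b_i-1/b_j\in(1/6,1/2]$, giving $p_{ij}=2$. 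In either sub-case $p_{ij}\in[6/5,2]$.

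The main obstacle is essentially none beyond bookkeeping: verifying that the H\"older exponents fall into the prescribed range in each sub-case, which is exactly the same kind of routine exponent calculus already carried out in Lemma \ref{le:abcd}. Uniformity in $n$ is immediate from \eqref{uni_bou_n}, while the required control on the time interval $[0,T]$ follows from the $L^{b_i}$-integrability of $A_i(t)$ granted by the assumption $A\in\mathcal{A}_1$ or $A\in\mathcal{A}_2$.
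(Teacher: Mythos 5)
Your argument is correct and follows essentially the same route as the paper: a pointwise-in-time H\"older estimate, with $p_i=b_i*2$ for the gradient term and $p_{ij}=b_i*b_j*M_{ij}$ (your $q_{ij}$) for the quadratic term, combined with the uniform $H^1$ bound \eqref{uni_bou_n} and Sobolev's embedding; you simply spell out the exponent bookkeeping that the paper leaves implicit. The only small imprecision is in your closing remark: the time-uniformity comes not from the spatial $L^{b_i}$-integrability of $A_i$ per se but from the fact that $A\in\mathcal R$ forces $A_i\in AC_{\mathrm{loc}}(\R,L^{b_i})\subset L^\infty_{\mathrm{loc}}(\R,L^{b_i})$, which is what makes the $L^\infty([0,T])$-in-time bound on $A_i$ legitimate.
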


\begin{proof}
For $p_i:=b_i*2\in[\frac{6}{5},2]$, $i\in\{1,2\}$, the bound \eqref{uni_bou_n} and H\"older's inequality give
$$\Vert A_i\cdot\nabla u_n\Vert_{L^{\infty}([0,T],L^{p_i}(\R^3))}\;\lesssim\;\Vert A_i\Vert_{L^{\infty}([0,T],L^{b_i}(\R^3))}\Vert \nabla u_n\Vert_{L^{\infty}([0,T],L^2(\R^3))}\;\lesssim_{A,T}\; 1\,,$$
which proves \eqref{lima_uno}. Moreover, there exist $M_{ij}\in[2,6]$, $i,j\in\{1,2\}$, such that $p_{ij}:=b_i*b_j*M_{ij}\in[\frac{6}{5},2]$, therefore the bound \eqref{uni_bou_n}, H\"older's inequality, and Sobolev's embedding give
\begin{gather*}
\Vert A_i\cdot A_j u_n\Vert_{L^{\infty}([0,T],L^{p_{ij}}(\R^3))}\lesssim\\
\Vert A_i\Vert_{L^{\infty}([0,T],L^{b_i}(\R^3))}\Vert A_j\Vert_{L^{\infty}([0,T],L^{b_j}(\R^3))}\Vert  u_n\Vert_{L^{\infty}([0,T],L^{M_{ij}}(\R^3))}\lesssim_{A,T} 1,
\end{gather*}
which proves \eqref{lima_due}.
\end{proof}

%Now we state a similar result for the non-linear terms.

\begin{lemma}\label{linn}
For the sequence $(u_n)_n$ in the assumption of Proposition \ref{pr:compa}, and for every $\gamma\in(1,5]$ and $\alpha\in(1,3)$, there exist indices $p(\gamma),\widetilde{p}(\alpha)\in[\frac{6}{5},2]$ such that
\begin{equation}\label{lima_tre}
 \big(|u_n|^{\gamma-1}u_n\big)_n\mbox{ is a bounded sequence in }L^{\infty}([0,T],L^{p(\gamma)}(\R^3))\,,
\end{equation}
\begin{equation}\label{lima_quattro}
 \big((\,|\cdot|^{-\alpha}*|u_n|^2)u_n\big)_n\mbox{ is a bounded sequence in }L^{\infty}([0,T],L^{\widetilde{p}(\alpha)}(\R^3))\,.
\end{equation}

\end{lemma}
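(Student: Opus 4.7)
The plan is to combine the uniform $H^1$-bound \eqref{uni_bou_n}, Sobolev's embedding $H^1(\R^3)\hookrightarrow L^q(\R^3)$ for $q\in[2,6]$, H\"older's inequality, and the Hardy--Littlewood--Sobolev inequality, much in the spirit of the previous Lemma \ref{lann}. The essence of the argument is purely algebraic: one has to choose indices so that the three-way constraint (Sobolev range, H.--L.--S.\ applicability, target exponent in $[\frac{6}{5},2]$) is simultaneously met for every $\gamma$ and $\alpha$ in the stated range.

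For the local non-linearity I would start from the identity
\[
 \||u_n|^{\gamma-1}u_n\|_{L^{p(\gamma)}(\R^3)}\;=\;\|u_n\|_{L^{\gamma p(\gamma)}(\R^3)}^{\gamma}
\]
and look for $p(\gamma)\in[\tfrac65,2]$ such that $\gamma p(\gamma)\in[2,6]$. The corresponding interval $[\max\{\tfrac65,\tfrac{2}{\gamma}\},\min\{2,\tfrac{6}{\gamma}\}]$ is non-empty for every $\gamma\in(1,5]$, degenerating to $\{\tfrac65\}$ at the critical endpoint $\gamma=5$ and to $\{2\}$ in the formal limit $\gamma=1$. Sobolev's embedding followed by \eqref{uni_bou_n} then yields \eqref{lima_tre}.

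For the Hartree term I would split via H\"older,
\[
 \big\|(\,|\cdot|^{-\alpha}*|u_n|^2)u_n\big\|_{L^{\widetilde{p}(\alpha)}}\;\leqslant\;\big\||\cdot|^{-\alpha}*|u_n|^2\big\|_{L^r}\|u_n\|_{L^s}\,,\qquad \tfrac{1}{\widetilde{p}(\alpha)}=\tfrac{1}{r}+\tfrac{1}{s}\,,
\]
and bound the convolution through H.--L.--S., $\||\cdot|^{-\alpha}*|u_n|^2\|_{L^r}\lesssim \|u_n\|_{L^{2q}}^2$, provided $q>1$ and $\tfrac{1}{r}=\tfrac{1}{q}+\tfrac{\alpha}{3}-1\in(0,1)$. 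One then chooses $s\in[2,6]$ and $2q\in[2,6]$ (for Sobolev's embedding to apply on both $u_n$-factors) in such a way that $\widetilde{p}(\alpha)\in[\tfrac65,2]$. For $\alpha\in(1,3)$ the resulting system is compatible: e.g.\ taking $s=2$ and $q$ slightly above $1$ works when $\alpha$ is close to $3$, while $q$ close to $\tfrac{3}{2}$ works when $\alpha$ is close to $1$, and intermediate $\alpha$ are handled by continuously interpolating. Combining the H.--L.--S.\ bound with Sobolev and \eqref{uni_bou_n} produces \eqref{lima_quattro}.

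The main (mild) obstacle is precisely the bookkeeping just sketched: verifying that for every admissible $(\gamma,\alpha)$ one can pick admissible $(p(\gamma),\widetilde{p}(\alpha),r,s,q)$ meeting all three constraints at once. Nothing deeper is involved, as the nonlinearity is here being controlled at fixed time by the uniform $H^1$-bound already established in Lemma \ref{le:aabb}.
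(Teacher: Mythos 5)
Your argument for the local non-linearity is exactly the paper's: the choice $p(\gamma)=M/\gamma$ with $M\in[2,6]$ satisfying $M/\gamma\in[\tfrac65,2]$, followed by Sobolev embedding and the uniform $H^1$-bound. For the Hartree term, the structure is again the same as the paper's -- H\"older split, Hardy--Littlewood--Sobolev on the convolution, Sobolev embedding on both $u_n$-factors, all controlled by \eqref{uni_bou_n} -- and the bookkeeping does indeed close for every $\alpha\in(0,3)$.

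One concrete slip in the illustration: with $s=2$, the constraint $\widetilde p(\alpha)\in[\tfrac65,2]$ forces $\tfrac1r\in(0,\tfrac13]$, and since $\tfrac1q=\tfrac1r+\tfrac{3-\alpha}{3}$ you then need $\tfrac1q\in(\tfrac{3-\alpha}{3},\tfrac{4-\alpha}{3}]\cap[\tfrac13,1)$. When $\alpha$ is close to $3$, this interval collapses to a neighbourhood of $\tfrac13$, so $q$ must be taken \emph{close to $3$}, not slightly above $1$; taking $q\approx 1$ there gives $r\approx 1$ and $\widetilde p\approx\tfrac23\notin[\tfrac65,2]$. (It is $\alpha$ near $1$ for which $q$ slightly above $1$ works.) In the paper's notation $m(\alpha)$ must lie in $\bigl(\tfrac{3}{4-\alpha},\min\{3,\tfrac{3}{3-\alpha}\}\bigr)$, which is the precise statement of this constraint. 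The overall argument is unaffected -- the interval is always non-empty -- but the specific example you give for $\alpha$ near $3$ should be corrected.
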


\begin{proof}
For any $\gamma\in(1,5]$ there exists $M:=M(\gamma)\in[2,6]$ such that $M/\gamma\in[\frac{6}{5},2]$, whence
\[
 \begin{split}
  \Vert |u_n|^{\gamma-1}u\Vert_{L^{\infty}([0,T],L^{M/\gamma}(\mathbb{R}^3))}\;&\leqslant\; \Vert u_n\Vert_{L^{\infty}([0,T],L^M(\mathbb{R}^3))}^{\gamma} \\
  &\lesssim\;\Vert u_n\Vert_{L^{\infty}([0,T],H^1(\mathbb{R}^3))}^{\gamma}\;\lesssim_{A,T}\;1\,,
 \end{split}
\]
based on the bound \eqref{uni_bou_n} and Sobolev's embedding, which proves \eqref{lima_tre}, with $p(\gamma):=M/\gamma$.
Next, let us use the Hardy-Littlewood-Sobolev inequality, for $m(\alpha)\in(1,\frac{3}{3-\alpha})$ and $g\in L^{m(\alpha)}(\mathbb{R}^3)$,
$$\big\Vert\,|\cdot|^{-\alpha}*g\,\big\Vert_{L^{q(m(\alpha))}(\mathbb{R}^3)}\;\lesssim\; \Vert g\Vert_{L^{m(\alpha)}(\mathbb{R}^3)}\,,\qquad q(m):=\textstyle\frac{3m(\alpha)}{3-(3-\alpha)m(\alpha)}\,.$$
Taking
\begin{equation}\label{qm}
 \begin{array}{ll}
  m(\alpha)\in(1,{\textstyle\frac{3}{3-\alpha}})\;&\quad\textrm{if }\alpha\in(0,2] \\
  m(\alpha)\in(1,3]\;&\quad\textrm{if }\alpha\in(2,3)\,,
 \end{array}
\end{equation}
the Hardy-Littlewood-Sobolev inequality above and Sobolev's embedding yield
\begin{equation}\label{mangf}
\begin{split}
 \big\Vert\,|\cdot|^{-\alpha}*|u|^2\big\Vert_{L^{\infty}([0,T],L^{q(m(\alpha))}(\mathbb{R}^3))}\;&\lesssim\; \Vert u^2
\Vert_{L^{\infty}([0,T],L^{m(\alpha)}(\mathbb{R}^3))} \\
&\lesssim\; \Vert u\Vert_{L^{\infty}([0,T],H^1(\mathbb{R}^3))}^2\,.
\end{split}
\end{equation}
Since $\frac{3}{4-\alpha}<1$ for $\alpha\in(0,3)$, we can find $m(\alpha)$ that satisfies \eqref{qm} as well as $q(m(\alpha))*2\in[\frac{6}{5},2]$, namely
\begin{equation}
m(\alpha)\;\in\;\Big(\frac{3}{4-\alpha},\frac{3}{3-\alpha}\Big)\,.
\end{equation}
As a consequence, for $\widetilde{p}(\alpha):=q(m(\alpha))*2\in[\frac{6}{5},2]$ one has 
\[
 \begin{split}
  \big\Vert(\,|\cdot|^{-\alpha}&*|u_n|^2)u_n\big\Vert_{L^{\infty}([0,T],L^{\widetilde{p}(\alpha)}(\mathbb{R}^3))}\;\lesssim \\
  &\lesssim\;\big\Vert\,|\cdot|^{-\alpha}*|u_n|^2\big\Vert_{L^{\infty}([0,T],L^{q(m(\alpha))}(\mathbb{R}^3))}\,\Vert u_n\Vert_{L^{\infty}([0,T],L^{2}(\mathbb{R}^3))} \\
  &\lesssim_{A,T}\;\Vert u_n\Vert_{L^{\infty}([0,T],H^{1}(\mathbb{R}^3))}\;\lesssim_{A,T}\; 1\,,
 \end{split}
\]
based on H\"{o}lder's inequality (first step), the bound \eqref{mangf} (second step), and Sobolev's embedding (third step),  which proves \eqref{lima_quattro}.
\end{proof}

\begin{corollary}\label{cor:cor}
For the sequence $(u_n)_n$ in the assumption of Proposition \ref{pr:compa} there exist indices $p_i$, $p_{ij}$, $p(\gamma)$, and $\widetilde{p}(\alpha)$ in $[\frac{6}{5},2]$, and there exists functions 
$X_i\in L^{\infty}([0,T],L^{p_i}(\R^3)) $, $Y_{ij}\in L^{\infty}([0,T],L^{p_{ij}}(\R^3))$, $N_1\in L^{\infty}([0,T],L^{p(\gamma)}(\R^3))$, and $N_2\in L^{\infty}([0,T],L^{\widetilde{p}(\alpha)}(\R^3))$ such that
\begin{eqnarray}
 A_i\cdot\nabla u_n\!\!\!&\rightarrow&\!\!\! X_i\qquad\mbox{weakly-$*$}\mbox{ in } L^{\infty}([0,T],L^{p_i}(\R^3)) \label{eq:cADu}\\
 A_i\cdot A_ju_n\!\!\!&\rightarrow&\!\!\! Y_{ij}\qquad\mbox{weakly-$*$}\mbox{ in } L^{\infty}([0,T],L^{p_{ij}}(\R^3)) \label{eq:cAAu} \\
 |u_n|^{\gamma-1}u_n\!\!\!&\rightarrow&\!\!\! N_1\qquad\mbox{weakly-$*$}\mbox{ in } L^{\infty}([0,T],L^{p(\gamma)}(\R^3)) \label{eq:cuuu}\\
 (|\cdot|^{-\alpha}*|u_n|^2)u_n\!\!\!&\rightarrow&\!\!\! N_2\qquad\mbox{weakly-$*$}\mbox{ in }L^{\infty}([0,T],L^{\widetilde{p}(\alpha)}(\R^3))\,. \label{eq:calphau}
\end{eqnarray}
\end{corollary}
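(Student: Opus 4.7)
The plan is to deduce Corollary \ref{cor:cor} as an immediate consequence of the uniform bounds already established in Lemmas \ref{lann} and \ref{linn}, via a sequential Banach--Alaoglu extraction. No new analytic work is required; these Lemmas supply exactly the hypotheses that are needed.

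First I would note that for every $p\in(1,+\infty)$ -- and in particular for every $p\in[\tfrac{6}{5},2]$ -- the space $L^{\infty}([0,T],L^{p}(\R^3))$ is isometrically the continuous dual of the separable Banach space $L^{1}([0,T],L^{p'}(\R^3))$, where $p'\in[2,6]$. This is the standard duality for Bochner spaces, which applies because the reflexive spaces $L^{p'}(\R^3)$ enjoy the Radon--Nikod\'ym property. In particular, bounded sets in $L^{\infty}([0,T],L^{p}(\R^3))$ are sequentially weakly-$*$ compact.

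Next I would apply Lemma \ref{lann} to get the uniform bounds \eqref{lima_uno} and \eqref{lima_due} for $(A_i\cdot\nabla u_n)_n$ and $(A_i\cdot A_ju_n)_n$, and Lemma \ref{linn} to get the bounds \eqref{lima_tre} and \eqref{lima_quattro} for $(|u_n|^{\gamma-1}u_n)_n$ and $((|\cdot|^{-\alpha}*|u_n|^2)u_n)_n$. A standard diagonal extraction then produces one common subsequence of $(u_n)_n$ along which all four sequences converge weakly-$*$ in the appropriate $L^{\infty}_tL^p_x$ spaces. Calling the limits $X_i$, $Y_{ij}$, $N_1$, $N_2$ yields \eqref{eq:cADu}--\eqref{eq:calphau}; the membership of each limit in the corresponding space follows from lower semi-continuity of the norm under weak-$*$ convergence.

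The only mild subtlety -- not really an obstacle -- is ensuring that the \emph{sequential} version of Banach--Alaoglu is available, which is why the separability of the preduals is recorded above; this is automatic since all Lebesgue exponents involved lie strictly between $1$ and $+\infty$. In essence, Corollary \ref{cor:cor} is just a reformulation of Lemmas \ref{lann} and \ref{linn} in the language of weak-$*$ compactness, and the real content has already been extracted in those Lemmas.
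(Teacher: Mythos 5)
Your proposal is correct and coincides with the paper's own proof, which simply invokes Lemmas \ref{lann} and \ref{linn} together with Banach--Alaoglu. The extra remarks on separability of the preduals and diagonal extraction make explicit what the paper leaves implicit, but the argument is the same.
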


\begin{proof}
 An immediate consequence of Lemmas \ref{lann} and \ref{linn}, using the Banach-Alaoglu Theorem.
\end{proof}

\begin{lemma}
For the sequence $(u_n)_n$ in the assumption of Proposition \ref{pr:compa}, for the corresponding weak limit $u$ identified in Lemma  \ref{le:aabb}, and for the exponents $p_i$, $i\in\{1,2\}$ identified in Corollary \ref{cor:cor}, one has
\begin{equation}\label{eq:cADuF}
 A_i\cdot\nabla u_n\;\rightarrow\;A_i\cdot\nabla u\qquad\textrm{weakly in } L^{2}([0,T],L^{p_i}(\R^3))\,.
\end{equation}
\end{lemma}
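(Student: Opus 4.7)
The plan is first to identify the weak-$*$ limit $X_i$ of Corollary \ref{cor:cor} with the distribution $A_i\cdot\nabla u$, and then to upgrade weak-$*$ convergence in $L^{\infty}([0,T],L^{p_i}(\R^3))$ to weak convergence in $L^{2}([0,T],L^{p_i}(\R^3))$ by exploiting the finiteness of $T$.

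For the identification, I would test against an arbitrary $\phi\in C_c^\infty((0,T)\times\R^3)$ and pass to the limit in the trivial identity
\begin{equation*}
\int_0^T\!\!\int_{\R^3}(A_i\cdot\nabla u_n)\,\phi\,\ud x\,\ud t\;=\;\int_0^T\!\!\int_{\R^3}\nabla u_n\cdot (A_i\,\phi)\,\ud x\,\ud t\,.
\end{equation*}
The left-hand side converges to $\int_0^T\!\!\int X_i\phi$ by \eqref{eq:cADu}, since $\phi\in L^1([0,T],L^{p_i'}(\R^3))$ due to its compact support. For the right-hand side, Lemma \ref{le:aabb} yields in particular $\nabla u_n\rightarrow\nabla u$ weakly-$*$ in $L^\infty([0,T],L^2(\R^3))$, so it suffices to show that $A_i\phi\in L^1([0,T],L^2(\R^3))$. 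But $\phi$ is smooth and compactly supported, so H\"older's inequality in space with exponents $(b_i,\tfrac{2b_i}{b_i-2})$ gives $\|A_i(t)\phi(t)\|_{L^2_x}\lesssim_\phi\|A_i(t)\|_{L^{b_i}_x}$, and integrability in time is ensured by $A_i\in L^{a_i}_{\mathrm{loc}}(\R,L^{b_i}(\R^3))$ with $a_i>1$. Passing to the limit yields $\int_0^T\!\!\int\nabla u\cdot(A_i\phi)=\int_0^T\!\!\int(A_i\cdot\nabla u)\phi$, so that $X_i=A_i\cdot\nabla u$ in $\mathcal{D}'$ and hence as an element of $L^\infty([0,T],L^{p_i}(\R^3))$.

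To conclude, I would observe that since $T<+\infty$ H\"older's inequality in time gives the continuous embedding $L^{2}([0,T],L^{p_i'}(\R^3))\hookrightarrow L^{1}([0,T],L^{p_i'}(\R^3))$. Therefore every test function in the predual of $L^{2}([0,T],L^{p_i}(\R^3))$ is already admissible for the weak-$*$ topology of $L^\infty([0,T],L^{p_i}(\R^3))$, and the weak-$*$ convergence $A_i\cdot\nabla u_n\rightharpoonup A_i\cdot\nabla u$ proved above implies the desired weak convergence \eqref{eq:cADuF}. Boundedness of $(A_i\cdot\nabla u_n)_n$ in $L^2([0,T],L^{p_i}(\R^3))$, needed to make the weak-convergence statement meaningful, follows at once from Lemma \ref{lann} and the finiteness of $T$.

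The main (and really the only) non-routine point is the duality argument in the first step, which requires carefully pairing the Lebesgue exponents so that $A_i\phi\in L^1_tL^2_x$ for $\phi\in C_c^\infty$; once the exponents are aligned, the rest of the argument is abstract weak-compactness bookkeeping, with no further use of the detailed structure of $u_n$ or the equation it solves.
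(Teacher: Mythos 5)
Your proof is correct, and its core idea coincides with the paper's: move $A_i$ onto the test function via H\"older so that the resulting pairing can be handled by the weak (or weak-$*$) convergence of $\nabla u_n$ to $\nabla u$ in an $L^2$-valued space. The two arguments differ only in packaging. The paper tests directly against an arbitrary $\eta\in L^2([0,T],L^{p_i'}(\R^3))$, using that $p_i'*b_i=2$ forces $A_i\eta\in L^2([0,T],L^2(\R^3))$, and pairs with $\nabla u_n\rightharpoonup\nabla u$ weakly in $L^2([0,T],L^2(\R^3))$; this yields \eqref{eq:cADuF} in one shot. You instead first test against $C_c^\infty$ to identify the weak-$*$ limit $X_i$ of Corollary \ref{cor:cor} with the distribution $A_i\cdot\nabla u$, and then upgrade the weak-$*$ convergence in $L^\infty([0,T],L^{p_i})$ to weak convergence in $L^2([0,T],L^{p_i})$ via the finite-interval embedding $L^2_t L^{p_i'}_x\hookrightarrow L^1_t L^{p_i'}_x$. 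Both routes are sound; the paper's is marginally more economical for the stated lemma, while yours has the minor side benefit of already establishing the identification $X_i=A_i\cdot\nabla u$ that the paper re-derives in the proof of Lemma \ref{pr:alme}. One small remark: the passage from $u_n\rightharpoonup^* u$ in $L^\infty_t H^1_x$ to $\nabla u_n\rightharpoonup^*\nabla u$ in $L^\infty_t L^2_x$ that you invoke from Lemma \ref{le:aabb} deserves a one-line justification (test against $\eta\in L^1_t L^2_x$ and use that $-\nabla\cdot\eta\in L^1_t H^{-1}_x$), but this is routine and matches the same implicit step in the paper.
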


\begin{proof}
Because of the bound \eqref{uni_bou_n}, up to a subsequence
$$\nabla u_n\rightarrow \nabla u\qquad\mbox{weakly in }L^2([0,T],L^2(\R^3))\,.$$
Now, since $p_i=b_i*2$ and hence $p_i'*b_i=2$, and since $A_i\in L^\infty([0,T]L^{b_i}(\R^3))$, one has $A_i\eta\in L^2([0,T],L^2(\R^3))$ for any  $\eta\in L^{2}([0,T],L^{p'_i}(\mathbb{R}^3))$. Then
$$\int_0^T\!\!\int_{\R^3}A_i\cdot(\nabla u_n-\nabla u)\overline{\eta}\,\ud x\,\ud t\;=\;\int_0^T\!\!\int_{\R^3}(\nabla u_n-\nabla u)A_i\overline{\eta}\,\ud x\,\ud t\;\rightarrow\; 0\,,$$
thus concluding the proof.
\end{proof}

%For the other terms, we can use the following compactness results.

\begin{lemma}
Let $\Omega$ be an open, bounded subset of $\R^3$ and let $M\in[1,+\infty]$. For the sequence $(u_n)_n$ in the assumption of Proposition \ref{pr:compa}, and for the corresponding weak limit $u$ identified in Lemma  \ref{le:aabb},
\begin{equation}\label{eq:unuO}
 u_n|_\Omega\rightarrow u|_\Omega\qquad\mbox{strongly in }L^M([0,T],L^4(\Omega))\,.
\end{equation}
\end{lemma}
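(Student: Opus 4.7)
The plan is to apply a version of the Aubin--Lions--Simon compactness lemma with the Gelfand triple $H^1(\Omega)\hookrightarrow\hookrightarrow L^4(\Omega)\hookrightarrow H^{-1}(\Omega)$, whose compact embedding on the bounded set $\Omega$ follows from Rellich--Kondrachov (possibly enlarging $\Omega$ to a smooth bounded open superset and then restricting).

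First I would verify the spatial bound: $(u_n|_\Omega)_n$ is bounded in $L^\infty([0,T],H^1(\Omega))$ by restriction of \eqref{uni_bou_n} from Lemma \ref{le:aabb}. Next, I would extract a uniform bound on $(\partial_t u_n)_n$ in $L^\infty([0,T],H^{-1}(\Omega))$ directly from the equation
\[
\ii\,\partial_t u_n\;=\;-(1-\ii\,\eps_n)(\Delta u_n-2\,\ii\,A\cdot\nabla u_n-|A|^2u_n)+|u_n|^{\gamma-1}u_n+(|\cdot|^{-\alpha}*|u_n|^2)u_n\,.
\]
The Laplacian term is controlled in $L^\infty([0,T],H^{-1}(\R^3))$ by the $H^1$-bound on $u_n$; the magnetic terms $A\cdot\nabla u_n$ and $|A|^2 u_n$ are bounded in $L^\infty([0,T],L^{p}(\R^3))$ with $p\in[\tfrac{6}{5},2]$ by Lemma \ref{lann}; the non-linearities are bounded in the same type of space by Lemma \ref{linn}. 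Since $H^1(\R^3)\hookrightarrow L^{p'}(\R^3)$ for $p'\in[2,6]$, by duality each $L^p(\R^3)$ with $p\in[\tfrac{6}{5},2]$ embeds continuously in $H^{-1}(\R^3)$, so summing the contributions yields the desired time-derivative bound, which of course restricts to $H^{-1}(\Omega)$.

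With both bounds in hand, the Aubin--Lions--Simon lemma --- in the version asserting that an $L^\infty$-in-time bound in $H^1(\Omega)$ together with any $L^r$-in-time bound ($r>1$) on the derivative in $H^{-1}(\Omega)$ yields relative compactness in $\mathcal{C}([0,T],L^4(\Omega))$ --- provides a subsequence converging strongly in $\mathcal{C}([0,T],L^4(\Omega))$, hence \emph{a fortiori} in $L^M([0,T],L^4(\Omega))$ for every $M\in[1,+\infty]$. The limit must coincide with $u|_\Omega$ because $u_n\to u$ weakly-$*$ in $L^\infty([0,T],H^1(\R^3))$, so restriction to $\Omega$ forces the two limits to agree by a standard uniqueness argument.

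The only step that requires genuine care is the time-derivative bound: one must verify that every right-hand-side contribution, including the Hartree non-linearity and the critical power $\gamma=5$, lies in a Lebesgue space that embeds into $H^{-1}$. This is precisely what Lemmas \ref{lann}--\ref{linn} provide, combined with the duality embedding $L^p\hookrightarrow H^{-1}$ for $p\geqslant\tfrac{6}{5}$; no additional obstruction arises because all the available non-linear estimates are already $L^\infty$-in-time.
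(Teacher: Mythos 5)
Your proof is correct and follows essentially the same route as the paper: bound $(u_n)_n$ in $L^\infty_t H^1$, bound $(\partial_t u_n)_n$ in $L^\infty_t H^{-1}$ by reading the derivative off the equation, and then invoke Aubin--Lions--Simon with the triple $H^1(\Omega)\hookrightarrow\hookrightarrow L^4(\Omega)\hookrightarrow H^{-1}(\Omega)$. The only cosmetic difference is how the time-derivative bound is justified: the paper directly cites the pre-established estimate \eqref{H1H-1} for $(\nabla-\ii A)^2:H^1\to H^{-1}$ together with the boundedness of $\mathcal{N}:H^1\to H^{-1}$, while you re-derive these facts term by term from Lemmas~\ref{lann}--\ref{linn} and the duality embedding $L^p(\R^3)\hookrightarrow H^{-1}(\R^3)$ for $p\in[\frac{6}{5},2]$; the two derivations are equivalent, since \eqref{H1H-1} is itself obtained from exactly this kind of H\"older/Sobolev argument. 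One point in your favour worth noting: the statement includes $M=+\infty$, and for the $L^\infty_t$ (indeed $\mathcal C_t$) conclusion one needs Simon's version of the lemma with the derivative controlled in $L^r_tH^{-1}$ for some $r>1$, which you invoke explicitly via the $L^\infty_t H^{-1}$ bound on $\partial_t u_n$; the paper records only the (weaker) $L^1_t H^{-1}(\Omega)$ restriction in \eqref{uni:der}, although the stronger $L^\infty$ bound \eqref{uni:dersuR3} is available there as well, so the gap is only in the exposition of the paper rather than in the mathematics.
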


\begin{proof}
Because of the \eqref{uni_bou_n}, $(u_n|)_n$ is a bounded sequence in $L^M([0,T],H^1(\R^3))$ for any $M\in[1,+\infty]$.
Moreover, for every time $t\in[0,T]$ $u_n$ satisfies
\[
 \ii\,\partial_t u_n\;=\;-(1-\ii\,\varepsilon)(\Delta u_n-2\,\ii\,A\cdot\nabla u_n-|A|^2u_n)+\mathcal{N}(u_n)
\]
as an identity between $H^{-1}$ functions.
Hence, owing to the estimate \eqref{H1H-1} and to the boundedness of the map $\mathcal{N}(u):H^1(\mathbb{R}^3)\to H^{-1}(\mathbb{R}^3)$,
\begin{equation}\label{uni:dersuR3}
 \begin{aligned}
 \Vert&\partial_tu_n\Vert_{L^\infty([0,T],H^{-1}(\mathbb{R}^3))}\; \lesssim\;\\
 & \lesssim\;\Vert(\nabla-\ii\,A)^2u_n\Vert_{L^{\infty}([0,T],H^{-1}(\mathbb{R}^3))}+\Vert \mathcal{N}(u_n)\Vert_{L^{\infty}([0,T],H^{-1}(\mathbb{R}^3))}\\
&\lesssim_A\,\Vert u_n\Vert_{L^{\infty}([0,T],H^1(\mathbb{R}^3))}\;\lesssim_{A,T}\; 1\,.
\end{aligned}
\end{equation}
In particular,
\begin{equation}\label{uni:der}
 \begin{aligned}
 \Vert&\partial_tu_n|_\Omega\Vert_{L^1([0,T],H^{-1}(\Omega))}\; \lesssim_{A,T}\; 1\,.
\end{aligned}
\end{equation}
Therefore \eqref{eq:unuO} follows as an application of Aubin-Lions compactness lemma (see, e.g., \cite[Section 7.3]{Roubicek_NPDEbook}) to the bound \eqref{uni:der} and with respect to the compact inclusion $H^1(\Omega)\hookrightarrow L^4(\Omega)$ and the continuous inclusion 
$L^4(\Omega)\hookrightarrow H^{-1}(\Omega)$.
\end{proof}

\begin{lemma}\label{pr:alme}
For the limit function $u$ identified in Lemma  \ref{le:aabb} and for the limit functions $X_i$, $Y_{ij}$, and $N_i$ identified in Corollary \ref{cor:cor} one has the pointwise identities for $t\in[0,T]$ and a.e.~$x\in\mathbb{R}^3$:
\begin{eqnarray}
 A_i\cdot\nabla u\!\!\!&=&\!\!\!X_i \label{mufy1} \\
 A_i\cdot A_ju\!\!\!&=&\!\!\!Y_{ij} \label{mufy2} \\
 |u|^{\gamma -1}u\!\!\!&=&\!\!\!N_1  \label{mufy3} \\
  \big(\,|\cdot|^{-\alpha}*|u|^2\big)u\!\!\!&=&\!\!\!N_2\,. \label{mufy4}
\end{eqnarray}
\end{lemma}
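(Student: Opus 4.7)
The strategy is uniform across the four identities: combine the weak-$*$ limits produced by Corollary \ref{cor:cor} with a pointwise (a.e.) limit obtained from the local strong convergence $u_n|_\Omega \to u|_\Omega$ in $L^M([0,T],L^4(\Omega))$, and invoke the elementary fact that a uniformly $L^p$-bounded sequence which converges both almost everywhere and weakly (or weakly-$*$) must have equal a.e.\ and weak limits.

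First I would extract, by a diagonal argument along an exhaustion $\Omega_k\uparrow\R^3$ of bounded open sets and using \eqref{eq:unuO} with $M=1$, a (further) subsequence — still denoted $(u_n)_n$ — such that $u_n\to u$ almost everywhere on $[0,T]\times\R^3$. The identity \eqref{mufy1}, $X_i=A_i\cdot\nabla u$, is then immediate: the previous Lemma gave $A_i\cdot\nabla u_n\to A_i\cdot\nabla u$ weakly in $L^2([0,T],L^{p_i}(\R^3))$, while Corollary \ref{cor:cor} gave the weak-$*$ convergence $A_i\cdot\nabla u_n\to X_i$ in $L^\infty([0,T],L^{p_i}(\R^3))$. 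Testing both limits against any $\varphi\in C_c^\infty((0,T)\times\R^3)$, which lies in the predual of both spaces, forces $X_i=A_i\cdot\nabla u$ a.e.

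For \eqref{mufy2} and \eqref{mufy3} I would use the a.e.\ convergence $u_n\to u$ to deduce, by continuity of the maps $v\mapsto A_i\cdot A_j v$ and $v\mapsto|v|^{\gamma-1}v$, the a.e.\ convergences $A_i\cdot A_j u_n\to A_i\cdot A_j u$ and $|u_n|^{\gamma-1}u_n\to|u|^{\gamma-1}u$. Combined with the uniform $L^\infty([0,T],L^{p_{ij}})$ and $L^\infty([0,T],L^{p(\gamma)})$ bounds from Lemmas \ref{lann} and \ref{linn} together with the weak-$*$ convergences \eqref{eq:cAAu} and \eqref{eq:cuuu}, the standard uniqueness principle (weak-$*$ limit = a.e.\ limit for bounded sequences in $L^\infty_tL^p_x$, $1\leqslant p<\infty$, verified by testing against $C_c^\infty$ functions and passing via dominated convergence) yields $Y_{ij}=A_i\cdot A_j u$ and $N_1=|u|^{\gamma-1}u$ almost everywhere.

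The main obstacle is \eqref{mufy4}, owing to the non-locality of the Hartree term. My plan is to split it as two successive limits. Using the pointwise bound $|u_n|^2\to|u|^2$ a.e., the uniform control $\||u_n|^2\|_{L^\infty([0,T],L^{m(\alpha)}(\R^3))}\lesssim 1$ with $m(\alpha)$ as in \eqref{qm}, and a standard truncation argument separating the kernel $|x|^{-\alpha}$ into its singular part near the origin and its tail (handled by the Hardy-Littlewood-Sobolev inequality from the proof of Lemma \ref{linn}), I would obtain $(|\cdot|^{-\alpha}*|u_n|^2)\to(|\cdot|^{-\alpha}*|u|^2)$ a.e., with the sequence uniformly bounded in $L^\infty([0,T],L^{q(m(\alpha))}(\R^3))$. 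Multiplying by $u_n$, which itself converges a.e.\ to $u$, gives $(|\cdot|^{-\alpha}*|u_n|^2)u_n\to(|\cdot|^{-\alpha}*|u|^2)u$ a.e., and the uniform $L^\infty([0,T],L^{\widetilde p(\alpha)}(\R^3))$ bound together with the weak-$*$ convergence \eqref{eq:calphau} concludes, by the same uniqueness principle, that $N_2=(|\cdot|^{-\alpha}*|u|^2)u$ almost everywhere.
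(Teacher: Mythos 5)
Your proposal is correct and follows essentially the same route as the paper: for \eqref{mufy1}, equate the weak $L^2_tL^{p_i}_x$ limit of $A_i\cdot\nabla u_n$ with the weak-$*$ limit from Corollary~\ref{cor:cor} by testing against smooth compactly supported functions; for the other three identities, extract a subsequence converging pointwise a.e.~on $[0,T]\times\R^3$ from the local strong convergence \eqref{eq:unuO} and identify the a.e.~limit with the weak-$*$ limit. The only place you are more explicit than the paper is \eqref{mufy4}: the paper simply asserts the a.e.~convergence of the Hartree term, while you flag that the non-locality of the convolution means a.e.~convergence of $u_n$ does not transfer trivially, and you propose a kernel truncation (small-ball part handled by strong local convergence, tail controlled uniformly in $n$ via Young/Hardy--Littlewood--Sobolev); this is a legitimate and welcome filling-in of a detail the paper leaves implicit, not a different method.
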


\begin{proof}
For the sequence $(u_n)_n$ in the assumption of Proposition \ref{pr:compa}, and for the exponents $p_i$, $i\in\{1,2\}$ identified in Corollary \ref{cor:cor}, one has
\begin{equation}%\label{eq:cADuF}
 A_i\cdot\nabla u_n\;\rightarrow\;A_i\cdot\nabla u\qquad\textrm{weakly in } L^{2}([0,T],L^{p_i}(\R^3))\,.
\end{equation}
Indeed, because of the bound \eqref{uni_bou_n}, up to a subsequence
$$\nabla u_n\rightarrow \nabla u\qquad\mbox{weakly in }L^2([0,T],L^2(\R^3))\,;$$
therefore, since $p_i=b_i*2$ and hence $p_i'*b_i=2$, and since $A_i\in L^\infty([0,T]L^{b_i}(\R^3))$, one has $A_i\eta\in L^2([0,T],L^2(\R^3))$ for any  $\eta\in L^{2}([0,T],L^{p'_i}(\mathbb{R}^3))$,
$$\int_0^T\!\!\int_{\R^3}A_i\cdot(\nabla u_n-\nabla u)\overline{\eta}\,\ud x\,\ud t\;=\;\int_0^T\!\!\int_{\R^3}(\nabla u_n-\nabla u)A_i\overline{\eta}\,\ud x\,\ud t\;\rightarrow\; 0\,.$$
The limits \eqref{eq:cADu} and \eqref{eq:cADuF} imply
\[
 \begin{split}
  \int_0^T\!\!\int_{\R^3}\big(A_i\cdot\nabla u_n-A_i\cdot\nabla u \big)\,\varphi\,\ud x\,\ud t\;&\rightarrow\;0\\
    \int_0^T\!\!\int_{\R^3}\big(A_i\cdot\nabla u_n-X_i \big)\,\varphi\,\ud x\,\ud t\;&\rightarrow\;0
 \end{split}
\]
for arbitrary $\varphi\in\mathcal{S}(\R\times\R^3)$,
whence the pointwise identity \eqref{mufy1}.
Let now $\Omega$ be an open and bounded subset of $\mathbb{R}^3$, and let $M\in[1,+\infty]$. Since, as seen in \eqref{eq:unuO}, $u_n|_\Omega$ converges to $u|_\Omega$ in $L^M([0,T],L^4(\Omega))$, then up to a subsequence one has also pointwise convergence, whence
\begin{eqnarray}
 A_i\cdot A_ju_n|_\Omega\!\!\!&\rightarrow&\!\!\! A_i\cdot A_ju|_\Omega \label{eq:cAAuF} \\
 |u_n|^{\gamma -1}u_n|_\Omega\!\!\!&\rightarrow&\!\!\! |u|^{\gamma -1}u|_\Omega \label{eq:cuuuF} \\
 \big(\,|\cdot|^{-\alpha}*|u_n|^2\big)u_n|_\Omega\!\!\!&\rightarrow&\!\!\! \big(\,|\cdot|^{-\alpha}*|u|^2\big)u|_\Omega \label{eq:calphauF}
\end{eqnarray}
pointwise for $t\in[0,T]$ and a.e.~$x\in\Omega$.
Therefore,
\eqref{mufy2}, \eqref{mufy3}, and \eqref{mufy4}
follow by the uniqueness of the pointwise limit and the arbitrariness of $\Omega$, combining, respectively, 
\eqref{eq:cAAu}, \eqref{eq:cuuu}, and \eqref{eq:calphau}
with, respectively,
\eqref{eq:cAAuF}, \eqref{eq:cuuuF}, and \eqref{eq:calphauF}.
\end{proof}

With the material collected so far we can complete the argument for the removal of the parabolic regularisation, locally in time.

\begin{proof}[Proof of Proposition \ref{pr:compa}]
We want to show that the function $u$ identified in Lemma \ref{le:aabb} is actually a local weak $H^1$-solution, in the sense of Definition \ref{de:ws_solution_alt} to the magnetic NLS \eqref{eq:magneticNLS} with initial datum $f$ in the time interval $[0,T]$. 
%Equivalently, we show that $u(0)=f$ and that $u$ satisfies \eqref{eq:magneticNLS} as an identity in $H^{-1}(\mathbb{R}^3)$. 
All the exponents  $p_i$, $p_{ij}$, $p(\gamma)$ and $\widetilde{p}(\alpha)$ identified in Corollary \ref{cor:cor} belong to the interval $[\frac{6}{5},2]$, and then by Sobolev's embedding the functions $X_i=A_i\cdot\nabla u$, $Y_{ij}=A_i\cdot A_ju$, $N_1=|u|^{\gamma -1}u$, and $N_2=(|\cdot|^{-\alpha}*u^2)u$ discussed in Corollary \ref{cor:cor} and Lemma \ref{pr:alme} all belong to $H^{-1}(\R^3)$, and so too does $\Delta u$, obviously. Therefore \eqref{eq:magneticNLS} is satisfied by $u$ as an identity between $H^{-1}$-functions. As a consequence, one can repeat the argument used to derive the estimate \eqref{uni:dersuR3}, whence $\partial_t u\in L^{\infty}([0,T],H^{-1}(\mathbb{R}^3))$. Thus, $u\in W^{1,\infty}([0,T],H^{-1}(\mathbb{R}^3))$. On the other hand $u_n\in C^1([0,T],H^{-1}(\mathbb{R}^3))$, and Lemma \ref{le:aabb} implies
\[
 \int_0^T\!\!\int_{\mathbb{R}^3}\eta(t,x)\big(u_n(t,x)-u(t,x) \big)\,\ud x\,\ud t\;\to\;0\qquad\forall \eta\in L^1([0,T],H^{-1}(\mathbb{R}^3)\,.
\]
For $\eta(t,x)=\delta(t-t_0,x)\varphi(x)$, where $t_0$ is arbitrary in $[0,T]$ and  $\varphi$ is arbitrary in $L^2(\mathbb{R}^3)$, the limit above reads $u_n(t_0,\cdot)\to u(t_0,\cdot)$ weakly in $L^2(\mathbb{R}^3)$, whence $u(0,\cdot)=f(\cdot)$.
\end{proof}

\subsection{Proof of the main Theorem}\label{sec:proof_of_main_Thm}

It is already evident at this stage that had we assumed the magnetic potential to be an $AC$-function for all times, then the proof of the existence of a global weak solution with finite energy would be completed with the proof of Proposition \ref{pr:compa} above, in full analogy with the scheme of the work \cite{GuoNakStr-1995} we mentioned in the Introduction.

Our potential being in general only $AC_{\mathrm{loc}}$ in time, we cannot appeal to bounds that are uniform in time (indeed, our \eqref{eq:unif_bound} and \eqref{eq:unif_bound_cri} are $T$-dependent), and the following straightforward strategy must be added in order to complete the proof of our main result.

\begin{proof}[Proof of Theorem \ref{th:main}]
We set $T=1$ and we choose an arbitrary sequence $(\eps_n)_n$ of positive numbers with $\eps_n\downarrow 0$. Let $u_n$ be the unique local strong $H^1$-solution to the regularised magnetic NLS \eqref{eq:visc_nls} with viscosity parameter $\eps=\eps_n$ and with initial datum $f\in H^1(\R^3)$. By Proposition \ref{pr:compa}, there exists a subsequence $(\eps_{n'})_{n'}$ of $(\eps_n)_n$ such that $u_{n'}\to u_1$ weakly-$*$ in $L^{\infty}([0,1],H^1(\R^3))$, where $u_1$ is a local weak $H^1$-solution to the magnetic NLS \eqref{eq:magneticNLS} with $u_1(0)=f$. If we take instead $T=2$ and repeat the argument, we find a subsequence $(\eps_{n''})_{n''}$ of $(\eps_{n'})_{n'}$ such that $u_{n''}\to u_2$ weakly-$*$ in $L^{\infty}([0,2],H^1(\R^3))$, where $u_2$ is a local weak $H^1$-solution to \eqref{eq:magneticNLS} with $u_2(0)=f$, now in the time interval $[0,2]$. Moreover, having refined the $u_{n'}$'s in order to obtain the $u_{n''}$'s, necessarily $u_2(t)=u_1(t)$ for $t\in[0,1]$. Iterating this process, we construct for any $N\in\N$ a function $u_N$ which is a local weak $H^1$-solution to \eqref{eq:magneticNLS} in the time interval $[0,N]$, with $u_N(0)=f$ and $u_N(t)=u_{N-1}(t)$ for $t\in[0,N-1]$. It remains to define 
$$u(t,x)\;:=\;u_{N}(t,x)\qquad x\in\mathbb{R}^3\,,\quad t\in[0,+\infty)\,\quad N=[t]\,.$$
Since $u_{N}\in L^{\infty}([0,N],H^1(\R^3))\cap W^{1,\infty}([0,N],H^{-1}(\R^3))$ for every $N\in N$, such $u$ turns out to be a global weak $H^1$-solution to \eqref{eq:CauMNLS} with finite energy for a.e. $t\in \R$, uniformly on compact time intervals.
\end{proof}

% 
%  \bibliographystyle{siam}
%  \bibliography{bib_ALE}

\def\cprime{$'$}

\end{document}